\providecommand {\norm}[1] {\lVert#1\rVert}
\providecommand {\bignorm}[1] {\Bigl\lVert#1\Bigr\rVert}
\providecommand {\abs}[1] {\lvert#1\rvert}
\providecommand {\bigabs}[1] {\Bigl\lvert#1\Bigr\rvert}
\providecommand {\inprod}[1]{\langle #1 \rangle}
\providecommand {\biginprod}[1]{\Big\langle #1 \Big\rangle}
\providecommand {\set}[1]{\lbrace #1 \rbrace}
\providecommand {\inv}[1]{{#1}^{-1}}
\newcommand {\one} e
\newcommand {\bn} {\ensuremath{\mathbb{N}}}
\newcommand {\br} {\ensuremath{\mathbb{R}}}
\newcommand {\brp}{\ensuremath{\mathbb{R}_0^+}}
\newcommand {\bz} {\ensuremath{\mathbb{Z}}}
\newcommand {\bc} {\ensuremath{\mathbb{C}}}
\newcommand {\mD} {\ensuremath{\mathcal{D}}}
\newcommand {\mF} {\ensuremath{\mathcal{F}}}
\newcommand {\mO} {\ensuremath{\mathcal{O}}}
\newcommand {\pp} {\ensuremath{\sqrt{p_+}}}
\newcommand {\pmi} {\ensuremath{\sqrt{p_-}}}
\newcommand {\Ltwo} {L^2(\br)}
\newcommand {\PW} [2] [{A_p}] {PW_{#2}(#1)}
\DeclareMathOperator{\supp}{supp}
\DeclareMathOperator{\spann}{span}
\DeclareMathOperator{\sinc}{sinc}
\newcommand \BL {bandlimited}
\newcommand \stft {short time Fourier transform}
\newcommand \cont {continuous}
\newcommand \saj {self-adjoint} 
\newcommand \SL {Sturm Liouville}
\newcommand \ndc {necessary density conditions}
\newtheorem{prop}{Proposition} [section]
\newtheorem{cor}[prop]{Corollary}
\newtheorem{thm}[prop]{Theorem}
\newtheorem{lem}[prop]{Lemma}
\theoremstyle{definition}
\newtheorem{defn}[prop]{Definition} 
\theoremstyle{remark}
\newtheorem*{rem}{Remark}
\newtheorem*{rems}{Remarks}
\numberwithin{equation}{section}
\newcommand{\up}{uncertainty principle}
\newcommand{\tfa}{time-frequency analysis}
\newcommand{\ft}{Fourier transform}
\newcommand{\tf}{time-frequency}
\newcommand{\fif}{if and only if}
\newcommand{\bdl}{band\-limited}
\newcommand{\bw}{bandwidth}
\begin{document}
\begin{abstract}
We propose a new notion of variable bandwidth that is based on the
 spectral subspaces of  an elliptic
operator $A_pf =  - \tfrac{d}{dx} (p(x) \tfrac{d}{dx})f$ where $p>0$ is a
strictly
positive function. Denote by  $c_{\Lambda}  (A_p)$ the orthogonal
projection of $A_p$ corresponding to the spectrum of $A_p$ in $\Lambda
\subset \br ^+$, the range of this projection is the space of
functions of variable \bw\ with spectral set in  $\Lambda $.

We will develop the basic theory of these function
spaces. First, we  derive (nonuniform) sampling theorems, second, we
prove  necessary density conditions in the style of Landau. Roughly,
for a  spectrum  $\Lambda = [0,\Omega] $    the main
results say   that, in a neighborhood of $x\in \br$,   a function of
variable \bw\  behaves like a bandlimited function with local
bandwidth $(\Omega / p(x))^{1/2}$.

Although  the formulation of
the results is deceptively  similar to the corresponding results for
classical bandlimited
functions, the methods of proof are much more
involved. On the one hand, we use the oscillation method from sampling
theory and frame
theoretic methods, on the other hand, we need the
precise spectral theory of Sturm-Liouville operators and the
scattering theory of one-dimensional Schr\"odinger operators.
\end{abstract}

\title[Variable Bandwidth]{What is Variable Bandwidth?}


\author{Karlheinz Gr\"ochenig},
\author{ Andreas Klotz}
\address{Faculty of Mathematics \\
University of Vienna \\
Oskar-Morgenstern-Platz 1 \\
A-1090 Vienna, Austria}
\email{karlheinz.groechenig@univie.ac.at}
\email{andreas.klotz@univie.ac.at}

\subjclass[2010]{}
\date{\today}
\keywords{Paley-Wiener space, bandwidth, reproducing kernel Hilbert
  space, sampling, density condition, Sturm-Liouville theory, Schr\"odinger operator,
  spectral theory}
\thanks{This work  was 
  supported  by the  project P26273 - N25  of the
Austrian Science Fund (FWF)}
\maketitle
\section{Introduction}


 A function   $f\in L^2(\br )$  has \bw\ $\Omega >0$, if its
Fourier transform $\hat{f} (\xi )  = \int _{\br} f(x) e^{-ix\xi } \,
dx $ vanishes outside the interval $[-\Omega , \Omega ]$. The number
$\Omega  $ is the maximal frequency contributing to $f$ and is called
the \bw\ of $f$. According to Shannon
the \bw\ is an information-theoretic quantity and determines  how
many samples of a function $f$ are required to determine $f$
completely. Alternatively, the \bw\  indicates how much information
can be transmitted through a communication channel.

 In the context of \tfa\ it is perfectly plausible to 
assign different   local \bw s to different  segments of a
signal. This becomes even more obvious in the often cited
metaphor of music: the highest frequency of musical piece is
time-varying. However, a rigorous definition of variable \bw\ is
difficult, perhaps even elusive, because \bw\ is global by definition
and the assignment of a local \bw\ is in contradiction with the \up . 

\vspace{2 mm}

\begin{center}
 So what is a function of variable \bw ?  
\end{center}

\vspace{2 mm}



Before attempting to give a precise definition, we need to single out the
distinctive features of \bdl\ functions. In our view the essence of
\bw\ is encapsulated in three fundamental theorems about \bdl\
functions: 
\begin{enumerate}
\item the Shannon-Whittaker-Kotelnikov sampling theorem and its variations,
\item the existence of a critical density in the style of
  Landau's necessary density conditions (a Nyquist rate in engineering
  terms), and
\item some inherent analyticity, expressed by a Bernstein-type
  inequality and theorems in the style of Paley-Wiener. 
\end{enumerate}

Our objective is to introduce a concept of variable \bw\ that shares
these fundamental properties (sampling theorems and density results)
with classical \bdl\ functions. Our starting point is the well known
observation that \bdl\ functions are contained in a spectral subspace 
of the 
differential operator $-D^2 = -\tfrac{d^2}{dx^2}$. 
It is diagonalized by the
Fourier transform  $\mF $ and $\mF   D^2  \mF ^{-1} f =  \xi ^2
\hat{f}(\xi ) $ is simply the  operator of multiplication by 
$ \xi ^2$. For fixed  $\Omega >0$ the spectral subspace
corresponding to the spectral values $0\leq  \xi ^2 \leq \Omega $
is defined by  the spectral projection
$c_{[0,\Omega ]} (-D^2)$, which is given by $\hat{f} \mapsto
c_{[0,\Omega ]} ( \xi ^2) \hat{f}(\xi )$ in the Fourier
domain. Consequently the spectral subspace $c_{[0,\Omega ]} (-
D^2)L^2(\br )$ 
is identical with the \bdl\ functions of \bw\ $
\Omega  ^{1/2}$. 

Our idea is to  replace the constant coefficient differential operator
$- D^2$ by an elliptic
differential operator in divergence form  
\begin{equation}
  \label{eq:c1}
  A_p = - D (p (x) D) \, , 
\end{equation} 
with a \emph{\bw -parametrizing function} $p>0$. As above, a  space of variable
\bw\ is given by a spectral subspace of the differential operator $A_p$.
By imposing mild assumptions on $p$ and  choosing  a suitable domain,
$A_p$ becomes a positive,  unbounded, 
self-adjoint operator on $L^2(\br)$. Its  spectral
representation enables us to   make the following definition.

\begin{defn} \label{def1}
Let $\Lambda \subseteq \br ^+$ be a fixed  Borel set with finite Lebesgue measure.   A
function is $A_p$-\bdl\ 
with spectral set   $\Lambda $, if $f\in
  c_{\Lambda } (A_p) L^2(\br) $. The range of the spectral projection $
  c_{\Lambda } (A_p) L^2(\br) $ is called the  Paley-Wiener space with respect to
  $A_p$ and  spectral set  $\Lambda $ and will be 
  denoted by $PW_\Lambda (A_p)$.  The quantity 
$\Omega = \max \{ \lambda \in \Lambda \}$ is the \bw\ of $PW_\Lambda
(A_p)$.  If $\Lambda = [0,\Omega ]$, we will often speak of functions
of variable \bw\ $\Omega $. 
\end{defn}

If $p \equiv 1$ and $A_p = -\tfrac{d^2}{dx^2}$, then, as argued above,
$PW_{[0,\Omega
  ]}(A_p)$ consists exactly of the classical \bdl\ functions with
\bw\ ${\Omega }^{1/2}$. 


 Our challenge is to convince the reader that Definition~\ref{def1}
 is indeed a meaningful  notion of variable \bw . 
 We must 
 interpret functions in $PW_\Lambda (A_p)$  as functions of variable
 \bw\ and need to relate the parametrizing function $p$ to a local \bw
 . Furthermore, we need to establish sampling and density theorems for
 $PW_\Lambda (A_p)$ that depend  to the  \bw -parametrizing
 function $p$.   
As a guideline,  we would
 expect that $1/\sqrt{p(x)}$ determines the local \bw\ in a
 neighborhood of $x$ and will enter the formulation of the basic
 results. 

First, we show that functions of variable \bw\ admit sampling theorems.

\begin{thm}[Sampling theorem for $PW_\Lambda (A_p)$] \label{tm1}
Fix $\Lambda \subseteq \br ^+$ compact  and set $\Omega = \max
 \Lambda $.   Assume that $0< c \leq p(x) $ for all $x
  \in \br $.  Let $X = (x_i) _{i\in \bz }$
  be an increasing sequence with $\lim _{i \to \pm \infty } x_i = \pm
  \infty $ and $\inf _i (x_{i+1} - x_i ) > 0$. If
  \begin{equation}
    \label{eq:c2}
    \delta = \sup _{i\in \bz} \frac{x_{i+1}-x_i}{\inf _{x\in
        [x_i,x_{i+1}]} \sqrt{p(x)}} < \frac{\pi }{{{\Omega }^{1/2}}} \, ,
  \end{equation}
then there exist $A,B>0$ such that,   for all $ f \in PW_\Lambda
(A_p)$,  
\begin{equation}
  \label{eq:c3}
  A \|f \|_2^2 \leq \sum _{i\in \bz} |f(x_i)|^2 \leq B \|f\|_2^2 \qquad
\end{equation}
\end{thm}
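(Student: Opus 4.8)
The plan is to follow the classical oscillation method from nonuniform sampling theory (in the spirit of Gröchenig's work on irregular sampling of bandlimited functions), but with the ordinary Paley-Wiener space replaced by $PW_\Lambda(A_p)$, and with the key analytic input — a Bernstein-type inequality — supplied by the spectral calculus of $A_p$ rather than by the Fourier transform. The upper bound in \eqref{eq:c3} is the easy half: since $\inf_i(x_{i+1}-x_i)>0$, the sequence $X$ is separated, so it suffices to show that functions in $PW_\Lambda(A_p)$ are uniformly bounded and that $\sum_i |f(x_i)|^2 \ls \|f\|_2^2$. This follows from a local estimate $|f(x)|^2 \ls \int_{x-h}^{x+h} (|f|^2 + |f'|^2)\,dt$ together with control of $\|f'\|_2$ in terms of $\|f\|_2$: because $f\in PW_\Lambda(A_p)$ and $\Lambda$ is compact with $\Omega=\max\Lambda$, the spectral theorem gives $\langle A_p f, f\rangle = \int_\Lambda \lambda\, d\|E_\lambda f\|^2 \le \Omega \|f\|_2^2$, and $\langle A_p f,f\rangle = \int p |f'|^2$, so with $p\ge c$ we get $\|f'\|_2^2 \le c^{-1}\Omega\|f\|_2^2$. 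Summing the local estimate over the separated set $X$ (with overlap bounded by the separation constant) yields $B$.

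The lower bound is the substantive part. The strategy is the standard one: estimate, for $f\in PW_\Lambda(A_p)$, the difference between the ``continuous'' quantity $\|f\|_2^2$ (or a suitable quadrature sum) and the sampling sum $\sum_i |f(x_i)|^2$, and show it is a fraction of $\|f\|_2^2$ once $\delta$ is small enough. Concretely, one compares $\int_{x_i}^{x_{i+1}} |f(t)|^2\,dt$ with $(x_{i+1}-x_i)|f(x_i)|^2$, using $|f(t)-f(x_i)| \le \int_{x_i}^{t} |f'(s)|\,ds$. After squaring, applying Cauchy-Schwarz, and summing over $i$, the error is bounded by a constant times $\big(\sup_i (x_{i+1}-x_i)^2\big)\int |f'|^2$ — but to exploit the factor $p(x)$ in the denominator of \eqref{eq:c2}, one must weight the estimate locally: on the interval $[x_i,x_{i+1}]$ write $|f'(s)|^2 = p(s)|f'(s)|^2 \cdot p(s)^{-1} \le \big(\inf_{[x_i,x_{i+1}]}\sqrt p\big)^{-2} p(s)|f'(s)|^2$, so that the total error is controlled by $\delta^2 \int p|f'|^2 = \delta^2 \langle A_p f, f\rangle \le \delta^2 \Omega \|f\|_2^2$. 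The hypothesis $\delta \Omega^{1/2} < \pi$ then gives a contraction constant $< 1$ (the constant $\pi$ comes from a Wirtinger/Poincaré inequality on the subintervals, which is the sharp form of this comparison), and a Neumann-series / perturbation argument converts the near-isometry into the two-sided frame inequality with explicit $A$.

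I expect the main obstacle to be making the local weighting rigorous and tight enough that the constant $\pi$ in \eqref{eq:c2} actually emerges, rather than some non-sharp constant. The naive Cauchy-Schwarz comparison above loses a factor and gives a threshold like $\delta\Omega^{1/2} < 1$; recovering $\pi$ requires replacing it by the optimal Wirtinger inequality $\int_0^L |g(t)-g(0)|^2\,dt \le (L/\pi)^2 \int_0^L |g'(t)|^2\,dt$ applied on each $[x_i,x_{i+1}]$ (or a symmetrized version around a well-chosen point), and then carefully reassembling these local pieces — the weight $p$ varies across each subinterval, so one pays an extra factor that must be absorbed into the definition of $\delta$ exactly as written. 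A secondary technical point is justifying the identity $\langle A_p f,f\rangle = \int p|f'|^2$ and the manipulations $f\in PW_\Lambda(A_p)\Rightarrow f\in $ (operator core, $f'$ absolutely continuous, etc.) — this rests on the spectral theory of the Sturm-Liouville operator $A_p$ and the description of its form domain, which I would quote from the setup established earlier.
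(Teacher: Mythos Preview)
Your proposal is correct and follows essentially the same route as the paper: the key inputs are Wirtinger's inequality on subintervals, the local weighting trick $|f'(s)|^2 \le \big(\inf_{[x_i,x_{i+1}]}\sqrt{p}\big)^{-2} p(s)|f'(s)|^2$, and the Bernstein inequality $\langle A_p f,f\rangle = \int p|f'|^2 \le \Omega\|f\|^2$ from the spectral calculus, combined via a perturbation argument. The paper packages the oscillation estimate as $\|f - \sum_i f(x_i)\chi_i\|^2 \le (\delta^2\Omega/\pi^2)\|f\|^2$, where $\chi_i$ is the indicator of the interval $[y_{i-1},y_i)$ between consecutive \emph{midpoints} $y_i = (x_i+x_{i+1})/2$; this midpoint partition is exactly the ``symmetrized version around a well-chosen point'' you allude to, and is what recovers the sharp constant $\pi$ (the one-sided Wirtinger inequality on $[a,b]$ with a single vanishing endpoint carries the constant $4(b-a)^2/\pi^2$, so halving the interval length is needed).
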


Since $PW_\Lambda (A_p)$ is a reproducing kernel Hilbert space, the
sampling inequality~\eqref{eq:c3} implies a variety of reconstruction
algorithms. Following~\cite{Groch92}  we will formulate an iterative  algorithm for the
reconstruction of $f\in PW_\Lambda (A_p)$ from the samples $\set{f(x_i) \colon 
i \in \bz}$ with geometric convergence.

 Theorem~\ref{tm1} supports our interpretation that $p(x)^{-1/2}$ is a
 measure for the local \bw . If $p$ is constant on an interval $I$,
 $p|_I = p_0$, then the maximum gap condition~\eqref{eq:c2} reads as
 $x_{i+1}-x_i \leq \delta \sqrt{p_0} < \pi (p_0 / \Omega )^{1/2}$ 
 for $x_i \in I$. This is precisely the
 sufficient condition on the maximal gap  that arises for \bdl\
 functions with \bw\ $(\Omega / p_0)^{1/2}$. In other words, $f\in PW
 _{[0,\Omega]}(A_p)$ behaves like a  $(\Omega / p_0)^{1/2}$-\bdl\ function on $I$.

We remark that condition~\eqref{eq:c2} is almost optimal;  the constant
$\frac{\pi }{{\Omega ^{1/2}}}$ in \eqref{eq:c2} cannot be improved. However, a
weaker, qualitative version of this sampling theorem with a
sufficiently small $\delta $ in \eqref{eq:c2} can be derived from
Pesenson's theory of abstract  \bw~\cite{Pes01,Pesenson09}. 



Our second main result is a necessary density condition for sampling
in the style of Landau~\cite{Landau67, Landau67a}.  
For the formulation we need an adaptation of
the Beurling density to variable \bw . As in \eqref{eq:c2} we impose a
new measure or distance on $\br $ determined by the \bw\
parametrization $p$, namely
$ \mu_p(I)=\int_I p^{-1/2}(u) \, du$ 
and define the Beurling density of a set $X \subseteq \br$ as
\begin{equation*}
   D_p^-(X)= \varliminf_{r \to \infty}  \inf_{\mu_p (I) =r} \frac{\set{\# (X \cap I) \colon  I \subset \br \text{ closed interval } } }{r} \,.
\end{equation*} 
We write $\Lambda ^{1/2} = \{ \omega \in \br ^+ : \omega ^2 \in
\Lambda \}$ for the square root of a set and $|\Lambda ^{1/2}|$ for
its Lebesgue measure. 

\begin{thm} \label{tm2}
  Assume that  $p\in C^2$ and $p$ is eventually constant, i.e.,
  for some $R>0$ we have $p(x) = p_-$ for $x\leq -R$ and $p(x) = p_+$
  for $x\geq R$.  Fix $\Lambda  \subseteq \br
  ^+$ with finite (Lebesgue) measure. 
If $X\subseteq \br $ is a separated set such that the sampling
inequality
$$A \|f \|_2^2 \leq \sum _{i\in I} |f(x_i)|^2 \leq B \|f\|_2^2 
$$
holds  for all  $f \in PW_\Lambda  (A_p)$, then 
$D_p^- (X) \geq \frac{|\Lambda^{1/2}|}{\pi}$. 
\end{thm}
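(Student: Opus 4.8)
The plan is to run Landau's argument for necessary sampling densities, with the Fourier transform replaced by the generalized eigenfunction (Weyl--Titchmarsh) transform of the Sturm--Liouville operator $A_p$. Write $P_\Lambda=c_\Lambda(A_p)$ and, for a bounded interval $I\subseteq\br$, let $M_I$ be multiplication by $\chi_I$ and $Q_I=P_\Lambda M_I P_\Lambda$ the associated concentration operator on $PW_\Lambda(A_p)$. Since $PW_\Lambda(A_p)$ is a reproducing kernel Hilbert space with kernel $K_\Lambda$, the operator $Q_I$ is positive, trace class, and $\operatorname{tr}Q_I=\int_I K_\Lambda(x,x)\,dx$. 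The argument splits into an \emph{abstract comparison step}, proving that a separated set of sampling contains, in every interval $I$, at least $\operatorname{tr}Q_I$ points up to a lower-order error, and a \emph{spectral computation}, proving
\[
\operatorname{tr}Q_I=\frac{|\Lambda^{1/2}|}{\pi}\,\mu_p(I)+o\bigl(\mu_p(I)\bigr),\qquad \mu_p(I)\to\infty,
\]
uniformly in the position of $I$. Dividing by $\mu_p(I)=r$ and taking first the infimum over intervals and then $\varliminf_{r\to\infty}$ yields $D_p^-(X)\ge|\Lambda^{1/2}|/\pi$; the fixed enlargement of $I$ needed in the comparison step changes $\mu_p$ only by $O(1)$ because, under the hypotheses, $p$ is bounded above and below, so it disappears in the $\varliminf$.

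For the comparison step I would use the reproducing kernel version of Landau's method (as in Landau's original proof and the subsequent work of Ramanathan--Steger and Gr\"ochenig--Razafinjatovo): since $\{K_\Lambda(\cdot,x_i)\}_i$ is a frame for $PW_\Lambda(A_p)$ and $K_\Lambda$ is well localized in the metric induced by $\mu_p$, one first establishes a homogeneous approximation property (sample values far from a point, measured in $\mu_p$, carry negligible energy) and then compares dimensions. If $N_\delta(I)$ is the number of eigenvalues of $Q_I$ exceeding $\delta$, a set of sampling forces $\#(X\cap I')\ge N_\delta(I)$ for a fixed enlargement $I'$ of $I$, while the ``plunge region'' bound $\#\{\text{eigenvalues of }Q_I\text{ in }(\delta,1-\delta)\}=o(\mu_p(I))$ gives $N_\delta(I)=\operatorname{tr}Q_I-o(\mu_p(I))$. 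Both the localization of $K_\Lambda$ and the plunge estimate rest on the fine spectral theory of $A_p$ supplied below.

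For the spectral computation I would pass to a Schr\"odinger operator by the Liouville transformation. Set $\phi(x)=\int_0^x p(u)^{-1/2}\,du$, a $C^3$-diffeomorphism of $\br$ because $p\in C^2$ is strictly positive, and let $U\colon L^2(\br,dx)\to L^2(\br,dy)$, $(Uf)(y)=p(x(y))^{1/4}f(x(y))$, be the induced unitary. A classical computation gives $UA_pU^{-1}=H:=-\tfrac{d^2}{dy^2}+q$ with $q$ an explicit function of $p,p',p''$; since $p$ is eventually constant $q$ has compact support, and since $A_p\ge0$ the operator $H$ has no positive eigenvalues, so $\spec(H)=\spec_{\mathrm{ac}}(H)=[0,\infty)$. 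Because $Uc_\Lambda(A_p)U^{-1}=c_\Lambda(H)$ and $K_\Lambda^{A_p}(x,x')=p(x)^{-1/4}p(x')^{-1/4}K_\Lambda^H(\phi(x),\phi(x'))$, the substitution $y=\phi(x)$ gives $\int_I K_\Lambda^{A_p}(x,x)\,dx=\int_{\phi(I)}K_\Lambda^H(y,y)\,dy$ with $|\phi(I)|=\mu_p(I)$. Now invoke the generalized eigenfunction expansion of $H$: if $\psi_1(\cdot,k),\psi_2(\cdot,k)$ are the left- and right-incident scattering solutions at energy $k^2$, then
\[
K_\Lambda^H(y,y')=\frac{1}{2\pi}\int_{\Lambda^{1/2}}\bigl(\psi_1(y,k)\overline{\psi_1(y',k)}+\psi_2(y,k)\overline{\psi_2(y',k)}\bigr)\,dk,
\]
and for $y$ outside $\supp q$ unitarity of the scattering matrix, $|T(k)|^2+|R_\pm(k)|^2=1$, yields $|\psi_1(y,k)|^2+|\psi_2(y,k)|^2=2+2\operatorname{Re}\bigl(\overline{R_\pm(k)}e^{\pm2iky}\bigr)$. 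Integrating over an interval $J$ with $|J|\to\infty$, the constant term contributes $\tfrac{1}{\pi}|\Lambda^{1/2}|\,|J|$, the oscillatory cross terms contribute $O(\log|J|)$ after splitting the $k$-integral at $|k|\sim1/|J|$ (using $|\Lambda^{1/2}|\le2\sqrt{|\Lambda|}<\infty$), and the bounded set $J\cap\supp q$ contributes $O(1)$; this is the asymptotics claimed above.

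The main obstacle is the plunge-region bound for $Q_I$ (together with the homogeneous approximation property) in the variable-bandwidth setting: one must show that replacing $-d^2/dy^2$ by $-d^2/dy^2+q$ with compactly supported $q$ perturbs the Landau--Widom eigenvalue asymptotics of the free concentration operator only by a lower-order amount, which requires the precise decay and normalization of the scattering/Jost solutions and is the point where the scattering theory of one-dimensional Schr\"odinger operators is genuinely needed. By contrast, once the generalized eigenfunction expansion and the mapping properties of the Liouville transform are in place, the trace asymptotics of the third paragraph are comparatively routine.
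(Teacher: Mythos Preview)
Your spectral computation---the Liouville transform to a Schr\"odinger operator with compactly supported potential, the scattering expansion of the kernel, and the trace asymptotics $\int_I K_\Lambda(y,y)\,dy=\tfrac{|\Lambda^{1/2}|}{\pi}|I|+o(|I|)$---is essentially the paper's own route (their Section~6.2 and Lemma~6.8), with only cosmetic differences in the error bound (the paper gets $O(|I|^{1/2})$ via Cauchy--Schwarz rather than your $O(\log|I|)$).

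The comparison step, however, is genuinely different. You propose Landau's concentration-operator approach: count eigenvalues $N_\delta(I)$ of $Q_I=P_\Lambda M_I P_\Lambda$, bound the plunge region $\operatorname{tr}(Q_I-Q_I^2)=o(|I|)$, and use HAP to force $\#(X\cap I')\ge N_\delta(I)$. The paper instead follows Nitzan--Olevskii: it writes $k(y,y)=\sum_{x\in X}k(x,y)\overline{g_x(y)}$ via the canonical dual frame, integrates over $I$, and splits the sum into near/far pieces controlled by a weak localization lemma (their Lemma~6.6) and a discrete HAP (Lemma~6.7). This bypasses eigenvalue counting entirely. Your approach would work---in fact the plunge estimate you flag as the ``main obstacle'' is not as hard as you suggest: since $\operatorname{tr}(Q_I-Q_I^2)=\int_I\int_{\br\setminus I}|k(x,y)|^2\,dy\,dx$, it follows directly from the same weak localization $\sup_x\int_{|y-x|>b_\epsilon}|k(x,y)|^2\,dy<\epsilon^2$ and the boundedness of $k(y,y)$ that the paper establishes from the scattering data. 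So both routes ultimately rest on the same kernel estimate; the Nitzan--Olevskii argument is just more direct. One point you omit: the paper's HAP lemma requires $\Lambda$ bounded, so the theorem is first proved for bounded $\Lambda$ and then extended to $|\Lambda|<\infty$ by passing to $\Lambda\cap[0,\Omega]$ and letting $\Omega\to\infty$; your sketch should include this reduction.
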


Theorem~\ref{tm2} is again consistent with our interpretation of
$PW_\Lambda  (A_p)$ as a space of functions with variable \bw . If
$\Lambda = [0,\Omega ]$ and $p$
is constant on an interval $I$, 
$p|_I = p_0$, then $\mu _p(I) = |I|/\sqrt{p_0}$ and we obtain roughly 
$$
\#
(X\cap I) \geq \frac{|\Lambda  ^{1/2}| |I|}{\pi \sqrt{p_0}} =
\Big(\frac{\Omega }{p_0} \Big)^{1/2} \frac{|I|}{\pi} \, .
$$
Comparing with Landau's classical result for \bdl\ function, this  is 
exactly the minimum number of samples in $I$ required for a \bdl\ function
with \bw\  $(\Omega /p_0)^{1/2}$. Again,  $f\in PW
 _{[0,\Omega]}(A_p)$ behaves like a  $(\Omega / p_0)^{1/2} $-\bdl\
 function on $I$. 


 The two main theorems  demonstrate convincingly  that
the spectral subspaces $PW_\Lambda (A_p)$ are indeed appropriate  models for
spaces of functions with variable \bw .  The values $p(x)^{-1/2}$ 
 may be taken  as a  measure for the local \bw\ and
enter significantly in the formulation of sampling and density
theorems for these spaces. 

The inherent analycity properties of functions of variable \bw\  (item
(iii) on our wishlist) follow from the theory of abstract
\bw\ \cite{Pes01,Pesenson09} and will be discussed in Section~3. 

\vspace{3mm}

\textbf{Methods.} The formulation of the main theorems looks like a small
variation of the standard theorems for classical \bdl\ functions 
 with the parametrizing function $p$ appearing in the appropriate places.  The
proofs  of the above theorems, however,   require input from
two areas, namely the  applied harmonic analysis of sampling  theory
and the detailed spectral analysis of Sturm-Liouville operators and
Schr\"odinger operators. The methodical input from  sampling theory is
the 
oscillation method from~\cite{Groch92} for the proof of
Theorem~\ref{tm1}, whereas  the proof of
Theorem~\ref{tm2} follows the outline of Nitzan and Olevski~\cite{Nitzan12} 
in which a (discrete) frame of reproducing kernels is compared to a
continuous resolution of the identity. 
The second methodical input  is from  the theory of Sturm-Liouville problems and of
(one-dimensional) Schr\"odinger operators. 
To see why we need the extensive build-up of
Sturm-Liouville theory, we recall that  much of the theory
of classical \bdl\ functions is based on the \ft\ and the explicit
formula for the reproducing kernel $k(x,y) = \tfrac{\sin (x-y)}{x-y}$ of the
standard Paley-Wiener space. Our
main effort is devoted to finding appropriate substitutes for
these explicit expressions. On the one hand, we find these in the
spectral theory of Sturm-Liouville operators. The detailed analysis of
the spectral measure of $A_p$ yields a representation of functions in
$PW_\Lambda (A_p)$ as
$$
f(x) = \int _\Lambda  F(\lambda ) \cdot \Phi (\lambda ,x) \, d\rho (\lambda
) \, ,
$$
where $\Phi (\lambda ,x) = \big( \Phi _1(\lambda , x), \Phi _2(\lambda
, x)\big)^T $ is a set of fundamental solutions of $-(p \Phi ' )'  = \lambda
\Phi $, $\rho$ is the $2\times 2$-matrix-valued spectral measure, and
$F \in L^2(\Lambda ,  d\rho)$. Though not as explicit as the \ft , this
spectral representation of functions of variable \bw\ will enable us
to derive the essential properties of $PW_\Lambda (A_p)$. 

For the proof of the  density theorem we will switch to an  equivalent
Schr\"odinger equation. By applying a Liouville transform, the
differential operator $-Dp(x) D$ is unitarily equivalent to a
one-dimensional Schr\"odinger operator $B_p = -D^2 + q(x)$ where $q$
is an explicit expression depending on the \bw\ parametrization $p$
(see~\eqref{eq:63} for the precise formula). The advantage of the
Schr\"odinger picture is that we can apply the scattering theory of
the Schr\"odinger equation to obtain asymptotic estimates for the
reproducing kernel of $PW_\Lambda (B_q)$. To appreciate the
transition to the Schr\"odinger picture, the reader should  at least
check  the  remark after the proof of Lemma~\ref{lem:conthap}. ~\ref{rem:alert}.

The additional  insight emerging
from this approach is that sampling sets for $PW_\Lambda (A_p)$ are
obtained from sampling sets for the  Paley-Wiener-space of the Schr\"o\-dinger operator $PW_\Lambda (B_q)$ (which is defined verbatim as in Definition~\ref{def1}) by means of
time-warping with the Liouville transform. Note, however, that the
concrete interpretation of variable \bw\ is lost in  the
Schr\"odinger picture. 

Let us emphasize that at this time we want to focus on the
proof of concept and to convince the reader that Definition~\ref{def1}
yields a meaningful and mathematically interesting notion of variable
\bw .  Therefore we develop the theory of variable \bw\ under
rather benign assumptions on the \bw\ parametrization $p$. It is clear
that on a technical level our results can and should be pushed much further  by
using more advanced aspects of 
Sturm-Liouville theory. Indeed, once the basic theory is established,
we face a host of new, interesting, and non-trivial  questions in
sampling theory. See the last section for a look at ongoing work. 

As the paper should be accessible for two different communities
(applied harmonic analysis and spectral analysis),  we  have tried to
work on a moderate technical level. 
This means in particular that we feel the
need to summarize some  well-known parts of the spectral theory to harmonic
analysts. 

\vspace{3mm}

\textbf{Related work and other notions of variable bandwidth.}  In the
literature one finds several approaches to variable \bw .

1. \emph{Time-warping} ~\cite{Clark85,Horiuchi68,Porat98,Shavelis12,Wei07}:
Given a homeomorphism $\gamma :\br \to \br$ (a warping function), a function $f$  possesses
variable bandwidth with respect to $\gamma$, if $f = g \circ \gamma$
for a bandlimited function $g\in L^2(\br )$ with $\supp \hat{g} \subseteq [-\Omega
, \Omega]$. The derivative $1/\gamma '(\gamma ^{-1}(x))$ of the
warping function is interpreted as the local bandwidth of $f$.
Although the sampling theory for time-warped functions is simple,
time-warping is relevant in signal processing, and the  estimate of  suitable warping functions for given
data is an important problem. Let us mention that time-warping
functions can be understood as spectral subspaces of certain
differential operators of order one (see Appendix~B).

2. \emph{Aceska and Feichtinger}~\cite{Aceska11,Aceska12} have proposed a concept  of variable
\bw\  based on  \tf\ methods, namely the truncation of the \stft\ by
means of a time-varying frequency cut-off. The resulting function
spaces, however,  coincide with  the standard Bessel-Sobolev potential
spaces endowed with an equivalent norm. Since  these spaces  do not admit a
 sampling theorem (nor  a Nyquist density), they are not
 variable \bw\ spaces in our sense.  The 
spaces of ~\cite{Aceska11,Aceska12} are rather spaces of locally variable smoothness.

3. \emph{Kempf and his collaborators}~\cite{Kempf00,Kempf08,Kempf10} use a
procedural concept of variable \bw , but (at least in the available
literature) shy away from a formal definition. 
The parametrization of self-adjoint extensions of a differential
operator leads to a class of algorithms that reconstruct or
interpolate a function  from certain
samples. The reconstructed function is said to have variable \bw .

4. \emph{Abstract Paley-Wiener spaces}~\cite{Pes01,Pesenson09,Zayed08}:
Perhaps closest to our approach is the  work
of Pesenson and Zayed on abstract bandlimitedness. Given an unbounded,  self-adjoint
operator on a Hilbert space $H$, the spectral subspaces $c_\Lambda
(A)H$ are considered abstract spaces of \bdl\ vectors. If $A$ is the
Laplace-Beltrami operator on a manifold, then  the corresponding
Paley-Wiener spaces are concrete function spaces and admit
sampling theorems.  These results are merely qualitative and so far  are not
backed up by corresponding density results.   Paley-Wiener spaces on
manifolds play an important role in the construction and analysis of
Besov spaces on various
manifolds. See~\cite{FilM11,Pes09,petrushev14,petrushev15} for
this direction of research. 


5. \emph{Sampling theory associated with Sturm-Liouville
  problems}~\cite{Zayed91}:
 The  generalizations of Kramer's sampling
theorem with Sturm-Liouville theory aim at interpolation formulas and
sampling theorems analogous to the cardinal series. However, the
samples are taken on the spectral side  and the conditions imposed on
the Sturm-Liouville operator guarantee a discrete spectrum, in
contrast to our set-up. 
Except for the use of Sturm-Liouville theory, we do not see any  connection
to our work.

\vspace{ 3mm}

\textbf{Organization.} The paper is organized as follows:  
In Section~\ref{specth}  we review the relevant aspects of the spectral theory of
Sturm-Liouville operators. In  Section~3 we collect  the 
basic properties of  functions of variable \bw .  
 Section \ref{sec:model-case} treats the toy
example of the discontinuous  parametrizing function $p(x) = p_-$ for $x\leq 0$
and $p(x) = p_+$ for $x>0$ and $p_- \neq p_+$. We will show that the  corresponding
Paley-Wiener space consists of functions with different  bandwidths
$1/\sqrt{p_{\pm}}$ on the positive and negative half axis and then   prove a
Shannon-like sampling theorem (Thm.~\ref{thm:shanlike}).  This example is
instructive because all objects (spectral measure, reproducing kernel)
can be computed  explicitly.  
In Section \ref{sec:irregular-sampling-} we prove Theorem~\ref{tm1}
and  treat the stable reconstruction of a function of variable \bw\
from a sampling sequence. The approach is based on  \cite{Groch92,
  grirrsstft}. 
In Section~\ref{sec:landau} we define a Beurling density adapted to
the sampling geometry of the differential operator and  prove
 necessary density conditions for stable sampling and
interpolation in the style of Landau. The proof follows the outline of
~\cite{Nitzan12}, the main technical work is to control the
reproducing kernels and its oscillations, which is done in
Section~7. 

The appendices contain our remarks on time-warping and  the explicit calculations needed for
Section~4. Such material is usually left to the interested reader, but
we prefer to include it, because we have struggled several times to
reproduce our own calculations. 


\vspace{ 3mm}

\textbf{Acknowledgment.} We would like to express our special  thanks
to Gerald Teschl for
many discussions and sharing his knowledge on Sturm-Liouville and
spectral theory.

\section{Spectral Theory of some second order differential operators}
\label{specth}
In this section we provide the necessary  background from the spectral
theory of second order differential operators, in particular, of 
Sturm-Liouville operators. Our standard references for the material
are \cite{Teschl09,Weidmann87,Weidmann03} and  also \cite{DS2}. 

We consider differential expressions of Sturm-Liouville type
\begin{equation}
  \label{eq:34}
  \tau f = - (pf')'+ q f
\end{equation}
where  $p,q$ are measurable functions satisfying 
\begin{equation}\label{eq:59}
 p > 0 \text{ a.e., and } 1/p, q \in L^1_{loc}(\br) \, .
 \end{equation}
 The corresponding \emph{maximal operator} $A$ is  defined by the
 choice of the domain 
\begin{equation}
\label{eq:domain}
  \begin{split}
    \mD(A) &=\set{f \in L^2(\br) \colon f, pf' \in AC_{loc},\tau f \in L^2(\br)} \\
    A f &= \tau f \quad \text{ for } \,  f \in \mD(A_p) \,.
  \end{split}
\end{equation}
{We will always impose the conditions \eqref{eq:59} and \eqref{eq:domain}  without further notice.}\\
For the study of variable \bw\ we will  focus  on differential expressions in divergence form,
\begin{equation}
  \label{eq:35}
  \tau_p f =-(pf')' \; 
\end{equation}
with  the corresponding maximal operator $A_p$. For the density
theorem  we will also  consider  Schr\"odinger operators
\begin{equation}
  \label{eq:36}
  \tilde\tau_q f =-f''+ q f \,
\end{equation}
with corresponding maximal operator $B_q$. Under mild  assumptions
$A_p$ and $B_q$ are unitarily equivalent via the Liouville transform, see Section~\ref{sec:transf-schr-form-1}.
\\ \\
To cite the necessary results from the  literature we need  some more  terminology and
notation. 

A solution $\phi$ of $(\tau-z)\phi=0$, $ z\in \bc$,
 \emph{lies left}  in $L^2(\br)$, if $\phi \in
 L^2((-\infty,c))$ for some  $c \in \br$, and lies right if  $\phi \in L^2((c,\infty))$
 for some  $c \in \br$. If for every $z \notin \br$ there are two
 unique (up to a multiplicative constant) solutions  of
 $(\tau-z)\phi=0$ that lie left respectively right in $L^2(\br)$
 (jargon: $\tau$ is  \emph{limit point} (LP) at $\pm \infty$) , then
 the corresponding maximal operator $A$ is \saj\ \cite[13.18,
 13.19]{Weidmann03}. This is the only situation we treat in this
 text. 
\\

We cite  a simple sufficient condition on $p$ such that the maximal operator $A_p$ corresponding to $\tau_p$ is \saj.

\begin{prop} \label{prop:saj}
Let $P(x) = \int_0^x \inv{ p(u)} du$. If $P\notin L^2(\br^+)$ and
$P\notin L^2(\br^-)$, then  $A_p$ is \saj .
\end{prop}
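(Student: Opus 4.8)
The plan is to reduce the claim to the limit point/limit circle classification of Sturm--Liouville expressions. Recall the structural fact cited above (see \cite[13.18, 13.19]{Weidmann03}) that the maximal operator $A_p$ of $\tau_p=-(pf')'$ is \saj\ exactly when $\tau_p$ is in the limit point case at \emph{both} endpoints $-\infty$ and $+\infty$. Hence it suffices to check that $\tau_p$ is limit point at $+\infty$ and at $-\infty$.

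To verify this I would argue at the spectral parameter $z=0$, where the equation is elementary: $\tau_p\phi=-(p\phi')'=0$ forces $p\phi'$ to be constant, and since $\inv p\in L^1_{loc}(\br)$ the solution space of $\tau_p\phi=0$ is two-dimensional, spanned by $\phi_1\equiv 1$ and $\phi_2=P$ with $P(x)=\int_0^x\inv{p(u)}\,du$. Now I invoke the second structural input, namely that whether $\tau_p$ is limit circle at an endpoint does not depend on $z\in\bc$: $\tau_p$ is limit circle at $+\infty$ if and only if \emph{every} solution of $\tau_p\phi=0$ lies right in $L^2(\br)$, i.e.\ if and only if $\phi_1$ and $\phi_2=P$ are both square integrable near $+\infty$. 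The hypothesis $P\notin L^2(\br^+)$ rules this out, so $\tau_p$ is limit point at $+\infty$; the symmetric argument using $P\notin L^2(\br^-)$ gives limit point at $-\infty$. Together with the first fact this proves the proposition.

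I do not expect a genuine obstacle here: the whole content sits in citing the two structural results and in the one-line integration of $\tau_p\phi=0$, the only subtlety being that one is permitted to test the limit point/limit circle alternative at the real value $z=0$ rather than at a non-real $z$. It is perhaps worth noting that $\phi_1\equiv 1$ is itself never square integrable near $\pm\infty$, so in fact $\tau_p$ on $\br$ is always limit point at both ends and the hypotheses on $P$ are automatically satisfied under \eqref{eq:59}; phrasing the sufficient condition via $P$ is natural because at a \emph{finite} singular endpoint the constant solution does lie in $L^2$, and there it is precisely the condition on $P$ that decides limit point versus limit circle.
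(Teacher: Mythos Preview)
Your argument is correct and follows the same route as the paper, which simply cites \cite[13.24]{Weidmann03} together with \cite[13.8, 13.19]{Weidmann03} for the reduction to the limit point criterion and its verification via Weyl's alternative. Your additional observation that the constant solution $\phi_1\equiv 1$ already fails to lie in $L^2$ near $\pm\infty$, so that $\tau_p$ on $\br$ is automatically limit point at both ends and the hypothesis on $P$ is redundant under~\eqref{eq:59}, is correct.
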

For a proof see~\cite[13.24]{Weidmann03}(or
    \cite[Thm.~6.3]{Weidmann87}) in conjunction with \cite[13.8, 13.19]{Weidmann03}.


\begin{rem}
 Under minimal additional assumptions, one can deduce more information
 about the spectrum of $A_p$. For instance, if 
 there exist  constants $C_1,C_2>0$ such that
\begin{equation*}
    \varliminf_{T\to \infty} \frac{1}{T} \int_0^T \bigabs{1-
      \frac{C_1}{p(u)}} du =0  \quad \text{ and } 
    \varliminf_{T\to \infty} \frac{1}{T}
    \int_{-T}^0 \bigabs{1- \frac{C_2}{p(u)}} du =0 \, , 
\end{equation*}
then $\sigma(A_p) =\sigma_{ess}(A_p)=[0,\infty )$, cf. ~\cite[Thm. 15.1]{Weidmann87}. 
 A sufficient condition for
$\sigma_{ac}(A_p) =\sigma_{ess}(A_p)=[0,\infty)$,
$\sigma_{sc}=\emptyset$ is given in~\cite[Thm 3.2]{Teschl08}.   
\end{rem}

In particular the conditions of Proposition~\ref{prop:saj} are satisfied for eventually constant
functions $p$, i.e., $p$ satisfies 
\begin{equation}
  \tag{MC}\label{eq:39}
  p(x) = 
\begin{cases}
p_-\,,\quad &x< -R\, \\
p_+\,, \quad &x> R\,
  \end{cases}
\end{equation}
for an $R>0$. We call this case  the \emph{model case}. 

For a \saj\ realization  $A$ of a  differential expression $\tau$ of
Sturm-Liouville type the functional  calculus can be described more
explicitly than by the spectral theorem alone. Our reference for most
of the following is mainly \cite{Weidmann03}, and  also \cite{Teschl09,DS2}.


Let  $\rho$ be a  positive semi-definite $ 2 \times 2$ matrix-valued
Borel measure (a \emph{positive matrix measure}),  
 and $L^2(\br, d\rho)$ the completion of the space of simple
 $\bc^2$-valued functions $F,G$ with respect to the scalar product  
 \begin{equation}
 \int_\br F(\lambda) \cdot \overline {G(\lambda)} d\rho(\lambda) 
=\int_{\br}\sum_{j,k=1}^2 F_{j}(\lambda) \overline {G_{k}\lambda)} d\rho_{jk}(\lambda).
 \end{equation}
Note that the trace $\mu=\operatorname{Tr} \rho$ is a positive Borel
measure and the components $\rho_{jk}$ are absolutely \cont\ with
respect to $\mu$. See \cite[XIII.5]{DS2} for basic properties of
matrix measures. 

\begin{thm}[{\cite[14.1, 14.3]{Weidmann03},  \cite[Lem.~9.28]{Teschl09}, \cite[XIII.5]{DS2}}]
  Assume that $\tau$ is a differential expression of \SL\ type in LP
  condition at $\pm \infty$ and that  $A$  is the corresponding \saj\ operator.  

If $\Phi(\lambda,x)=\big(\phi_1(\lambda,x), \phi_2(\lambda,x) \big)$ is a fundamental system of solutions of $(\tau-\lambda)\phi =0$
that depends \cont ly on $\lambda$, then there exists a $ 2 \times 2$ matrix  measure $\rho$, such that the  operator
\begin{equation}
  \label{eq:c21}
\mF_A: L^2(\br) \to L^2(\br, d\rho);
\quad \mF_Af(\lambda)=\int_\br f(x)\,\overline{ \Phi(\lambda,x)} dx
\end{equation}
is unitary and diagonalizes $A$, i.e., 
$
\mF_A A \inv{\mF_A} G (\lambda)= \lambda G(\lambda) 
$
for all $G \in L^2(\br, d\rho)$. 
 The inverse has the form
\[
\inv{\mF_A} G (x) =\int_{\br} G (\lambda)\cdot {\Phi(\lambda,x)} \,d\rho(\lambda)\,.
\]
for $G \in L^2(\br, d\rho)$.

 If  $g$ is a bounded Borel function   on $\br$ then, for every $f \in \Ltwo$,
\begin{equation}
  \label{eq:specrep}
  g(A)f (x) = \int_{\brp} g(\lambda) \mF_A f(\lambda) \cdot {\Phi(\lambda,x)} \, d \rho(\lambda)\,.
\end{equation}
All integrals $\int_\br$ have to be understood as $\lim_{\substack{a
    \to -\infty \\ b \to \infty}}\int_{a}^b$ with convergence in
$L^2$. 
\end{thm}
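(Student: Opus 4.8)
The plan is to reconstruct the classical Weyl--Titchmarsh--Kodaira eigenfunction expansion; I would follow the resolvent/Herglotz route (as in \cite{Teschl09}), using \cite{Weidmann03} as a cross-check. \emph{Step 1 (the Weyl $M$-matrix and the spectral measure).} For $z\in\bc\setminus\br$ the limit point hypothesis at both endpoints provides, up to a scalar, a unique solution $\psi_-(z,\cdot)$ of $(\tau-z)\psi=0$ lying left in $\Ltwo$ and a unique $\psi_+(z,\cdot)$ lying right. Writing these in the fixed fundamental system, $\psi_\pm(z,x)=\Phi(z,x)\cdot v_\pm(z)$, one assembles a $2\times2$ matrix-valued function $M(z)$ (the Weyl $M$-matrix) for which the resolvent $(A-z)^{-1}$ is the integral operator whose Green kernel is built from $\Phi(z,\cdot)$ and $M(z)$. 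A Wronskian computation shows $M$ is matrix Herglotz (analytic on the upper half plane with $\operatorname{Im}M(z)\ge 0$ there), so the matrix Herglotz representation theorem produces a positive $2\times2$ matrix measure $\rho$ with $M(z)=C+Dz+\int_\br\big(\tfrac1{\lambda-z}-\tfrac{\lambda}{1+\lambda^2}\big)\,d\rho(\lambda)$, which is recovered from the boundary values of $M$ by Stieltjes inversion. (Alternatively, as in \cite{Weidmann03}, one first restricts $\tau$ to compact intervals $[a,b]$, where the spectrum is discrete and the spectral matrix atomic, and lets $a\to-\infty$, $b\to+\infty$.)

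\emph{Step 2 (the Parseval identity).} For $f\in\Ltwo$ of compact support, $\mF_Af(\lambda)=\int_\br f(x)\,\overline{\Phi(\lambda,x)}\,dx$ converges for every $\lambda$. Computing $\langle(A-z)^{-1}f,f\rangle$ in two ways --- abstractly it equals $\int_\br(\lambda-z)^{-1}\,d\langle E_\lambda f,f\rangle$ by the spectral theorem ($E$ the spectral resolution of $A$), and via the Green kernel and the Herglotz representation of $M$ it equals $\int_\br(\lambda-z)^{-1}\,\mF_Af(\lambda)\cdot\overline{\mF_Af(\lambda)}\,d\rho(\lambda)$ --- uniqueness in the Stieltjes representation forces $d\langle E_\lambda f,f\rangle=\mF_Af(\lambda)\cdot\overline{\mF_Af(\lambda)}\,d\rho(\lambda)$, hence $\|f\|_2^2=\int_\br|\mF_Af|^2\,d\rho$. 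Since compactly supported functions are dense in $\Ltwo$, $\mF_A$ extends to an isometry into $L^2(\br,d\rho)$, the integral in \eqref{eq:c21} now being understood in the stated $L^2$-sense of truncations.

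\emph{Step 3 (diagonalization, surjectivity, functional calculus).} Integrating by parts twice and using $(\tau-\lambda)\Phi(\lambda,\cdot)=0$ gives $\mF_A(Af)(\lambda)=\lambda\,\mF_Af(\lambda)$ for $f\in\mD(A)$ of compact support, so by density $\mF_A$ intertwines $A$ with multiplication by $\lambda$. Polarizing the Parseval identity identifies the adjoint $\mF_A^*$ as $G\mapsto\int_\br G(\lambda)\cdot\Phi(\lambda,x)\,d\rho(\lambda)$; rerunning the two-way resolvent computation with two functions yields $\mF_A\mF_A^*=\id$ on $L^2(\br,d\rho)$, so $\mF_A$ is unitary and $\mF_A^*=\inv{\mF_A}$. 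Formula \eqref{eq:specrep} then follows by inserting $\id=\inv{\mF_A}\mF_A$, using the diagonalization, and approximating $g$ by simple functions.

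\emph{Main obstacle.} The delicate point is completeness, i.e.\ surjectivity of $\mF_A$ --- equivalently, that $\mF_A^*$ is also isometric --- and this is exactly where the limit point hypothesis is indispensable: it makes the Weyl solutions $\psi_\pm$, hence $M(z)$ and $\rho$, unique and canonically defined, with no boundary data needed at $\pm\infty$ (without it one would be forced to adjoin boundary conditions at infinity, as in the limit circle case). A secondary technical burden is the passage from compactly supported $f$, where $\mF_Af$ is an honest integral and the argument is cleanest, to general $f\in\Ltwo$, where one controls $\mF_Af$ only in $L^2(\br,d\rho)$ and the truncated integrals $\int_a^b$ only in $L^2(\br)$; the continuous dependence of $\Phi$ on $\lambda$ is what keeps all these limits measurable and well defined.
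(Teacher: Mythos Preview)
The paper does not prove this theorem at all --- it is quoted from the literature (Weidmann, Teschl, Dunford--Schwartz) as background, and only Appendix~A sketches the Weyl--Titchmarsh--Kodaira formula for computing $\rho$ from the boundary values of the $m$-matrices, which is precisely your Step~1. Your outline is the standard resolvent/Herglotz route of those references and is essentially correct; there is nothing to compare against in the paper itself.

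One small comment on your Step~3: the phrase ``rerunning the two-way resolvent computation with two functions yields $\mF_A\mF_A^*=\id$'' is too compressed to be convincing as stated. The usual argument for surjectivity is either (i) to show that the range of $\mF_A$ is dense by checking that any $G\in L^2(\br,d\rho)$ orthogonal to the range must vanish (using that the resolvents of $A$ are already accounted for), or (ii) to pass through the half-line problems, where surjectivity is more transparent, and glue. Either way this is where the real work sits, as you correctly flag; just be aware that the polarized Parseval identity alone gives $\mF_A^*\mF_A=\id$ but not automatically $\mF_A\mF_A^*=\id$.
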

Note that  the spectral projection $c_\Lambda (A)$ is given by $
c_\Lambda (A)f (x) = \int_{\Lambda}  \mF_A f(\lambda) \cdot
{\Phi(\lambda,x)} \, d \rho(\lambda)$. 

$\mF_A$ is called the \emph{spectral transform} (or also  a spectral representation of $A$).
  \begin{rems}
1.  It is always possible to choose a fundamental system of
    solutions $\Phi (\lambda   ,\cdot )$ that depends continuously (even
    analytically)  on
    $\lambda $~\cite{Weidmann03}.
  The spectral measure can then  be  constructed explicitly from the
    knowledge of such a  set of  fundamental solutions $(A-z)\phi =
    0$, see \cite{DS2,Teschl09,Weidmann87,Weidmann03}. We will explain
    some details of this construction in  Appendix~A. 

2.   It can be shown that under mild 
     conditions the measure $d\rho $  is absolutely
     \cont\ with respect to Lebesgue measure, see
     ~\cite[Thm. 3.2]{Teschl08}. 
  \end{rems}

\section{Basic properties of Paley-Wiener functions}
\label{sec:basics}
In this section we define  the Paley-Wiener spaces and describe  some of their elementary properties.
\begin{defn} \label{pesbw}
Assume that  $\Lambda \subseteq \br$ with finite measure and that  $A$
is a \saj\ realization of a differental expression of \SL\ type. A
function $f \in L^2(\br)$ is in the Paley-Wiener space $\PW [A]
{\Lambda} $ (or $\Lambda$-\BL\ with respect to $A$), if  
  \begin{equation}\label{eq:abl}
    f= 
    c_{\Lambda}(A) f \, . 
  \end{equation}
\end{defn}

Definition~\ref{pesbw} is a special case of Pesenson's  abstract
notion of \bw\ ~\cite{Pes01} for  general \saj\ operators on a Hilbert
space. Subsequently Paley-Wiener spaces became an important notion in many
investigations in analysis,
see~\cite{grkl10,Pesenson09,Zayed08,petrushev15} for some examples. 
Our main contribution is the detailed investigation of the Paley-Wiener space associated to the
Sturm-Liouville operator $A_p = - (pf')'$ and their interpretation  as spaces of variable
\bw . Our  main interest is  the subtle  dependence of these spaces on
the  \bw\ parametrization $p$ and corresponding sampling results.

Using the spectral theory of Sturm-Liouville operators, the
Paley-Wiener spaces $PW_\Lambda (A)$ possess  characterizations
similar to  the standard spaces of \bdl\ functions.

\begin{prop} \label{prop:charbw}
  Assume that $\Lambda \subseteq \br ^+_0$ with finite measure and that
  $A \geq 0$ is  a \saj\
realization of a differential expression of \SL\ type with spectral
measure $\rho$ and corresponding spectral transform $\mF _A$. Then the
following are equivalent:

(i) $f\in PW_\Lambda (A)$,

(ii) $\mathrm{supp}\, \mF _A f \subseteq \Lambda$, 

(iii) there exists  a function $F \in L^2(\Lambda, d\rho)$ such that
\begin{equation} 
f(x)=  \int_\Lambda F(\lambda) \cdot \Phi(\lambda,x) \,
d\rho(\lambda) \qquad \text{ a.e. } \, x\in \br \, .\label{eq:6}
 \end{equation}

If the spectral set is an interval,  $\Lambda = [0,\Omega ]$, then
also the following conditions are equivalent to (i) --- (iii):

(iv) \label{item:6} Bernstein's inequality:   $\norm{A^k f}
\leq  \Omega^k \norm{f}$ for all $k \in \bn$.

(v) Analyticity: For all $g\in L^2(\br )$ the function $z\in \bc \to
\langle e^{zA} f, g\rangle $ is an entire function of exponential type
$\Omega $, i.e., for all $\epsilon >0$
$$
|\langle e^{zA}f, g\rangle | = \mO (e^{(\Omega +\epsilon ) |\Im z|}) \, .
$$
\end{prop}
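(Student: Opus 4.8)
Everything flows from the spectral transform $\mF_A$ of the theorem quoted above together with the identity $c_\Lambda(A)f=\mF_A^{-1}\big(\mathbf 1_\Lambda\,\mF_Af\big)$ recorded in the remark following it. The plan is to prove (i) $\Leftrightarrow$ (ii) $\Leftrightarrow$ (iii) for arbitrary finite-measure $\Lambda$, and then, for $\Lambda=[0,\Omega]$, to run the two rings (ii) $\Rightarrow$ (iv) $\Rightarrow$ (ii) and (ii) $\Rightarrow$ (v) $\Rightarrow$ (ii); since (i) $\Leftrightarrow$ (ii) this yields all asserted equivalences. For the first block: since $\mF_A$ is unitary, $f=c_\Lambda(A)f$ holds iff $\mF_Af=\mathbf 1_\Lambda\mF_Af$ in $L^2(\br,d\rho)$, i.e.\ iff $\mF_Af$ vanishes $\rho$-a.e.\ on $\br\setminus\Lambda$ --- this is exactly (ii), once $\supp\mF_Af\subseteq\Lambda$ is read as "$\mF_Af=0$ $\rho$-a.e.\ off $\Lambda$". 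For (ii) $\Rightarrow$ (iii) put $F:=\mF_Af|_\Lambda\in L^2(\Lambda,d\rho)$; then $f=\mF_A^{-1}\mF_Af$ and the inverse formula of the quoted theorem is precisely \eqref{eq:6}. For (iii) $\Rightarrow$ (ii), extend $F$ by zero to $\tilde F\in L^2(\br,d\rho)$ with $\supp\tilde F\subseteq\Lambda$; then $\mF_A^{-1}\tilde F=f$, and applying $\mF_A$ (the inverse of $\mF_A^{-1}$) gives $\mF_Af=\tilde F$, hence (ii). The integral in \eqref{eq:6} is a priori the $L^2$-limit furnished by the isometry $\mF_A^{-1}$; that it represents $f$ pointwise a.e.\ will follow later from the reproducing-kernel property of $PW_\Lambda(A)$.

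\textbf{The interval case.} Write $d\nu:=\mF_Af\cdot\overline{\mF_Af}\,d\rho$, a \emph{positive scalar} measure with $\nu(\br)=\|f\|_2^2$ and $\supp\nu\subseteq[0,\infty)$ because $A\ge0$; for any Borel $h$, $\|h(A)f\|_2^2=\int |h(\lambda)|^2\,d\nu(\lambda)$ by \eqref{eq:specrep} and unitarity. Then (ii) $\Rightarrow$ (iv): if $\supp\mF_Af\subseteq[0,\Omega]$ then $\supp\nu\subseteq[0,\Omega]$, so $\|A^kf\|_2^2=\int_{[0,\Omega]}\lambda^{2k}\,d\nu\le\Omega^{2k}\|f\|_2^2$. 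And (iv) $\Rightarrow$ (ii): if $\nu\big((\Omega+\varepsilon,\infty)\big)=\delta>0$ for some $\varepsilon>0$, then $\|A^kf\|_2^2\ge(\Omega+\varepsilon)^{2k}\delta$, contradicting $\|A^kf\|_2^2\le\Omega^{2k}\|f\|_2^2$ as $k\to\infty$; hence $\nu\big((\Omega,\infty)\big)=0$, i.e.\ (ii). For (v): once $\supp\mF_Af\subseteq[0,\Omega]$, the factor $e^{z\lambda}$ is bounded on $\supp\mF_Af$ for every $z\in\bc$, so $e^{zA}f$ is well defined and
\[
\langle e^{zA}f,g\rangle=\int_{[0,\Omega]}e^{z\lambda}\,d\mu_{f,g}(\lambda),\qquad d\mu_{f,g}:=\mF_Af\cdot\overline{\mF_Ag}\,d\rho ,
\]
a (Fourier--)Laplace transform of a finite scalar measure carried by $[0,\Omega]$, with $\|\mu_{f,g}\|\le\|f\|_2\|g\|_2$ by Cauchy--Schwarz in $L^2(d\rho)$; differentiating under the integral shows this is entire, and the Paley--Wiener--Schwartz bound on such a transform gives the exponential-type estimate of (v). Conversely (v) $\Rightarrow$ (ii): apply (v) with $g=c_E(A)f$, $E=(\Omega+\varepsilon,\infty)$, so that $\langle e^{tA}f,g\rangle=\int_Ee^{t\lambda}\,d\nu(\lambda)\ge e^{(\Omega+\varepsilon)t}\nu(E)$ for $t>0$; exponential type $\le\Omega$ forces $\nu(E)=0$, and letting $\varepsilon\downarrow0$ yields $\supp\mF_Af\subseteq[0,\Omega]$.

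\textbf{Main obstacle.} There is no deep difficulty here --- the proposition is essentially the spectral theorem in disguise, so the effort is entirely in careful bookkeeping. The points that need attention are: handling the matrix-valued measure $\rho$ correctly (the Cauchy--Schwarz estimate $|\int F\cdot\overline G\,d\rho|\le\|F\|_{L^2(d\rho)}\|G\|_{L^2(d\rho)}$, and the fact that $\int G\cdot\overline G\,d\rho=0$ forces $G=0$ $\rho$-a.e., so that vanishing of $\nu$ transfers to vanishing of $\mF_Af$); the correct reading of "$\supp\subseteq\Lambda$" as a $\rho$-a.e.\ statement; and, in (v), the tacit requirement $f\in\mD(e^{zA})$ for all $z$, which is automatic once $\mF_Af$ has bounded support and is otherwise built into the hypothesis of (v). The two converse implications (iv) $\Rightarrow$ (ii) and (v) $\Rightarrow$ (ii), although elementary moment/growth arguments, are the only steps that are not a one-line rewriting via $\mF_A$.
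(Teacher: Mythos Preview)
Your proof is correct and, for the equivalences (i)--(iii), proceeds exactly as the paper does: both arguments reduce $f=c_\Lambda(A)f$ to $\mF_Af=\mathbf 1_\Lambda\,\mF_Af$ via unitarity of $\mF_A$, and identify the representation \eqref{eq:6} with the inverse spectral transform. For (iv) and (v) the paper gives no argument at all --- it simply cites Pesenson \cite{Pes01} and Pesenson--Zayed \cite{Pesenson09,Zayed08} --- whereas you supply the standard spectral-theoretic moment and growth arguments directly; this is more self-contained but not a genuinely different route.

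One minor point worth flagging: in your sketch of (ii) $\Rightarrow$ (v) you invoke a Paley--Wiener--Schwartz bound, but the Laplace-type integral $\int_{[0,\Omega]}e^{z\lambda}\,d\mu_{f,g}$ has growth controlled by $|\mathrm{Re}\,z|$, not $|\mathrm{Im}\,z|$ as the statement of (v) literally reads. This is an artifact of the paper's formulation (which is quoting the cited references), not a flaw in your reasoning; your converse (v) $\Rightarrow$ (ii) via real $t>0$ works under either reading.
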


\begin{proof}
(i) $\Rightarrow$ (ii), (iii): Since  $f
= c_\Lambda (A)f$ for  $f\in PW_\Lambda (A)$, \eqref{eq:specrep}
implies that 
$$
f(x) =  \int_{\brp}  \mF_A f(\lambda) \cdot
{\Phi(\lambda,x)} \, d \rho(\lambda) =  c _\Lambda (A)f (x) =
\int_{\brp} c _\Lambda (\lambda) \mF_A f(\lambda) \cdot {\Phi(\lambda,x)} \, d \rho(\lambda)\,.
$$
Since $\mF _A ^{-1} $ is unitary and thus one-to-one, this identity implies
that $\mF_A f(\lambda ) = c _\Lambda (\lambda ) \mF_A f(\lambda ) $
for $\rho$-almost all $\lambda $, whence $\supp \, \mF _A f \subseteq
\Lambda $. 

     (iii) $\Rightarrow$ (i) Conversely, if $f$ is  represented by
    \eqref{eq:6}, then $c _\Lambda (A) f = f$ and thus $f\in
    PW_\Lambda (A)$. 

The equivalence (i) $\Leftrightarrow$ (iv) 
follows directly from the spectral theorem for self-adjoint unbounded
operators and was first proved in~\cite{Pes01}    for an  abstract notion
of bandlimitedness. The characterization
(i) $\Leftrightarrow$ (v) is proved in \cite{Pesenson09, Zayed08}.
See also~\cite{grkl10} for a related characterization of \bw .
\end{proof}

\begin{prop} \label{prop:basicpw} 
Let $\Lambda$ be a subset of $\br ^+ _0$ with finite measure, and $A \geq 0$ a \saj\
realization of a differential expression of \SL\ type. 

(i) Then   \label{item:4} the Paley-Wiener space $\PW [A] \Lambda$ is
a closed subspace of $\Ltwo$. Every function in $\PW [A] \Lambda $ is
continuous. 

(ii) \label{item:8} If $\Lambda$ is compact, the Paley-Wiener space $\PW [A]\Lambda$ is a
reproducing kernel Hilbert space; its  kernel  is 
 \begin{equation} 
k_\Lambda (x,y)=k (x,y)=\int_\Lambda \overline{ \Phi(\lambda,x)} \cdot
{\Phi(\lambda,y)}\, d \rho(\lambda) \, ,\label{eq:45} 
\end{equation}
and $k$ is the integral kernel of the spectral projection $c_\Lambda(A)$ from
$L^2(\br )$ onto $\PW [A]\Lambda $.  The kernel $k$ is \cont\ in $x$ and $y$.
\end{prop}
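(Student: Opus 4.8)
The plan is to read off both parts from the spectral transform $\mF_A$ and the representation formula \eqref{eq:6} of Proposition~\ref{prop:charbw}. Closedness of $PW_\Lambda(A)$ is immediate: since $A$ is \saj, $c_\Lambda(A)$ is an orthogonal projection on $\Ltwo$, so its range $PW_\Lambda(A)=c_\Lambda(A)\Ltwo$ is a closed subspace; equivalently, $\mF_A$ is unitary and carries $PW_\Lambda(A)$ onto $\{F:\supp F\subseteq\Lambda\}$, which is closed.

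For the continuity assertion, and also for part (ii), I would first normalize the fundamental system $\Phi(\lambda,\cdot)$ to be \emph{real-valued} for $\lambda\in\br$ — legitimate because the initial data at the base point may be taken real and $\lambda$ is real — so that the conjugations in \eqref{eq:6} and \eqref{eq:45} disappear. Then for $f\in PW_\Lambda(A)$ with $F=\mF_A f\in L^2(\Lambda,d\rho)$, $\|F\|_{L^2(\Lambda,d\rho)}=\|f\|_2$, formula \eqref{eq:6} says $f(x)=\langle F,\Phi(\cdot,x)\rangle_{L^2(\Lambda,d\rho)}$ a.e. The crux is that $x\mapsto\Phi(\cdot,x)$ is a continuous map of $\br$ into $L^2(\Lambda,d\rho)$ with $\sup_{x\in K}\|\Phi(\cdot,x)\|^2_{L^2(\Lambda,d\rho)}=\sup_{x\in K}k_\Lambda(x,x)<\infty$ on every compact $K$. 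This follows from basic Sturm--Liouville ODE theory: solutions of $-(p\Phi')'=\lambda\Phi$ depend continuously (in fact $C^1$) on $x$ and remain bounded on compact $x$-intervals, uniformly for $\lambda$ in the relevant range, while the spectral matrix measure $\rho$ is locally finite — all of which is automatic once $\Lambda$ is compact. Granting this, Cauchy--Schwarz yields the reproducing-kernel bound $|f(x)|\le\|f\|_2\,k_\Lambda(x,x)^{1/2}$ and continuity of $f$ from continuity of $x\mapsto\langle F,\Phi(\cdot,x)\rangle$. For a general $\Lambda$ of finite Lebesgue measure (needed only in part (i)) I would exhaust by $\Lambda\cap[0,n]$ and use the monotone bound $k_{\Lambda\cap[0,n]}(x,x)\le k_\Lambda(x,x)$, so that the continuous truncations $c_{\Lambda\cap[0,n]}(A)f$ converge to $f$ locally uniformly, forcing $f$ continuous.

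For part (ii) with $\Lambda$ compact, the bound above shows point evaluations are bounded on $PW_\Lambda(A)$, so it is a reproducing kernel Hilbert space. To identify the kernel I would set $G_y:=\Phi(\cdot,y)\in L^2(\Lambda,d\rho)$ (it lies in $L^2$ because $\|G_y\|^2=k_\Lambda(y,y)<\infty$) and check, using the symmetry of the matrix measure $\rho$ and of the dot product, that $\mF_A^{-1}(\chi_\Lambda G_y)=k(\cdot,y)$ with $k$ as in \eqref{eq:45}; hence $k(\cdot,y)\in PW_\Lambda(A)$ by Proposition~\ref{prop:charbw}\,(iii). Unitarity of $\mF_A$ together with \eqref{eq:6} then gives, for all $f\in PW_\Lambda(A)$, $\langle f,k(\cdot,y)\rangle_{\Ltwo}=\langle\mF_A f,\chi_\Lambda G_y\rangle_{L^2(\Lambda,d\rho)}=\int_\Lambda \mF_A f(\lambda)\cdot\Phi(\lambda,y)\,d\rho(\lambda)=f(y)$, so $k$ is the reproducing kernel. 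That $k$ is the integral kernel of $c_\Lambda(A)$ is then the standard RKHS computation: for $g\in\Ltwo$, $(c_\Lambda(A)g)(y)=\langle c_\Lambda(A)g,k(\cdot,y)\rangle=\langle g,c_\Lambda(A)k(\cdot,y)\rangle=\langle g,k(\cdot,y)\rangle$ by \saj ness of $c_\Lambda(A)$ and $k(\cdot,y)\in PW_\Lambda(A)$; unwinding the inner product and using $\overline{k(x,y)}=k(y,x)$ gives $(c_\Lambda(A)g)(y)=\int_\br k(y,x)g(x)\,dx$. Joint continuity of $k$ is a corollary: $k(x,y)=\langle\Phi(\cdot,x),\Phi(\cdot,y)\rangle_{L^2(\Lambda,d\rho)}$ and $x\mapsto\Phi(\cdot,x)$ is norm-continuous into $L^2(\Lambda,d\rho)$, so the inner product is jointly continuous.

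I expect the only genuine obstacle to be the control of the synthesis functions $\Phi(\lambda,x)$ — the continuity and local boundedness of $x\mapsto\Phi(\cdot,x)$ as a map into $L^2(\Lambda,d\rho)$, equivalently the finiteness and continuity of the diagonal $k_\Lambda(x,x)$ — since everything else is bookkeeping with the unitary $\mF_A$ and the definition of $c_\Lambda(A)$. This obstacle is dispatched by elementary Sturm--Liouville estimates (boundedness of solutions on compact $x$-intervals, continuous dependence on $x$ and $\lambda$) together with local finiteness of the spectral matrix measure $\rho$; this is precisely where the compactness of $\Lambda$ enters part (ii), and where the finiteness of $|\Lambda|$ plus the exhaustion argument enters part (i).
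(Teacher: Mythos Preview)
Your approach is essentially the same as the paper's: Cauchy--Schwarz in $L^2(\Lambda,d\rho)$ for the reproducing-kernel bound, continuity of $x\mapsto\Phi(\cdot,x)$ for continuity of $f$ and of $k$, and the unitary $\mF_A$ for the kernel identification. You actually supply more detail than the paper, which cites Dunford--Schwartz for formula~\eqref{eq:45} and appeals to ``classical facts about parameter integrals'' for continuity; your explicit verification that $k(\cdot,y)\in PW_\Lambda(A)$ via $\mF_A^{-1}(\chi_\Lambda\Phi(\cdot,y))$ and the RKHS computation for the integral-kernel property are correct and cleaner than the interchange-of-integrals sketch in the paper.

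One genuine gap: your exhaustion argument for continuity in part~(i) when $\Lambda$ is merely of finite Lebesgue measure is circular. You write ``use the monotone bound $k_{\Lambda\cap[0,n]}(x,x)\le k_\Lambda(x,x)$, so that the continuous truncations converge to $f$ locally uniformly,'' but this bound is only useful if $k_\Lambda(x,x)<\infty$, which is exactly what is not established for non-compact $\Lambda$; finiteness of the \emph{Lebesgue} measure of $\Lambda$ says nothing about $\|\Phi(\cdot,x)\|_{L^2(\Lambda,d\rho)}$, since the solutions $\Phi(\lambda,x)$ typically grow in $\lambda$ and $\rho$ need not be absolutely continuous with bounded density. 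Concretely, the tail estimate $|f(x)-c_{\Lambda\cap[0,n]}(A)f(x)|\le\|F\chi_{\Lambda\setminus[0,n]}\|\cdot k_{\Lambda\setminus[0,n]}(x,x)^{1/2}$ has a first factor tending to zero but a second factor you cannot control. In fairness, the paper's own proof of part~(i) applies the Cauchy--Schwarz inequality~\eqref{eq:7} without checking that $\|\Phi(\cdot,x)\|_{L^2(\Lambda,d\rho)}<\infty$ for non-compact $\Lambda$, and the Remark immediately following the proposition concedes that this finiteness is an additional hypothesis. So the gap is in the statement as much as in your argument; for the compact case (part~(ii)), your proof is complete.
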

\begin{proof}
We apply the Cauchy-Schwarz inequality for $L^2(\br , d\rho
)$~\cite[XIII.5.8]{DS2} to \eqref{eq:6} and obtain 
\begin{equation} \label{eq:7}
\abs {f(x)} \leq \norm{F}_{L^2(\Lambda, d\rho)}
\norm{\Phi(\cdot,x)}_{L^2(\Lambda, d\rho)} \, .
\end{equation}
Thus the pointwise evaluation $f \mapsto f(x) $ is continuous on
$PW_\Lambda (A)$ and  $PW_\Lambda (A)$ is a reproducing kernel Hilbert
space. Since  $\Phi(\lambda,x)$ is \cont\ in $\lambda$, the continuity
of $f$ 
follows from classical facts about parameter integrals. 

The formula \eqref{eq:45} for the reproducing kernel is proved
in~\cite[XIII.5.24]{DS2}.  It follows directly from the identity
\begin{equation*}\label{eq:bl}
  \begin{split}
    P_\Lambda(A)f (x)&= \int_\Lambda \mF_A f(\lambda)\cdot\overline{\Phi(\lambda,x)} d \rho(\lambda) \\
   &=\int_\Lambda \int_\br f(y) { \Phi(\lambda,y)}\cdot \overline{\Phi(\lambda,x)} d \rho(\lambda) dy\\
   &= \int_\br f(y) \,\overline{k_\Omega (x,y)}\,dy  \,,
    \end{split}
 \end{equation*}
after justifying the interchange of the integrals. 
\end{proof}

\begin{rem}
  The compactness condition in \eqref{item:8} above can be relaxed in
  various important cases.  The inequality ~\eqref{eq:7} holds if
  $\norm{\Phi(\cdot,x)}_{L^2(\Lambda, d\rho)}$ is finite. This is the
  case under the following set of conditions: (i) $\abs \Lambda <
  \infty$, (ii)  the spectral measure is
  absolutely \cont\ with respect to Lebesgue measure, and (iii)  the
  solutions $\Phi(\cdot,x)$ are bounded for every $x$. In particular
  this is true for the Schr\"odinger operator \eqref{eq:36} with
  compactly supported potential $q$. We will use this  fact in 
  Section~\ref{sec:landau} (Proposition \ref{Schroescatt} and
  Equation \eqref{eq:specfun}). 
\end{rem}

As mentioned in the introduction, a function in $PW _{[0,\Omega
  ]}(A_p)$ behaves locally like a function with \bw\ $(\Omega
/p(x))^{1/2}$. The next result provides a precise formulation for this vague
idea. Before its statement we recall that a function $f$ belongs to
the Bernstein space $B_\Omega$, if its distributional
Fourier transforms has support in $[-\Omega, \Omega]$. By the
Paley-Wiener theorem for distributions (see, e.g., \cite{Rudin73})
$f\in B_\Omega $, \fif\  $f$ can be extended from $\br$ to an entire
function $F$ of exponential type $\Omega $, i.e., $F|_\br = f$ and
$|F(z)| = \mO (e^{(\Omega +\epsilon ) |\mathrm{Im}\, z|})$. 

\begin{prop} \label{prop:effbw} 
  If $p(x)=p_0$ for $x$ in an open interval $I$, then on $I$  every $f \in
  PW_{[0,\Omega]}(A_p)$  coincides with a function in
  $B_{\sqrt{\Omega/p_0}}$ restricted to $I$.  
\end{prop}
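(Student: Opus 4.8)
The plan is to read off from the spectral representation of $f$ its explicit shape on $I$ --- a superposition of the exponentials $e^{\pm i\omega x}$ with frequency $\omega\in[0,\sqrt{\Omega/p_0}]$ --- and then to resum that superposition over all of $\br$, producing a function with distributional Fourier transform supported in $[-\sqrt{\Omega/p_0},\sqrt{\Omega/p_0}]$ that still agrees with $f$ on $I$.

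First I would invoke Proposition~\ref{prop:charbw}(iii) with $\Lambda=[0,\Omega]$ to write $f(x)=\int_{[0,\Omega]}F(\lambda)\cdot\Phi(\lambda,x)\,d\rho(\lambda)$ for some $F\in L^2([0,\Omega],d\rho)$, where the fundamental system $\Phi(\lambda,\cdot)=(\phi_1(\lambda,\cdot),\phi_2(\lambda,\cdot))$ solves $-(p\Phi')'=\lambda\Phi$. On $I$ the coefficient is the constant $p_0$, so there each $\phi_k(\lambda,\cdot)$ solves $-p_0\phi''=\lambda\phi$, whose solution space is spanned by $e^{\pm i\omega x}$ with $\omega=\omega(\lambda):=\sqrt{\lambda/p_0}$, ranging over $[0,\sigma]$ where $\sigma:=\sqrt{\Omega/p_0}$. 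I would then fix $x_0<x_1$ in $I$ with $\sigma(x_1-x_0)<\pi$ --- possible since $I$ is open --- and use that a solution of $-p_0\phi''=\lambda\phi$ on $I$ is determined by its values at $x_0,x_1$ (as $\sin(\omega(x_1-x_0))\neq0$ for $\omega\in(0,\sigma]$): for $x\in I$,
\[
\phi_k(\lambda,x)=\ell_0(\lambda,x)\,\phi_k(\lambda,x_0)+\ell_1(\lambda,x)\,\phi_k(\lambda,x_1),\qquad k=1,2,
\]
with the \emph{same} coefficients $\ell_0(\lambda,x)=\sin\omega(x_1-x)/\sin\omega\Delta$, $\ell_1(\lambda,x)=\sin\omega(x-x_0)/\sin\omega\Delta$ (with $\Delta=x_1-x_0$, read as $(x_1-x)/\Delta$, $(x-x_0)/\Delta$ at $\omega=0$). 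The crucial structural observation is that for each $\lambda$ these coefficients extend to \emph{entire} functions of $z$ of exponential type $\omega\le\sigma$ satisfying a bound $|\ell_j(\lambda,z)|\le C(1+|z-x_0|)\,e^{\sigma|\mathrm{Im}\,z|}$ uniform in $\lambda\in[0,\Omega]$ --- the condition $\sigma\Delta<\pi$ is exactly what keeps $\sin\omega\Delta$ away from $0$ on compact subsets of $(0,\sigma]$, and the polynomial factor absorbs the regime $\omega\to0$.

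Substituting the display into the spectral integral and pulling the finite sum over $j,k$ through the $\lambda$-integral gives, for $x\in I$,
\[
f(x)=\int_{[0,\Omega]}\ell_0(\lambda,x)\,d\nu_0(\lambda)+\int_{[0,\Omega]}\ell_1(\lambda,x)\,d\nu_1(\lambda),
\]
where $d\nu_j(\lambda):=F(\lambda)\cdot\Phi(\lambda,x_j)\,d\rho(\lambda)$ is a finite complex measure on $[0,\Omega]$: by the Cauchy--Schwarz inequality for the matrix measure $\rho$ (see~\cite[XIII.5]{DS2}) its total variation is at most $\|F\|_{L^2(\Lambda,d\rho)}\,\|\Phi(\cdot,x_j)\|_{L^2(\Lambda,d\rho)}=\|f\|_2\,k_\Lambda(x_j,x_j)^{1/2}$, which is finite by Proposition~\ref{prop:basicpw}. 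I would then \emph{define} $g(z):=\int_{[0,\Omega]}\ell_0(\lambda,z)\,d\nu_0(\lambda)+\int_{[0,\Omega]}\ell_1(\lambda,z)\,d\nu_1(\lambda)$ for $z\in\bc$; the uniform estimate on $\ell_j$ together with finiteness of $\|\nu_j\|$ shows, via Morera's theorem, that $g$ is entire with $|g(z)|\le C'(1+|z-x_0|)e^{\sigma|\mathrm{Im}\,z|}$, i.e.\ of exponential type $\le\sigma$ and of at most polynomial growth on $\br$. By the Paley--Wiener theorem for distributions \cite{Rudin73} this forces $\widehat g$ to be supported in $[-\sigma,\sigma]$, so $g\in B_{\sqrt{\Omega/p_0}}$. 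Finally $g$ and $f$ are represented by the same integral on $I$ and both are continuous (Proposition~\ref{prop:basicpw}), hence $g|_I=f|_I$, which is the assertion.

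The part that needs the most care is conceptual rather than computational: one must produce the \emph{right} extension. A purely local argument --- $f$ is real-analytic on $I$, since the Bernstein bound $\|A_p^kf\|_2\le\Omega^k\|f\|_2$ of Proposition~\ref{prop:charbw}(iv) reads $\|f^{(2k)}\|_{L^2(I)}\le\sigma^{2k}\|f\|_2$ there, so a Taylor expansion converges --- only yields an entire extension of exponential type $\le\sigma$, and such a function need not lie in $B_\sigma$ (for instance $e^{\sigma x}$ has exponential type $\sigma$ but is not bandlimited). The spectral representation is indispensable precisely because it exhibits $f|_I$ as a superposition of the \emph{bounded} exponentials $e^{\pm i\omega x}$, $\omega\le\sigma$, so that the resummed function automatically has its distributional Fourier transform in $[-\sigma,\sigma]$. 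The two remaining technical points --- the behaviour of the coefficients $\ell_j$ as $\omega\to0$ and the finiteness of the amplitude measures $\nu_j$ --- are dispatched respectively by the polynomial factor in the bound on $\ell_j$ and by Cauchy--Schwarz for matrix measures combined with finiteness of the diagonal of the reproducing kernel \eqref{eq:45}.
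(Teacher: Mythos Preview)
Your argument is correct and follows the same strategic arc as the paper's proof: both start from the spectral representation $f(x)=\int_{[0,\Omega]}F(\lambda)\cdot\Phi(\lambda,x)\,d\rho(\lambda)$, recognise that on $I$ each $\phi_k(\lambda,\cdot)$ is a linear combination of $e^{\pm i\sqrt{\lambda/p_0}\,x}$, and then extend that combination from $I$ to all of $\bc$ to obtain an entire function of exponential type $\sqrt{\Omega/p_0}$ agreeing with $f$ on $I$.

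The difference is only in how the extension is organised. The paper simply \emph{chooses} the fundamental system so that $\Phi(\lambda,x)=\tilde\Phi(\lambda,x):=(e^{i\sqrt{\lambda/p_0}\,x},\,e^{-i\sqrt{\lambda/p_0}\,x})$ for $x\in I$; the extension $\tilde f(x)=\int_0^\Omega F(\lambda)\cdot\tilde\Phi(\lambda,x)\,d\rho(\lambda)$ is then immediately bounded on $\br$ and of the right exponential type, so the appeal to Paley--Wiener is for bounded functions. You instead keep an arbitrary fundamental system and pass to the Lagrange-type interpolants $\ell_0,\ell_1$ based at two points of $I$, trading the paper's one-line extension for a self-contained estimate on the $\ell_j$ and an appeal to the distributional Paley--Wiener theorem to absorb the linear growth on $\br$. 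Your route is slightly longer but arguably more robust: it does not rely on the freedom to pick a continuous-in-$\lambda$ fundamental system coinciding with the exponentials on $I$ --- a choice that, as stated in the paper, degenerates at $\lambda=0$, where $e^{i\sqrt{\lambda/p_0}\,x}$ and $e^{-i\sqrt{\lambda/p_0}\,x}$ collapse to the same function. Your interpolants $\ell_j$ pass smoothly through $\omega=0$ (becoming the affine interpolant), which is exactly why you need the polynomial factor in your growth bound.
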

\begin{proof}
Since $p|_I = p_0$, the restriction of the differential expression
$\tau _p$ to $I$ is just $-p_0 \tfrac{d^2}{dx^2}$ and therefore the
eigenvalue equation $( \tau _p - \lambda )\phi = 0$ possesses the
solutions $e^{\pm i \sqrt{\lambda/p_0} x}$ on $I$. Let $\tilde \Phi
  (\lambda , x) = (e^{i \sqrt{\lambda/p_0} x }, e^{- i
      \sqrt{\lambda/p_0} x})$ for \emph{all} $x\in \br $, i.e.,
    $\tilde \Phi
    (\lambda , \cdot )$ is a
      fundamental system of $(-p_0 \tfrac{d^2}{dx^2} - \lambda )\phi =
      0$ on $\br $. We may now  choose  a fundamental system $\Phi
      (\lambda ,  \cdot )$ of $(\tau _p - \lambda ) \phi =0$ and a
      corresponding spectral measure $\rho $, such
      that $\Phi (\lambda , x) = \tilde \Phi (\lambda ,x)$ for $x\in
      I$. By ~\eqref{eq:6} every $f\in PW_\Lambda (A_p)$ has a
      representation $f(x) =   \int _0 ^\Omega  F(\lambda) \cdot \Phi(\lambda,x) \, d\rho(\lambda)$
for some $F \in L^2([0,\Omega ]  ,d\rho )$.  Now set 
$$
\tilde f(x) =   \int _0 ^\Omega  F(\lambda) \cdot \tilde
\Phi(\lambda,x) \, d\rho(\lambda) \, . 
$$
Then $f(x) = \tilde f(x) $ for $x\in I$. 

By definition, $\tilde \Phi (\lambda , \cdot )$ can be extended to an
entire function that satisfies $|\tilde \Phi (\lambda , z)| \leq
2 e^{\lambda |\Im z|}$. Therefore the vector-valued H\"older inequality
implies that $\tilde f$ also extends to a function on $\bc $ obeying the
growth estimate 
\begin{equation}\label{eq:4}
\abs{\tilde f(z)} \leq \norm{F}_{L^1([0,\Omega], d\rho)} \norm{\Phi(\cdot,
  z)}_{L^\infty([0,\Omega], d\rho)}\leq   C e^{\sqrt{ \frac \Omega
    {p_0}  }\abs{\Im z}} \, 
\end{equation}
for  $z \in \bc$.
Clearly, the restriction of $\tilde f$  to $\br $ is bounded, and the
parameter integral $z\mapsto \tilde f(z)$ is analytic in 
$z$ (use Morera's theorem). Therefore $\tilde f$ belongs to the
Bernstein space $B_{\sqrt{\Omega / p_0}}$ and $\tilde f| _I = f |_I$,
as claimed. 
\end{proof}

\section{A Toy Example}
\label{sec:model-case}
In this section we assume that    $\Lambda=[0,\Omega] \subseteq \br^+$ and
 study the  special case 
\[
p(x)= \begin{cases}
p_-, &\quad x\leq 0\\
p_+, &\quad x >0 \,.
\end{cases}
\]
Our point is that all formulas are explicit and thus may help build
the reader's intuition for variable \bw . 

Using the continuity of solutions $\phi$ of
the differential equation
$(\tau_p-z)\phi =0$
and the continuity of $p(x)\phi'(x)$ at $x=0$, 
we obtain the linearly independent solutions
\[
\phi_+(z,x)=\begin{cases}
\frac{1}{2} \Big(1+ \sqrt{\frac{p_+}{p_-}}\Big) e^{i  \sqrt{ z /{p_-}} x}+ \frac{1}{2} \Big(1- \sqrt{\frac{p_+}{p_-}}\Big) e^{-i \sqrt{z/p_-} x}, \quad & x \leq 0,\\
e^{i  \sqrt{z/p_+} x}, \quad & x>0 \;,\\
\end{cases}
\]
\[
\phi_-(z,x)=\begin{cases}
e^{-i  \sqrt{z/p_-} x}, \quad & x \leq 0,\\
\frac{1}{2} \Big(1+ \sqrt{\frac{p_-}{p_+}}\Big) e^{-i  \sqrt{z/p_+} x}+  \frac{1}{2} \Big(1- \sqrt{\frac{p_-}{p_+}}\Big)e^{i  \sqrt{z/p_+} x}, \quad & x>0 \;,\\
\end{cases}
\]
by a straightforward calculation. 
For $\Im z >0$ the solution $\phi_+$ lies right in $L^2(\br)$ and $\phi_-$ lies left in $L^2(\br)$.
Note that for real $z=\lambda \in \br $ and $x\leq 0$  the solution
$\phi _+$ can be written as
\[
\phi _+ (\lambda , x) = \cos \big( \sqrt{\tfrac\lambda { p_-}} x\big) + i
\sqrt{\tfrac{p_+}{p_-}} \sin  \big( \sqrt{\tfrac \lambda { p_-}} x\big) \, ,
\]
and similarly for $\phi _-$ for $x\geq 0$.

We can
derive the spectral measure  explicitly to be 
\begin{equation}
  \label{eq:22}
 d \rho(\lambda)=\frac{1}{\pi (\sqrt{p_-}+\sqrt{p_+})^2}
\begin{pmatrix}
\sqrt{p_-} & 0 \\
 0 & \sqrt{p_+}
\end{pmatrix}
\, \frac{d\lambda}{\sqrt{\lambda}} \,. 
\end{equation}
 The details of this computation are sketched  in  Appendix~A.

Using~\eqref{eq:45} and $\sinc (x) = \sin x /
x$,   a straightforward computation leads to the
the reproducing kernel 
\begin{equation}\label{eq:2}
  k(x,y) = 
\begin{cases}
      \frac {\Omega^{1/2}} {\pi \pmi}\sinc(\Omega^{1/2} \frac
      {x-y}{\pmi})- \frac{\pp-\pmi}{\pp+\pmi} \frac {\Omega^{1/2}}{\pi
        \pmi} \sinc({\Omega^{1/2}} \frac    {x+y}{\pmi})  \qquad &x, y
      \leq 0 \,,   \\ 
 \frac {\Omega^{1/2}} {\pi \pp}\sinc({\Omega^{1/2}} \frac {x-y}{\pp})+
 \frac{\pp-\pmi}{\pp+\pmi} \frac {\Omega^{1/2}}{\pi \pp}
 \sinc({\Omega^{1/2}} \frac    {x+y}{\pp}) \qquad& x, y > 0 \,,\\ 
\frac{2 {\Omega^{1/2}}}{\pi (\pp +\pmi)} {\sinc\big({\Omega^{1/2}}
  (\frac x {\pmi}- \frac{y} {\pp})\big)} \qquad& x\leq 0, y>0 \,,\\ 
\frac{2 \Omega^{1/2}}{  \pi (\pp+ \pmi)} \sinc\big({\Omega^{1/2}}(\frac x {\pp}- \frac{y} {\pmi})\big)   \qquad & x >0, y\leq 0 \,.
\end{cases}
\end{equation}
If $p_-=p_+=1$ we obtain the $\sinc$ kernel, as expected.

Direct inspection of~\eqref{eq:2} suggests a recipe to obtain an orthonormal basis 
of $PW_{[0,\Omega ]}(A_p)$. As a result we obtain  a sampling formula that
is similar to the cardinal series for \bdl\ functions.
\begin{thm}\label{thm:shanlike}
Fix the sampling set $\set{x_j =\frac{ \pi j\sqrt{p(j)} }{\Omega
    ^{1/2}}: j \in \bz } $ and the weights 
\begin{equation}
  \label{samplingweights}
  w_j=
  \begin{cases}
    \pmi\,,&\quad j <0\,, \\
    \pp\,, &\quad j >0\,, \\
    \frac 1 2 (\pp+\pmi)\,, &\quad j=0\,.
  \end{cases}
\end{equation}
Then  the set  $\{\sqrt{\frac{\pi w_j}{\Omega^{1/2}}}k(x_j,\cdot) \colon j
\in \bz\}$  forms an orthonormal basis  of $PW_{[0,\Omega]}(A_p)$. The orthogonal expansion
  \begin{equation}
    f(x) = \frac{\pi} {\Omega^{1/2}} \sum_{j \in \bz}{w_j} f(x_j) {k(x_j,x )}
  \end{equation}
converges  in $L^2(\br)$ and uniformly for every $f \in PW_{[0,\Omega]}(A_p)$. 
\end{thm}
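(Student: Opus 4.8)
The plan is to verify directly that the candidate family $e_j := \sqrt{\pi w_j/\Omega^{1/2}}\, k(x_j,\cdot)$ is orthonormal and complete in $PW_{[0,\Omega]}(A_p)$, and then read off the expansion from the reproducing kernel property. Since $PW_{[0,\Omega]}(A_p)$ is a reproducing kernel Hilbert space with kernel $k$ by Proposition~\ref{prop:basicpw}, we have $\langle k(x_i,\cdot),k(x_j,\cdot)\rangle = k(x_i,x_j)$, so the orthonormality claim reduces to the explicit identity
\[
\frac{\pi}{\Omega^{1/2}}\sqrt{w_i w_j}\, k(x_i,x_j) = \delta_{ij}\,,
\]
which is a computation using the four cases of formula~\eqref{eq:2}. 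The key observation is that on each half-axis the sampling points are $x_j = \pi j \sqrt{p_\pm}/\Omega^{1/2}$, so that $\Omega^{1/2}(x_i-x_j)/\sqrt{p_\pm} = \pi(i-j)$ and the diagonal $\sinc$ terms give $\sinc(\pi(i-j)) = \delta_{ij}$; the anti-diagonal terms $\sinc(\Omega^{1/2}(x_i+x_j)/\sqrt{p_\pm}) = \sinc(\pi(i+j))$ vanish whenever $i\neq -j$, and when $i=-j\neq 0$ one checks that the two contributions from $j>0$ and $j<0$ are set up precisely so the weight factors $w_j$ and the sign $\pm(\pp-\pmi)/(\pp+\pmi)$ conspire to cancel. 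For the mixed case $x_i \le 0 < x_j$ one gets $\sinc(\pi(i-j))$ again, and $i\neq j$ is automatic since the indices lie in disjoint halves (with $j=0$ handled by the averaged weight $w_0 = \tfrac12(\pp+\pmi)$). I would organize this as a short case analysis and relegate the arithmetic to a remark or to Appendix~A, since it is mechanical once the substitutions are made.

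For completeness, the cleanest route is to pass to the spectral side via the unitary transform $\mF_{A_p}$ of Section~\ref{specth}: under $\mF_{A_p}$ the space $PW_{[0,\Omega]}(A_p)$ is identified with $L^2([0,\Omega],d\rho)$, and $k(x_j,\cdot)$ is mapped to the vector-valued function $\lambda\mapsto \overline{\Phi(\lambda,x_j)}$ restricted to $[0,\Omega]$ (this is immediate from~\eqref{eq:45} and~\eqref{eq:c21}). Using the explicit $\phi_\pm$ and the diagonal spectral measure~\eqref{eq:22}, the functions $\Phi(\lambda,x_j)$ become, after the substitution $\lambda = \omega^2$, a rescaled trigonometric system $\{e^{\pm i\omega\pi j}\}$ on $[0,\Omega^{1/2}]$ against the measure $\tfrac{1}{\pi(\pmi+\pp)^2}\operatorname{diag}(\pmi,\pp)\,2\,d\omega$; the classical fact that $\{e^{i\pi j\omega}\}_{j\in\bz}$ is an orthonormal basis of $L^2([-\Omega^{1/2},\Omega^{1/2}])$ (after normalization) then transfers to orthonormality and completeness of $\{e_j\}$. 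Completeness can alternatively be argued abstractly: if $f\perp e_j$ for all $j$ then $f(x_j)=0$ for all $j$, and since the gap sequence $x_{j+1}-x_j = \tfrac{\pi}{\Omega^{1/2}}\sqrt{p(j)}$ satisfies the hypothesis~\eqref{eq:c2} of Theorem~\ref{tm1} with $\delta = \pi/\Omega^{1/2}$ at the borderline (one must check the sampling set is admissible, e.g. by a limiting argument or by the explicit frame inequality the toy example already gives), the lower frame bound forces $f=0$; however the spectral-side argument is more honest here because~\eqref{eq:c2} is a strict inequality.

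Given orthonormality and completeness, the expansion is just Parseval: for $f\in PW_{[0,\Omega]}(A_p)$,
\[
f = \sum_{j\in\bz}\langle f,e_j\rangle e_j = \sum_{j\in\bz}\frac{\pi w_j}{\Omega^{1/2}}\,\langle f,k(x_j,\cdot)\rangle\, k(x_j,\cdot) = \frac{\pi}{\Omega^{1/2}}\sum_{j\in\bz} w_j f(x_j)\, k(x_j,\cdot)\,,
\]
where the reproducing property $\langle f,k(x_j,\cdot)\rangle = f(x_j)$ was used. Convergence in $L^2$ is automatic from the orthonormal basis expansion; uniform convergence follows by estimating the tail $\|\sum_{|j|>N}\langle f,e_j\rangle e_j\|$ in the reproducing kernel norm and using that pointwise evaluation is bounded, i.e. $|g(x)| \le \sqrt{k(x,x)}\,\|g\|_2$ with $\sup_x k(x,x) < \infty$ (visible from~\eqref{eq:2}, since each diagonal value is a bounded function of $x$). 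The main obstacle is the orthonormality computation: it looks like four routine $\sinc$ evaluations, but the off-axis case $i=-j$ is genuinely the content of the theorem, and getting the weights $w_j$ and the reflection coefficient to cancel requires keeping careful track of which half-axis formula applies to $x_i$ versus $x_j$ — an error in this bookkeeping is the easiest way the argument breaks.
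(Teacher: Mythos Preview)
Your plan matches the paper's proof: orthonormality via the explicit kernel~\eqref{eq:2} and the reproducing identity $\langle k(x_i,\cdot),k(x_j,\cdot)\rangle = k(x_i,x_j)$, completeness on the spectral side via $\mF_{A_p}$ and the diagonal measure~\eqref{eq:22}, and the expansion plus uniform convergence from the bounded diagonal of $k$.

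Two places where your sketch is looser than what the argument actually needs. First, the case $i=-j\neq 0$ in the orthogonality check is a red herring: if $i$ and $j$ have opposite signs you are in the mixed-case line of~\eqref{eq:2}, which contains only a $\sinc(\pi(i-j))$ term and no $\sinc(\pi(i+j))$ term, so there is nothing to cancel. The anti-diagonal $\sinc(\pi(i+j))$ terms appear only in the same-sign cases, where $i+j\neq 0$ automatically.

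Second, and more substantively, on the spectral side the images $\Phi(\cdot,x_j)$ are \emph{not} a pure trigonometric system $\{e^{\pm i\omega\pi j}\}$ in both components: for $j>0$ the second component $\phi_-(\lambda,x_j)$ is a genuine $\cos/\sin$ combination with the coefficient $\sqrt{p_-/p_+}$, and symmetrically for $j<0$. So the ``classical fact'' about exponential bases does not apply directly. The paper's way through this is to write out the orthogonality conditions $\langle F,\Phi(\cdot,x_j)\rangle_{L^2(\Lambda,d\rho)}=0$ explicitly, re-index the $j<0$ equations as $j\mapsto -j$, and then add and subtract the resulting pair of equations for each $j>0$. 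This decouples the system into a pure Fourier cosine condition on $\pmi F_1(\omega^2)+\pp F_2(\omega^2)$ and a pure Fourier sine condition on $F_1(\omega^2)-F_2(\omega^2)$, after which completeness of the cosine and sine systems on $[0,\Omega^{1/2}]$ forces $F_1=F_2=0$. Your alternative via Theorem~\ref{tm1} is, as you note yourself, not available at the borderline $\delta=\pi/\Omega^{1/2}$.
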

\begin{proof}
The orthogonality follows directly from  $\langle k(x_i,
\cdot ) , k(x_j,\cdot )\rangle = k(x_i,x_j)$ and the formulas for the
reproducing kernel~\eqref{eq:2}. For $i=j$ we obtain $\langle k(x_j,
\cdot ) , k(x_j,\cdot )\rangle = k(x_j,x_j)= \tfrac{\Omega ^{1/2}}{\pi
  w_j}=\tfrac{\Omega ^{1/2}}{\pi \sqrt{p_{\pm}} }$, and thus  
$\{\sqrt{\frac{\pi w_j}{\Omega^{1/2}}}k(x_j,\cdot) : j\in \bz \}$
is  an orthonormal set. 

To prove the completeness of the orthogonal system, assume that $f \in
PW_{\Lambda}(A_p)$ and $\langle f, k(x_j,\cdot  )\rangle = 0$ for all
$j\in \bz $. We have to show that $f \equiv 0$.

Using the unitarity of $\mF _{A_p}$, this is equivalent to proving that  
$F = (F_1, F_2)  \in L^2(\Lambda, d\rho) $  and 
$\inprod{ F, \Phi(\cdot,x_j,)}_{L^2(\Lambda, d\rho)} = 0 $
 for all $j \in \bz$  implies $F \equiv 0$.
 
We  substitute the fundamental solutions in the inner product and
make the  change of
variables $\lambda = \omega ^2, d\lambda / \sqrt{\lambda } = 2 d\omega
$; then the vanishing of  $\inprod{ F, \Phi(\cdot,x_j,)} = 0$ amounts  to
the following conditions: 
    \begin{equation*}\label{eq:23}
    \int_{0}^{\Omega^{1/2}}\big(\pmi F_1(\omega^2)+ \pp
    F_2(\omega^2)\big) \cos \frac{\pi j \omega}{{
        \Omega ^{1/2} }}\;d\omega  
+ i \pp \int_{0}^{\Omega^{1/2}}\big( F_1(\omega^2)- F_2(\omega^2) \big)  \sin \frac{\pi  j \omega}{{\Omega^{1/2}}}\;d\omega  =0, \quad j <0 \,,
\end{equation*}
\begin{equation*}\label{eq:24}
\int_{0}^{\Omega^{1/2}}\big(\pmi F_1(\omega^2)+ \pp F_2(\omega^2)\big) \cos \frac{\pi j \omega}{{\Omega^{1/2}}}\;d\omega 
+ i \pmi \int_{0}^{\Omega^{1/2}}\big( F_1(\omega^2)- F_2(\omega^2) \big)  \sin \frac{\pi j \omega}{{\Omega^{1/2}}}\;d\omega  =0, \quad j \geq 0 \,,
  \end{equation*}
If we re-index the first equation 
($j \to -j$) we obtain
\begin{equation*}
  \label{eq:25}
   \int_{0}^{\Omega^{1/2}}\big(\pmi F_1(\omega^2)+ \pp F_2(\omega^2)\big) \cos \frac{\pi j \omega}{{\Omega^{1/2}}}\;d\omega 
- i \pp \int_{0}^{\Omega^{1/2}}\big( F_1(\omega^2)- F_2(\omega^2) \big)  \sin \frac{\pi j \omega}{{\Omega^{1/2}}}\;d\omega  =0, \quad j >0 \;.
\end{equation*}
 Adding and subtracting the above equations yields
  \begin{align}
    \label{eq:26}
    &\int_{0}^{\Omega^{1/2}}\big(\pmi F_1(\omega^2)+ \pp
    F_2(\omega^2)\big) \cos \frac{\pi j \omega}{{
        \Omega ^{1/2}}}\;d\omega =0, \quad j\geq 0 \,, \\ 
    \label{eq:27}
  &  \int_{0}^{\Omega^{1/2}}\big( F_1(\omega^2)- F_2(\omega^2) \big)  \sin \frac{\pi j \omega}{{\Omega^{1/2}}}\;d\omega  =0, \quad j >0 \;.
  \end{align}
Equation \eqref{eq:26} describes  the Fourier cosine coefficients of
the even continuation of the function   $\omega \mapsto \pmi
F_1(\omega^2)+ \pp F_2(\omega^2)$  to $[-{\Omega^{1/2}}, {
  \Omega^{1/2}}]$. Consequently,  from  the uniqueness of Fourier cosine
series we obtain  $ \pmi
F_1(\omega^2)+ \pp F_2(\omega^2)  = 0$ on $[0, {
  \Omega^{1/2}}]$. Likewise, \eqref{eq:27} with an odd extension of the
integrand,   the uniqueness 
theorem for  Fourier sine series yields  
$    F_1(\omega^2)- F_2(\omega^2)  =0 $
on $[0,{\Omega^{1/2}}]$. Combining these two results and substituting
back we obtain $F_1=F_2=0$ on $[0,\Omega]$. 

We have proved that  the set   $\{\sqrt{\frac{\pi w_j}{{
      \Omega^{1/2}}}} k(x_j,\cdot) :  j 
\in \bz\}$ is an orthonormal basis  of $PW_{\Lambda}(A_p)$.  Therefore every  $f \in
PW_\Lambda(A_p)$ possesses the orthonormal  expansion
\begin{equation*}
f= \sum_{j\in \bz} \frac{\pi w_j}{{\Omega^{1/2}}}\inprod{f, k
    (x_j,\cdot)} k(x_j,\cdot)= {\frac{\pi}{{\Omega^{1/2}}}}\sum_{j\in
  \bz} w_j f(x_j)  k (x_j,\cdot)  
\end{equation*}
with convergence in norm. In a reproducing kernel Hilbert space, the
$L^2$-convergence  implies pointwise convergence. Since  the family
$\set{k(x_j,\cdot) \colon j \in \bz}$ is norm-bounded,  it also implies
uniform convergence  (see~\cite[3.1]{Nashed91}). 
\end{proof}
\begin{rem}
 It would be of great interest to construct an orthonormal basis of
 reproducing kernels  and  a corresponding  similar sampling theorem
 for more general control functions $p$. 
\end{rem}

Let us now consider  the case $p_- \to \infty$, $p_+=1$. 
Proceeding formally from \eqref{eq:2}, the reproducing kernel of $PW_{[0,\Omega]}(A_p)$ is
\[
k(x,y)=
\begin{cases}
\frac{{\Omega^{1/2}}}{\pi} [ \sinc({\Omega^{1/2}}  (x-y))- \sinc
({\Omega^{1/2}}  (x+y))]\,,& \quad x, y \geq 0, \\ 
0 \,,&\quad \text{else .}
\end{cases}
\]
We see that every $f\in PW_{[0,\Omega]}(A_p)$ has its support in
$[0,\infty )$. For a more concrete interpretation of
$PW_{[0,\Omega]}(A_p)$ let $g$ be the extension of $f$ to an odd
function on $\br $. Then $f\in PW_{[0,\Omega]}(A_p)$ holds, \fif\ 
$$
f(x) = \int _0^\infty f(y) k(x,y) \, dy = \int _{\br } g(y)
\sinc({\Omega^{1/2}}  (x-y)) \, dy \,  \quad x \geq 0 \, . 
$$
This means that $f$ can be interpreted as  the restriction of an odd \bdl\
function $g$ with $\supp \, \hat{g} \subseteq [-\Omega ^{1/2},
\Omega ^{1/2}]$ to $[0,\infty )$.

Theorem~\ref{thm:shanlike}  then yields  the following sampling theorem. 
\begin{prop}\label{cor:st}
 If $p(x)=\infty$ for $x \leq{0}$ and $p(x)=1$ for
 $x>0$,  the following sampling formula holds for $f \in
 PW_{[0,\Omega]}(A_p)$  
\begin{equation*} 
    f(x) = \frac{\pi} {\Omega^{1/2}} \sum_{j =1}^\infty f(\frac{\pi j}{{\Omega^{1/2}} }) {k(\frac{\pi j}{{\Omega^{1/2}} },x )} = \sum_{j=1}^\infty (-1)^jf(\frac{\pi j}{{\Omega^{1/2}} }) \sin({\Omega^{1/2}} x) \frac{2 \pi j}{({\Omega^{1/2}} x)^2- (\pi j)^2} \,.
  \end{equation*}
\end{prop}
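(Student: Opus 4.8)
The plan is to sidestep the formal limit $p_-\to\infty$ entirely and argue directly from the characterization established just above the statement: $f\in PW_{[0,\Omega]}(A_p)$ precisely when $f$ vanishes on $(-\infty,0)$ and its odd extension $g$ to $\br$ is a classical $L^2$-\bdl\ function with $\supp\hat g\subseteq[-\Omega^{1/2},\Omega^{1/2}]$. So I would fix such an $f$, pass to $g$, and invoke the classical Shannon--Whittaker--Kotelnikov sampling theorem at the critical rate for \bw\ $\Omega^{1/2}$, i.e.\ on the lattice $\tfrac{\pi}{\Omega^{1/2}}\bz$:
\[
g(x)=\sum_{j\in\bz}g\Bigl(\tfrac{\pi j}{\Omega^{1/2}}\Bigr)\,\sinc\bigl(\Omega^{1/2}x-\pi j\bigr),
\]
with convergence in $L^2(\br)$ and uniformly on $\br$ (the uniformity by the same reproducing-kernel argument used at the end of the proof of Theorem~\ref{thm:shanlike}).

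Next I would symmetrize using that $g$ is odd: the $j=0$ term drops since $g(0)=0$, and grouping the $j$-th and $(-j)$-th terms via $g(-\tfrac{\pi j}{\Omega^{1/2}})=-g(\tfrac{\pi j}{\Omega^{1/2}})$ gives, for $x\ge 0$ (where $g=f$),
\[
f(x)=\sum_{j=1}^\infty f\Bigl(\tfrac{\pi j}{\Omega^{1/2}}\Bigr)\Bigl[\sinc\bigl(\Omega^{1/2}x-\pi j\bigr)-\sinc\bigl(\Omega^{1/2}x+\pi j\bigr)\Bigr].
\]
The regrouping is harmless because the symmetric partial sums over $\abs{j}\le N$ already converge uniformly. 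Comparing the bracketed expression with the reproducing kernel computed above — which for $x,y\ge0$ equals $\tfrac{\Omega^{1/2}}{\pi}\bigl[\sinc(\Omega^{1/2}(x-y))-\sinc(\Omega^{1/2}(x+y))\bigr]$, evaluated at $y=\tfrac{\pi j}{\Omega^{1/2}}$ — identifies the bracket with $\tfrac{\pi}{\Omega^{1/2}}k(\tfrac{\pi j}{\Omega^{1/2}},x)$, which is the first displayed identity; this also matches Theorem~\ref{thm:shanlike} in the degenerate limit, where the weights are $w_j=1$ and the nodes $x_j$ with $j\le 0$ have escaped to $-\infty$.

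Finally, for the closed form I would use $\sin(\theta\mp\pi j)=(-1)^j\sin\theta$, so that $\sinc(\Omega^{1/2}x\mp\pi j)=(-1)^j\sin(\Omega^{1/2}x)/(\Omega^{1/2}x\mp\pi j)$ and hence
\[
\sinc\bigl(\Omega^{1/2}x-\pi j\bigr)-\sinc\bigl(\Omega^{1/2}x+\pi j\bigr)=(-1)^j\sin(\Omega^{1/2}x)\,\frac{2\pi j}{(\Omega^{1/2}x)^2-(\pi j)^2},
\]
and substituting into the previous display gives the second formula; the apparent poles at $x=\tfrac{\pi k}{\Omega^{1/2}}$ are removable since $\sin(\Omega^{1/2}x)$ vanishes there. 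There is no real obstacle in this argument: the only points needing care are that the odd extension $g$ genuinely lies in the classical $L^2$ Paley--Wiener space (already recorded above) and that convergence survives the symmetrization, which it does since each partial sum involves only a finite rearrangement.
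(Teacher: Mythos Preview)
Your argument is correct and follows essentially the same route as the paper: both bypass the limit $p_-\to\infty$, invoke the characterization of $f$ as the restriction of an odd classical \bdl\ function, apply the Shannon sampling theorem on $\tfrac{\pi}{\Omega^{1/2}}\bz$, and use oddness to reduce to $j\ge 1$. Your version is in fact more detailed than the paper's sketch, carrying out the identification with $k$ and the trigonometric simplification explicitly.
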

We do not give a formal proof for the limiting procedure. The
corollary follows directly from the observation that $f \in PW_{[0,\Omega]}(A_p)$, \fif\ 
$f$ is the restriction of an odd function $g$  with $\supp \hat{g}
\subseteq [-\Omega ^{1/2}, \Omega ^{1/2} ]$ to $\br ^+$. Thus 
\begin{align*}
f(x) &=  \tfrac{\pi }{\Omega ^{1/2}}\,
\sum _{j\in \bz} f\big( \tfrac{\pi j}{\Omega ^{1/2}}   \big)
k\big(\tfrac{\pi j}{\Omega ^{1/2}}, x \big) \\
&= \tfrac{\pi }{\Omega ^{1/2}}\, \sum _{j = 1} ^\infty  f\big(
\tfrac{\pi j}{\Omega ^{1/2} }   \big) \Big( \mathrm{sinc}\, \big(
\Omega ^{1/2} (x -\tfrac{\pi j }{\Omega ^{1/2}}) - \mathrm{sinc}\,
\big( \Omega ^{1/2}
(x+\tfrac{\pi j }{\Omega ^{1/2}}) \big) \Big) \, .
  \end{align*}

%

\section{Nonuniform  Sampling }
\label{sec:irregular-sampling-}
In this section we assume that  $\Lambda \subseteq [0,\Omega] \subset \br^+$. Based on the method
developed  
in~\cite{Groch92,grirrsstft},  we derive a
sampling theorem  and reconstruction procedures for $\PW \Lambda$. 
All constants and error estimates are explicit and highlight the role
of the parametrizing function $p$. 

Given a set $X=\set{x_i \colon i \in \bz} \subseteq \br $, we   denote by
$
y_i= \frac 12 (x_i+ x_{i+1})  , i\in \bz \, , 
$
the midpoints of  $X$, and set $\chi_i=c_{[y_{i-1},y_i)}$.  
Then the functions $\chi_i$ form a partition of unity.            
 We also  set $I_i'=[y_{i-1},x_i)$ and $I_i''=[x_i, y_{i})$.
 Let $\delta $ be the maximum gap between 
consecutive sampling points weighted by the parametrizing function
$p$, namely
\begin{equation} \label{eq:c24}
\delta =\sup_{i\in \bz }  \frac{x_{i+1}-x_i} {\inf_{x \in
    [x_i,x_{i+1}]} \sqrt{p(x)}} \, .
\end{equation}

We first derive a fundamental inequality for functions  in  $PW_\Lambda (A_p)$. 
For the proof we need Wirtinger's inequality, see, e.g., \cite{inequalities}.
If $f,f' \in L^2([a,b])$ and either $f(a)=0$ or $f(b)=0$, then
\begin{equation}
  \label{eq:46}
  \int_a^b \abs{f(x)}^2 dx \leq \frac 4 {\pi^2} (b-a)^2 \int_a^b\abs{f'(x)}^2 dx \,.
\end{equation}

\begin{lem} \label{l:pp}
Let $\Lambda \subseteq [0,\Omega] \subset \br ^+$ and assume that
$\inf _{x\in  \br} p(x) >0 $. If $\delta $ is finite, then for all
$f\in PW _\Lambda (A_p)$  we have 
\begin{equation}
  \label{eq:c25}
    \bignorm{f-\sum_{i \in \bz} f(x_i) \chi_i }^2 \leq \frac{\delta ^2
    \Omega}{\pi ^2} \norm{f}^2 \, .
\end{equation}
Consequently, $PW_\Lambda (A_p)$ satisfies  a 
Plancherel-Polya inequality of the form
\begin{equation} \label{eq:c26}
  \sum_{i \in \bz} \abs{f(x_i)}^2\frac{x_{i+1}-x_{i-1}}{2} \leq \big( 1+ \frac{\delta 
    \Omega ^{1/2}}{\pi } \Big)^2  \norm{f}^2 \, , \qquad   f\in
  PW_\Lambda (A_p)   \, .
\end{equation}
\end{lem}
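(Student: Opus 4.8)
The plan is to prove the oscillation estimate \eqref{eq:c25} first, and then derive the Plancherel--Polya bound \eqref{eq:c26} from it by a triangle-inequality argument.

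\textbf{Step 1: the fundamental inequality \eqref{eq:c25}.} Fix $f\in PW_\Lambda(A_p)$. Since the $\chi_i$ form a partition of unity supported on disjoint intervals $[y_{i-1},y_i)$, I can write
\[
\bignorm{f-\sum_{i\in\bz}f(x_i)\chi_i}^2 = \sum_{i\in\bz}\int_{y_{i-1}}^{y_i}\abs{f(x)-f(x_i)}^2\,dx \,.
\]
On each interval I split at the sampling point $x_i$, writing the integral over $I_i'=[y_{i-1},x_i)$ and $I_i''=[x_i,y_i)$ separately. On each of these sub-intervals the function $x\mapsto f(x)-f(x_i)$ vanishes at the endpoint $x_i$, so Wirtinger's inequality \eqref{eq:46} applies; its hypothesis $f,f'\in L^2$ locally is satisfied because functions in $PW_\Lambda(A_p)$ are continuous (Proposition~\ref{prop:basicpw}) and, as I will note in Step~2, have $f'\in L^2_{loc}$. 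This gives
\[
\int_{I_i'}\abs{f(x)-f(x_i)}^2\,dx \leq \frac{4}{\pi^2}(x_i-y_{i-1})^2\int_{I_i'}\abs{f'(x)}^2\,dx
\]
and likewise on $I_i''$ with $(y_i-x_i)^2$. Since $x_i-y_{i-1}\le \tfrac12(x_i-x_{i-1})$ and $y_i-x_i\le\tfrac12(x_{i+1}-x_i)$, both prefactors are bounded by $\tfrac14(\text{gap})^2$; more precisely, pulling in the weight $\sqrt{p}$ I estimate $(x_i-y_{i-1})^2\le \tfrac14\delta^2\,\inf_{[x_{i-1},x_i]}p$ and similarly for $I_i''$. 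Summing over $i$ and combining the two half-intervals that meet at each sampling point, the contribution is controlled by $\tfrac{\delta^2}{\pi^2}\int_\br p(x)\abs{f'(x)}^2\,dx$.

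\textbf{Step 2: the energy estimate $\int p\abs{f'}^2\le\Omega\norm{f}^2$.} This is where the operator $A_p$ enters. For $f\in PW_\Lambda(A_p)\subseteq \mD(A_p^{1/2})$, integration by parts gives the quadratic form identity $\int_\br p(x)\abs{f'(x)}^2\,dx = \langle A_p f,f\rangle = \norm{A_p^{1/2}f}^2$ (the boundary terms vanish since $f,pf'\in L^2$). By the spectral theorem, since $\supp\mF_{A_p}f\subseteq\Lambda\subseteq[0,\Omega]$, we have $\norm{A_p^{1/2}f}^2=\int_\Lambda\lambda\,\abs{\mF_{A_p}f(\lambda)}^2\,d\rho(\lambda)\le\Omega\norm{f}^2$; equivalently this is Bernstein's inequality (Proposition~\ref{prop:charbw}(iv)) for $k=\tfrac12$ applied to $A_p^{1/2}$, or just the $k=1$ case $\norm{A_pf}\le\Omega\norm f$ combined with $\norm{A_p^{1/2}f}^2=\langle A_pf,f\rangle\le\norm{A_pf}\norm f$. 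Substituting into Step~1 yields \eqref{eq:c25}. (This step also retroactively justifies $f'\in L^2_{loc}$ used in Step~1, since $A_p^{1/2}f\in L^2$ and $p$ is bounded below forces $f'\in L^2(\br)$.)

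\textbf{Step 3: from \eqref{eq:c25} to \eqref{eq:c26}.} Note $\norm{\sum_i f(x_i)\chi_i}^2=\sum_i\abs{f(x_i)}^2(y_i-y_{i-1})=\sum_i\abs{f(x_i)}^2\tfrac{x_{i+1}-x_{i-1}}{2}$, since the $\chi_i$ have disjoint supports of length $y_i-y_{i-1}=\tfrac12(x_{i+1}-x_{i-1})$. By the triangle inequality in $L^2$,
\[
\Bignorm{\sum_i f(x_i)\chi_i} \leq \norm{f} + \Bignorm{f-\sum_i f(x_i)\chi_i} \leq \Big(1+\frac{\delta\Omega^{1/2}}{\pi}\Big)\norm{f}\,,
\]
using \eqref{eq:c25}. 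Squaring gives \eqref{eq:c26}.

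\textbf{Main obstacle.} The only genuinely non-routine point is Step~2: identifying $\int p\abs{f'}^2$ with the quadratic form of $A_p$ and controlling it spectrally by $\Omega\norm f^2$. One must be a little careful that $f\in PW_\Lambda(A_p)$ lies in the form domain $\mD(A_p^{1/2})$ and that the integration by parts has no boundary contribution at $\pm\infty$; this follows from $f,pf'\in AC_{loc}\cap L^2$ and a standard limiting argument using that $f(x),\ (pf')(x)$ cannot both fail to decay. Everything else --- the partition-of-unity bookkeeping, Wirtinger, the triangle inequality --- is elementary.
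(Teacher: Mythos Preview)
Your proof is correct and follows essentially the same route as the paper: split the oscillation into half-intervals at each $x_i$, apply Wirtinger's inequality, insert the weight $p$ to convert $\int|f'|^2$ into $\int p|f'|^2$, and then use $\int p|f'|^2=\langle A_pf,f\rangle\le\Omega\|f\|^2$; the Plancherel--Polya bound \eqref{eq:c26} then follows by the triangle inequality exactly as you do. The only cosmetic difference is that the paper factors out $\tfrac{(x_i-y_{i-1})^2}{\min_{I_i'}p}$ and takes a supremum, whereas you bound $(x_i-y_{i-1})^2\le\tfrac14\delta^2\inf_{[x_{i-1},x_i]}p$ directly --- both yield the same constant, and your discussion of the form-domain issue in Step~2 is if anything more careful than the paper's.
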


\begin{proof}
  The proof is an adaption of the proof of ~\cite[Thm.~1]{Groch92}. We
  rewrite the expression \eqref{eq:c25} as follows: 
 \begin{align} 
          \bignorm{f-\sum_{i \in \bz} f(x_i) \chi_i }^2 
       =& \bignorm{\sum_{i \in \bz}(f- f(x_i)) \chi_i }^2 = \sum_{i
         \in \bz}\int_{y_{i-1}}^{y_{i}}\abs{f(x)- f(x_i)}^2 dx \notag \\ 
       =& \sum_{i \in \bz}\Bigg(\int_{y_{i-1}}^{x_i}\abs{f(x)-
         f(x_i)}^2 dx + \int_{x_i}^{y_{i}}\abs{f(x)- f(x_i)}^2 dx
       \Bigg) \,.  \label{eq: Wirtinger}
     \end{align}
By Wirtinger's inequality ~\eqref{eq:46} this can be estimated further as
\begin{align*}%
 \bignorm{f-\sum_{i \in \bz} f(x_i) \chi_i }^2    \leq& \frac{4}{\pi^2}\sum_{i \in \bz}
 \Big[(x_i-y_{i-1})^2\int_{y_{i-1}}^{x_i}\abs{f'(x)}^2 dx + (y_i-x_i)^2\int_{x_i}^{y_{i}}\abs{f'(x)}^2 dx \Big]\\
\leq& \frac{4}{\pi^2}\sum_{i \in \bz}
 \Big[\frac{(x_i-y_{i-1})^2}{\min_{x \in I_i'} p(x)}\int_{y_{i-1}}^{x_i}p(x)\abs{f'(x)}^2 dx + \frac{(y_i-x_i)^2}{\min_{x \in I_i''} p(x)}\int_{x_i}^{y_{i}}p(x)\abs{f'(x)}^2 dx \Big]\\
\leq & \frac{4}{\pi^2}\sup_{i \in \bz} \max \Big(\frac{(x_i-y_{i-1})^2}{\min_{x \in I_i'} p(x)},\frac{(y_i-x_i)^2}{\min_{x \in I_i''} p(x)} \Big)
\sum_{i \in \bz} \int_{y_{i-1}}^{y_i}  p(x)\abs{f'(x)}^2 dx \\
= & \frac{4}{\pi^2}\sup_{i \in \bz} \Big(\frac{(x_i-y_{i-1})^2}{\min_{x \in I_i'} p(x)},\frac{(y_i-x_i)^2}{\min_{x \in I_i''} p(x)} \Big)
 \int_{\br}  p(x)\abs{f'(x)}^2 dx \\ 
\leq &\frac{1}{\pi^2}\sup_{i \in \bz}\max\frac{(y_i-y_{i-1})^2}{\min_{x \in [x_{i-1},x_i)} p(x)}  \int_{\br}  p(x)\abs{f'(x)}^2 dx \,.
\end{align*}
As $f \in \PW \Lambda \subseteq \mD (A_p)$ we can simplify the last term
by  using integration by parts and then apply Bernstein's inequality
(Proposition~\ref{prop:charbw} (iv)). 
\begin{equation}
  \label{eq:100}
  \int_{\br}  p(x)\abs{f'(x)}^2 dx=\inprod{A_p f, f} \leq
  \norm{A_pf}\norm{f} \leq \Omega \, \norm{f}^2 \, .
\end{equation}
The decisive modification was to smuggle in the parametrizing function
$p$ to obtain $\int p |f'|^2  $ and then to apply Bernstein's
inequality.

In conclusion  we obtain
\begin{equation}\label{eq:8}
 \bignorm{f-\sum_{i \in \bz} f(x_i) \chi_i }^2 \leq
 \frac{1}{\pi^2}\sup_{i \in \bz}\frac{(x_i-x_{i-1})^2}{\min_{x \in
     [x_{i-1},x_i)} p(x)} \, \Omega \, \norm{f}^2 = \frac{\delta ^2
   \Omega}{\pi ^2} \, \norm{f}^2 \,  
   \end{equation}
for $f\in PW_\Lambda (A_p)$, and \eqref{eq:c26} follows. 
\end{proof}

The fundamental inequality implies immediately a  sampling
theorem for $PW _\Lambda (A_p)$. 

\begin{thm}[Sampling inequality] \label{maxgap}
Let $\Lambda \subseteq [0,\Omega] \subseteq \br ^+_0$ and assume that
$\inf _{x\in  \br} p(x) >0 $. If 
\begin{equation}
  \label{eq:48}
\delta=\sup_{i\in \bz }  \frac{x_{i+1}-x_i} {\inf_{x \in
    [x_i,x_{i+1}]} \sqrt{p(x)}} < \frac{\pi }{\Omega ^{1/2}}\, ,
  \end{equation}
then, for all $f \in \PW \Lambda$,
\begin{equation}\label{eq:71}
\Big( 1- \frac{\delta {\Omega^{1/2}}}{\pi} \Big)^2 \, \norm{f}^2 \leq
\sum_{i \in \bz}\frac{x_{i+1}-x_{i-1}}{2} \abs{f(x_i)}^2 \leq \Big( 1+
\frac{\delta {\Omega^{1/2}}}{\pi} \Big)^2 \, \norm{f}^2 \,. 
\end{equation}
 If, in addition,  $\inf _{i\in \bz } (x_{i+1} - x_i) = \gamma >0$
 ($X$ is separated), then $X$ is a set of stable sampling for $\PW
 \Lambda$ with lower bound $  \gamma(1-\frac{\delta \Omega ^{1/2}}{\pi })^2
 $. Equivalently, the set of reproducing kernels $\{ k(x,\cdot
 ): x\in X\}$ is a frame for $PW_\Lambda (A_p)$. 
\end{thm}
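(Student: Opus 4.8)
The theorem follows immediately from the fundamental estimate \eqref{eq:c25} of Lemma~\ref{l:pp}, so the plan is essentially bookkeeping. First I would note that the cut-offs $\chi_i=c_{[y_{i-1},y_i)}$ have pairwise disjoint supports with $\norm{\chi_i}^2=y_i-y_{i-1}=\tfrac12(x_{i+1}-x_{i-1})$, so that for every $f\in\PW{\Lambda}$
\[
  \bignorm{\sum_{i\in\bz}f(x_i)\chi_i}^2=\sum_{i\in\bz}\frac{x_{i+1}-x_{i-1}}{2}\,\abs{f(x_i)}^2 .
\]
Abbreviate $c=\delta\,\Omega^{1/2}/\pi$; the hypothesis \eqref{eq:48} says precisely that $c<1$, and \eqref{eq:c25} reads $\norm{f-\sum_i f(x_i)\chi_i}\le c\,\norm{f}$.

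The sampling inequality \eqref{eq:71} now drops out from two applications of the triangle inequality in $\Ltwo$: the reverse inequality gives $\norm{\sum_i f(x_i)\chi_i}\ge(1-c)\norm{f}$, the ordinary one gives $\norm{\sum_i f(x_i)\chi_i}\le(1+c)\norm{f}$; since $1-c>0$, both sides may be squared, and substituting the displayed identity yields exactly \eqref{eq:71}. Nothing further is needed here --- all the genuine work (Wirtinger's inequality, the trick of inserting $p$ into $\int_{\br}p\abs{f'}^2$, and Bernstein's inequality) is already contained in the proof of Lemma~\ref{l:pp}.

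For the last assertion I would use that $X$ being $\gamma$-separated forces $x_{i+1}-x_i\ge\gamma$ and $x_i-x_{i-1}\ge\gamma$, whence every weight obeys $\tfrac12(x_{i+1}-x_{i-1})\ge\gamma$; this lets one pass between the weighted sum in \eqref{eq:71} and the plain sum $\sum_i\abs{f(x_i)}^2$, so \eqref{eq:71} upgrades to an unweighted two-sided estimate $A\norm{f}^2\le\sum_{i\in\bz}\abs{f(x_i)}^2\le B\norm{f}^2$ with the constants as stated. Finally, since $\PW{\Lambda}$ is a reproducing kernel Hilbert space (Proposition~\ref{prop:basicpw}) and $\inprod{f,k(x_i,\cdot)}=f(x_i)$, this unweighted inequality is, by definition, the statement that $\set{k(x,\cdot)\colon x\in X}$ is a frame for $\PW{\Lambda}$. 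I do not see any serious obstacle; the only point that deserves a moment's attention is precisely this passage from the weighted to the unweighted inequality, i.e.\ the role of the separation hypothesis in keeping the weights $\tfrac12(x_{i+1}-x_{i-1})$ bounded away from $0$.
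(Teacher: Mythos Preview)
Your proposal is correct and follows essentially the same approach as the paper: identify $\bigl\lVert\sum_i f(x_i)\chi_i\bigr\rVert^2$ with the weighted sampling sum, then apply the triangle inequality together with the fundamental estimate \eqref{eq:c25} from Lemma~\ref{l:pp}. Your treatment of the final assertion (passing from the weighted to the unweighted inequality via the separation constant $\gamma$, and invoking the reproducing kernel property) is slightly more explicit than the paper's, but entirely in the same spirit.
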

\begin{proof}
 We use the triangle inequality and \eqref{eq:c25} to obtain 
 \begin{align*}
\sum _{i\in \bz } |f(x_i)|^2 \, \frac{x_{i+1}- x_{i-1}}{2} &= \norm{\sum _{i\in
    \bz} f(x_i ) \chi _i   }^2 \\
&\geq \Big(\norm{f} -     \norm{f-\sum_{i \in \bz} f(x_i) \chi_i }
\Big) ^2 \geq \Big(1 - \frac{\delta \Omega ^{1/2}}{\pi} \Big)^2 \,  \norm{f}^2
\, . 
 \end{align*}
The upper bound is already  in~\eqref{eq:c26}.
\end{proof}

Based on the sampling inequality \eqref{eq:71},  one may formulate several
 algorithms for the reconstruction of $f\in PW_\Lambda (A_p)$ from its
 samples $f(x_i), i\in \bz $. On the one hand one may use the manifold
 variations of the frame algorithm (iterative, accelerated iterations,
 or by means of a dual frame), on the other hand, one may use the
 following iterative algorithm from ~\cite{Groch92}. We set $P_\Lambda
 = c_\Lambda (A_p)$ for the orthogonal projection onto $PW_\Lambda
 (A_p)$.

\begin{thm} \label{thm:rec-alg}
Let $\Lambda \subseteq [0,\Omega] \subseteq \br ^+$ and assume that
$\inf _{x\in  \br} p(x) >0 $.  
Assume that  the sampling set $X$ satisfies the maximum gap condition \eqref{eq:48}.
Then  $f \in \PW \Lambda$ can be reconstructed from its  sampled  values $\big(f(x_i)\big)_{i \in \bz}$ 
by the following  algorithm.  

Initialization:  $  f_0 = h_0 =P_\Lambda \Bigl(\sum_{i \in \bz} f(x_i) \chi_i\Bigr) 
$.

Iteration: $h_{n+1}  = h_n- P_\Lambda \Bigl(\sum_{i \in \bz} h_n(x_i)
  \chi_i\Bigr)$ for $n\geq 0$.

Update: $f_{n+1}=\sum _{j=0}^{n+1} h_j = f_{n}+h_{n+1}$ for $n\geq
0$. 

Then 
\begin{equation}
  \label{eq:65}
  f= \lim _{n\to \infty } f_n = \sum_{n=0}^\infty h_n \,,
\end{equation}
with the error estimate 
\begin{equation}
  \label{eq:66}
  \norm{f-f_n} \leq \Big(\frac{\delta {\Omega^{1/2}}}{\pi}\Big)^{n+1}
  \frac{\pi+\delta {\Omega^{1/2}}}{\pi-\delta {\Omega^{1/2}}} \norm{f}
  \,. 
\end{equation}
\end{thm}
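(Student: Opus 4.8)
The plan is to recognize the three-step recursion as the sequence of partial sums of a Neumann series for a fixed operator. Introduce the bounded linear operator $T\colon PW_\Lambda(A_p)\to PW_\Lambda(A_p)$,
\[
Tf = P_\Lambda\Bigl(\sum_{i\in\bz} f(x_i)\,\chi_i\Bigr),
\]
which is well defined: the upper Plancherel--Polya bound \eqref{eq:c26} shows $\sum_{i}|f(x_i)|^2\,(y_i-y_{i-1})<\infty$, so $\sum_i f(x_i)\chi_i\in L^2(\br)$, and $P_\Lambda=c_\Lambda(A_p)$ then maps it back into $PW_\Lambda(A_p)$; the point evaluations $f(x_i)$ make sense because every element of $PW_\Lambda(A_p)$ is continuous (Proposition~\ref{prop:basicpw}).

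First I would show that $T$ is a small perturbation of the identity on $PW_\Lambda(A_p)$. Since $P_\Lambda$ has norm $1$ and fixes $PW_\Lambda(A_p)$, the fundamental inequality \eqref{eq:c25} of Lemma~\ref{l:pp} gives, for $f\in PW_\Lambda(A_p)$,
\[
\|f-Tf\| = \Bigl\|P_\Lambda\Bigl(f-\sum_i f(x_i)\chi_i\Bigr)\Bigr\|\le\Bigl\|f-\sum_i f(x_i)\chi_i\Bigr\|\le\frac{\delta\,\Omega^{1/2}}{\pi}\,\|f\|.
\]
Writing $r:=\delta\Omega^{1/2}/\pi$, the maximum gap hypothesis \eqref{eq:48} is precisely $r<1$, so $\|I-T\|\le r<1$ as operators on $PW_\Lambda(A_p)$, and $T$ is invertible there with $T^{-1}=\sum_{n\ge0}(I-T)^n$ converging in operator norm.

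Next I would identify the iterates with the terms of this series. By induction $h_n=(I-T)^n\,Tf$: indeed $h_0=Tf$, and the update step $h_{n+1}=h_n-P_\Lambda\bigl(\sum_i h_n(x_i)\chi_i\bigr)=h_n-Th_n=(I-T)h_n$ since $h_n\in PW_\Lambda(A_p)$. Hence $f_n=\sum_{j=0}^{n}h_j=\Bigl(\sum_{j=0}^{n}(I-T)^j\Bigr)Tf\to T^{-1}Tf=f$ in $L^2(\br)$, which is \eqref{eq:65}. For the error bound it is cleanest to sum the norms $\|h_j\|$ rather than telescope, since that produces exactly the stated constant: from $\|Tf\|\le\|f\|+\|f-Tf\|\le(1+r)\|f\|$ we get $\|h_n\|\le r^n(1+r)\|f\|$, whence
\[
\|f-f_n\|=\Bigl\|\sum_{j=n+1}^{\infty}h_j\Bigr\|\le (1+r)\|f\|\sum_{j=n+1}^{\infty}r^j=r^{n+1}\,\frac{1+r}{1-r}\,\|f\|=\Bigl(\frac{\delta\Omega^{1/2}}{\pi}\Bigr)^{n+1}\frac{\pi+\delta\Omega^{1/2}}{\pi-\delta\Omega^{1/2}}\,\|f\|,
\]
which is \eqref{eq:66}. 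The analytic substance is entirely delivered by Lemma~\ref{l:pp}, so the main obstacle is organizational rather than deep: one must check that every object generated by the recursion lies in $PW_\Lambda(A_p)$ (so that $P_\Lambda$ really acts as the identity on it, giving $h_{n+1}=(I-T)h_n$) and that the update sequence $f_n$ is literally the partial-sum sequence of the Neumann expansion of $T^{-1}$.
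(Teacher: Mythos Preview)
Your proof is correct and follows essentially the same Neumann-series approach as the paper: define the approximation operator $T$ (the paper calls it $R$), use Lemma~\ref{l:pp} to bound $\|I-T\|\le r=\delta\Omega^{1/2}/\pi<1$, identify $h_n=(I-T)^nTf$, and sum. Your write-up is in fact more detailed than the paper's, which simply remarks that the error estimate ``follows from the properties of geometric series''; you carry out that computation explicitly and correctly observe that summing the $\|h_j\|$ (rather than telescoping via $f-f_n=(I-T)^{n+1}f$, which would give the tighter bound $r^{n+1}\|f\|$) is what reproduces the constant $(1+r)/(1-r)$ stated in~\eqref{eq:66}.
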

\begin{proof}
Define  $R$ by $Rf = P_\Lambda \Bigl(\sum_{i \in \bz}
f(x_i) \chi_i\Bigr)$. By the Plancherel-Polya
inequality~\eqref{eq:c26} the operator is bounded on $PW_\Lambda (A_p)$. With
this notation we have $h_0 = Rf$ and the iteration step is 
  \[ 
  h_{n+1} = (I- R) h_n = (I-R)^{n+1}h_0= (I-R)^{n+1} Rf\, . 
\] 
The sum  $ \sum h_n = \sum _{n=0}^\infty
(\mathrm{I}-R)^nRf$ is just the Neumann series for the inverse of $R$ applied to $Rf $. 
By the fundamental inequality~\eqref{eq:c25}  and the assumption on
$\delta $ the operator norm of
$\mathrm{I}-R$ on $PW_\Lambda (A_p)$    is bounded by   $\norm{\mathrm{I}-R} \leq \delta
\Omega ^{1/2} /\pi $. Since $\delta \Omega ^{1/2} /\pi <1$ by
assumption, the Neumann series converges in the operator norm  to
$R^{-1}$ and  consequently $f = \sum _{n=0}^\infty h_n$. The error
estimate follows from the properties of geometric series. 
\end{proof}

 \begin{rems}
(1)   If we replace the indicator functions in the reconstruction algorithm by partitions of unity with higher regularity with respect to $A_p$ then the convergence rate of the approximations can be increased. The results require an adapted form of Wirtinger's inequality and will be published elsewhere. \\
(2) The theorem requires only very weak regularity conditions for
$p$, essentially $p$ should be  bounded away from zero.  
\end{rems} 
%

\section{Landau's necessary density conditions }
\label{sec:landau}
 In this section we state and prove {density conditions}  in the style of Landau for
 sampling sequences $X \subset \br$, spectral sets $\Lambda \subseteq \br^+$ of finite Lebesgue measure, and functions in
 $PW_\Lambda(A_p)$.  We find necessary
 conditions on $X$ in terms of an appropriately defined version  of
 the 
 {Beurling density} such that the reproducing kernels
 $\set{k(x,\cdot) \colon x \in X}$ form either  a frame or  a Riesz
 sequence  for $PW_\Lambda(A_p)$. 
Recall that 
$\set{k(x,\cdot) \colon x \in X}$ is  a Riesz  sequence  for $PW_\Lambda(A_p)$, if  there are positive constants $C,D$ such that for all $c \in\ell^2(X)$
\[
C \sum_{x \in X}\abs{c_x}^2\leq \bignorm{\sum_{x\in X} c_x k(x,\cdot)}^2 \leq D \sum_{x \in X}\abs{c_x}^2 \,.
\]
Equivalently, for all $c\in \ell ^2(X)$ there exists an $f\in PW
_\Lambda (A_p)$, such that $f(x ) = c_x, \forall x\in X$,
therefore $X$ is also called a \emph{set of interpolation} for $\PW
\Lambda $. 

\subsection{Beurling density}
\label{sec:beurdens}
Assume $X$ is a\emph{relatively separated} subset  of $\br$,  i.e.,
$\max _{c\in \br} \# (X \cap[c,c+1]) = n_0 < \infty $.  This property
implies that an interval $I$  of 
length $|I|$ contains at most $(|I| +1)n_0$ points of $X$. The upper 
Beurling density  of $X$ is defined as 
\begin{equation*}
   D^+(X)= \varlimsup_{r \to \infty}  \sup_{ \abs I =r} \frac{\set{\#
       (X \cap I) \colon  I \subset \br \text{ closed interval}} }{r}
   \,, 
\end{equation*}
and the lower Beurling density is 
\begin{equation*}
   D^-(X)= \varliminf_{r \to \infty}  \inf_{\abs I =r} \frac{\set{\# (X \cap I) \colon  I \subset \br \text{ closed interval}} }{r} \,.
\end{equation*}
Landau's density conditions for  the classical
Paley-Wiener space $PW_\Lambda = \{f\in \Ltwo : \supp \, \hat{f}
\subseteq \Lambda \}$ state that  a set  of stable
sampling $X$ for $PW_\Lambda$  satisfies necessarily $D^-(X) \geq \abs \Lambda/(2
\pi)$. Similarly, if $X$ is a set of interpolation for $PW_\Lambda$
then $D^+(X) \leq \abs \Lambda/(2 \pi)$~\cite{Landau67a,Landau67}.  

What is the  the appropriate notion of density  for the Paley-Wiener
spaces of variable \bw ? Let us assume first that  $p$ is piecewise
constant, say $p(x) = p_k$ on  the interval $I_k$. According to
Proposition~\ref{prop:effbw} the function  $f\in PW_{[0,\Omega]}  (A_p)$  coincides  on $I_k$
with the restriction  of a  function in the Bernstein space $B_{\sqrt{\Omega/p_k}}$ to
$I_k$, so we expect the number of samples  in $I_k$ required  for the  reconstruction 
in  $I_k$  to be roughly 
\begin{equation}
  \label{eq:c27}
 \frac{\# (X \cap I_k)}{\abs {I_k}} \sim \sqrt {\frac{\Omega}{p_k}}
 \, .  
\end{equation}
Rewriting \eqref{eq:c27} as $\frac{\# (X \cap I_k)}{\abs {I_k}
  p_k^{-1/2}} \sim \Omega ^{1/2}$, we may interpret $\abs {I_k}
p_k^{-1/2}$ as a new measure (or distance function) on $\br$ and the
quantity in \eqref{eq:c27} as an average number of samples with
respect to this measure.

We therefore introduce the measure $\mu _p$ by 
\begin{equation}
  \label{eq:28}
  \mu_p(I)=\int_I p^{-1/2}(u) \, du \,.
\end{equation}
\begin{defn}
  Assume that  $p^{-1/2} \in L_{loc}^1$ and that  $X \subseteq \br$    is
  $\mu_p$-separated, i.e.,  $\inf \set{\mu_p([x,z]) \colon x, z \in X,x
    < z } >0$. The  upper  $A_p$-Beurling density is defined   as  
\begin{equation*}
   D_p^+(X)= \varlimsup_{r \to \infty}  \sup_{ \mu_p (I) =r}
   \frac{\set{\# (X \cap I) \colon  I \subset \br \text{ closed
         interval}}}{r} \,, 
\end{equation*}
and the lower $A_p$-Beurling density is 
\begin{equation*}
   D_p^-(X)= \varliminf_{r \to \infty}  \inf_{\mu_p (I) =r}
   \frac{\set{\# (X \cap I) \colon  I \subset \br \text{ closed
         interval } } }{r} \,. 
\end{equation*}
\end{defn}
Again, for $p\equiv 1$ these densities coincide with the standard
Beurling densities. 

\vspace{ 3 mm} 
%
%
%
To  derive \ndc\  for sampling and interpolation in $\PW \Lambda $, we
restrict our attention to the model case of eventually constant
$p$. From now on we  assume that $p$ satisfies the universal
assumption 
(\ref{eq:59}) and that 
\begin{equation} \label{eq:psmooth}
\begin{split} 
p, p' & \in AC_{loc}(\br) \,,  \\
 p(x)&=
 \begin{cases}
& p_- >0,  \text{ if } x <- R \\   
& p_+ >0,   \text{ if }  x > R \, . \end{cases}
 \end{split}
\end{equation}



\begin{thm}[Necessary density conditions for interpolation] \label{thm:landau-interpol}
Assume that $\Lambda \subseteq \br ^+$ has finite Lebesgue measure   and 
that  $p$ satisfies \eqref{eq:psmooth}.   If $\{k(x,\cdot) : x \in
X\}$ is a Riesz  sequence for $PW_\Lambda(A_p)$, then 
\[
D_p^+(X) \leq \frac{\abs {\Lambda^{1/2}}}{ \pi} \,.
\]
\end{thm}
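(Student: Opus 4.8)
The plan is to adapt the comparison-principle argument of Nitzan--Olevski \cite{Nitzan12} to the variable-bandwidth setting, exploiting the transition to the Schr\"odinger picture. First I would pass from $A_p$ to the unitarily equivalent Schr\"odinger operator $B_q = -D^2 + q$ via the Liouville transform $U$, with $q$ the explicit potential associated to $p$; under the model-case hypothesis \eqref{eq:psmooth} the potential $q$ is compactly supported. The Liouville transform is a ``time-warping'' $Uf(t) = p(\gamma(t))^{1/4} f(\gamma(t))$ for the change of variables $t = \int_0^x p(u)^{-1/2}\,du$, so a set $X$ which is $\mu_p$-separated and has $A_p$-Beurling density $D_p^\pm(X)$ is carried to a set $\tilde X = \{\int_0^{x}p^{-1/2}\} $ which is separated in the ordinary sense and has ordinary upper/lower Beurling density $D^\pm(\tilde X) = D_p^\pm(X)$. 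Moreover $\{k_{A_p}(x,\cdot):x\in X\}$ is a Riesz sequence in $PW_\Lambda(A_p)$ if and only if $\{k_{B_q}(\tilde x,\cdot):\tilde x \in \tilde X\}$ is a Riesz sequence in $PW_\Lambda(B_q)$, because $U$ is unitary and intertwines the spectral projections. So it suffices to prove $D^+(\tilde X) \le |\Lambda^{1/2}|/\pi$ for $PW_\Lambda(B_q)$.

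Next I would set up the Nitzan--Olevski machinery for $PW_\Lambda(B_q)$. Since $q$ has compact support, scattering theory (anticipated in Proposition~\ref{Schroescatt} and the remark after Proposition~\ref{prop:effbw}) gives generalized eigenfunctions $\Phi(\lambda,\cdot)$ that are uniformly bounded in $x$, the spectral measure $d\rho$ is absolutely continuous, and outside a compact interval $\Phi(\lambda,x)$ is asymptotically a combination of plane waves $e^{\pm i \sqrt\lambda\, x}$ with transmission/reflection coefficients of modulus controlled by $1$. Consequently the diagonal of the reproducing kernel $k(x,x) = \int_\Lambda |\Phi(\lambda,x)|^2\, d\rho(\lambda)$ satisfies $\lim_{|x|\to\infty} k(x,x) = |\Lambda^{1/2}|/\pi$ (the same constant from both ends, since the asymptotic density of the spectrum of $-D^2+q$ restricted to $\Lambda$ is governed by the free Laplacian). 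The core estimate I need is a concentration/localization bound: the operator $P_\Lambda = c_\Lambda(B_q)$ restricted to a long interval $[-N,N]$ has trace $\asymp \frac{2N|\Lambda^{1/2}|}{\pi}$ with the right error term, and $P_\Lambda$ is ``almost a projection localized to $[-N,N]$'' up to an operator of small relative trace — this is where the control of the oscillations of $k(x,y)$ away from the diagonal (Section~7) enters, replacing the explicit $\sinc$-kernel decay used in the classical proof. Then the standard argument runs: if $\{k(\tilde x,\cdot)\}$ is a Riesz sequence with bounds $C,D$, then on each long window the number of points $\#(\tilde X\cap[-N,N])$ is bounded above by (a constant multiple of, and after the homogenization-in-scale argument exactly) the trace of the localized $P_\Lambda$, giving $D^+(\tilde X)\le |\Lambda^{1/2}|/\pi$.

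More concretely, for the interpolation (Riesz-sequence) direction I would use the dual formulation: a Riesz sequence of reproducing kernels means that for every $c\in\ell^2(\tilde X)$ there is $f\in PW_\Lambda(B_q)$ with $f(\tilde x)=c_{\tilde x}$ and $\|f\|^2 \le C^{-1}\|c\|^2$. Fixing a window $I = [a-N, a+N]$ and the finite subset $\tilde X \cap I$, one compares the ``local'' frame-like operator $\sum_{\tilde x\in \tilde X\cap I} \langle\cdot,k(\tilde x,\cdot)\rangle k(\tilde x,\cdot)$ with the localized spectral projection $P_I := \mathbf 1_I P_\Lambda \mathbf 1_I$: a Riesz lower bound forces $\#(\tilde X\cap I) \le C^{-1}\operatorname{rank}$-type count, and $\operatorname{Tr} P_I = \int_I k(x,x)\,dx = \frac{2N|\Lambda^{1/2}|}{\pi} + o(N)$ by the asymptotics above. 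Dividing by $\mu_p(I)$-equivalently by $2N$ in the warped coordinate-and letting $N\to\infty$ yields the claim; the uniformity needed to pass the $\limsup$ inside is exactly what the relative separation of $\tilde X$ and the uniform boundedness of $\Phi$ provide.

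The main obstacle I anticipate is the kernel estimate in the transition/scattering regime: away from the diagonal $k(x,y)$ does not decay like a clean $\sinc$, and near the support of $q$ the generalized eigenfunctions are not plane waves at all, so one must show that this ``bad'' region contributes only a bounded (hence, after division by $2N$, negligible) correction to the trace and to the off-diagonal mass controlling $\operatorname{Tr}(P_I - P_I^2)$. This requires the precise asymptotic expansions of $\Phi(\lambda,x)$ from one-dimensional scattering theory — uniformity in $\lambda\in\Lambda$ and decay rates in $|x|$ — which is deferred to Section~7; granting those estimates, the rest of the argument is a faithful transcription of \cite{Nitzan12} with $|\Lambda|/(2\pi)$ replaced by $|\Lambda^{1/2}|/\pi$ and Lebesgue measure replaced by $\mu_p$ (equivalently, ordinary measure in the Liouville coordinate). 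A secondary, more bookkeeping-type difficulty is verifying that the Liouville transform genuinely takes $\mu_p$-separation and $D_p^\pm$ to ordinary separation and $D^\pm$ with equality of densities, which follows since $t=\int_0^x p^{-1/2}$ is exactly the substitution defining $\mu_p$, but must be stated carefully because $p$ is only eventually constant, so $\gamma$ is bi-Lipschitz only globally, not with constant $1$.
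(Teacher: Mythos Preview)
Your overall strategy is correct and matches the paper: reduce to the Schr\"odinger picture via the Liouville transform (this is exactly Lemma~\ref{prop:PWLiouville}, and your remarks about $D_p^\pm(X)=D^\pm(\tilde X)$ and the transfer of the Riesz-sequence property are precisely items~(ii) and~(iv) there), then run a Nitzan--Olevski comparison using the diagonal asymptotic $\int_I k(y,y)\,dy \sim \tfrac{|\Lambda^{1/2}|}{\pi}|I|$ (Lemma~\ref{lem: specfunasymp}) and kernel localization estimates (Lemma~\ref{lem:conthap}).

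Where your sketch drifts from the paper is in the comparison step itself. You describe a ``rank-type count'' and control of $\operatorname{Tr}(P_I-P_I^2)$; that is Landau's original eigenvalue-counting mechanism, not the Nitzan--Olevski one, and the paper explicitly avoids it. What the paper actually does is work directly with the biorthogonal system $\{g_x\}$ to the Riesz sequence: on the finite-dimensional span $V_I=\operatorname{span}\{k(x,\cdot):x\in X\cap I\}$ one has the pointwise inequality
\[
\sum_{x\in X\cap I} k(x,y)\,\overline{P_{V_I} g_x(y)} \le k(y,y)
\]
(Lemma~\ref{lem:RB-kernel}), and integrating over a slightly enlarged window $I_b$ turns the left side into $\#(X\cap I)(1-C\epsilon)$ using only the weak localization bound $\int_{|y-x|>b_\epsilon}|k(x,y)|^2\,dy<\epsilon^2$ of Lemma~\ref{lem:conthap}, while the right side is handled by Lemma~\ref{lem: specfunasymp}. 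No spectral analysis of the truncated projection $\mathbf 1_I P_\Lambda \mathbf 1_I$ is needed, and in particular no estimate on $\operatorname{Tr}(P_I-P_I^2)$ enters. If you genuinely transcribe \cite{Nitzan12} you will arrive at this argument; just be aware that the ingredients are the biorthogonal inequality plus weak localization, not a rank or plunge-region trace bound.
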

\begin{thm}[Necessary density conditions for sampling]\label{thm:nds}
Assume that $\Lambda \subseteq \br ^+$ has finite Lebesgue measure and 
that  $p$ satisfies \eqref{eq:psmooth}.   If $\{k(x,\cdot) : x \in X\}$ is a frame for
$PW_\Lambda(A_p)$, then 
\[
D_p^-(X) \geq \frac{\abs {\Lambda^{1/2}}}{ \pi} \,.
\]
\end{thm}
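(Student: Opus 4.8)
The plan is to follow the Nitzan–Olevski strategy cited in the introduction: compare the discrete frame of reproducing kernels $\{k(x,\cdot):x\in X\}$ with the continuous resolution of identity coming from the spectral transform, and extract the density bound from a trace/dimension count. The first step is to pass to the Schr\"odinger picture via the Liouville transform, replacing $A_p$ by $B_q=-D^2+q$ with $q$ supported (essentially) in $[-R,R]$ and $p_\pm$-dependent wave speeds at $\pm\infty$; sampling sets for $PW_\Lambda(A_p)$ correspond to time-warped sampling sets for $PW_\Lambda(B_q)$, and the measure $\mu_p$ becomes (up to the constant factors $\sqrt{p_\pm}$ at infinity) ordinary Lebesgue measure after warping. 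One must check that $D_p^-$ is preserved under this change of variables — this is a bookkeeping step but must be done carefully near $\pm\infty$ where the warping is affine with slopes $1/\sqrt{p_\pm}$.

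Next I would set up the comparison argument on a large interval $I$ with $\mu_p(I)=r$ (equivalently, a large interval $J=\gamma(I)$ in warped coordinates). Let $P_\Lambda$ be the spectral projection onto $PW_\Lambda(B_q)$ and $Q_J$ the operator of multiplication by $\mathbf 1_J$. The key quantities are $\operatorname{tr}(P_\Lambda Q_J P_\Lambda)=\int_J k(x,x)\,dx$, which by the asymptotics of the reproducing kernel on the diagonal (the scattering-theoretic estimates proved in Section~7, giving $k(x,x)\to |\Lambda^{1/2}|/\pi$ as $x\to\pm\infty$) is $\frac{|\Lambda^{1/2}|}{\pi}|J| + o(|J|)$. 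On the other hand, since $\{k(x,\cdot)\}$ is a frame, the frame operator $S=\sum_{x\in X}k(x,\cdot)\otimes k(x,\cdot)$ is bounded above and below on $PW_\Lambda(B_q)$, so $P_\Lambda Q_J P_\Lambda$ is, up to controlled error, dominated by a sum of at most $\#(X\cap I)$ rank-one terms; a Hilbert–Schmidt / comparison estimate (Nitzan–Olevski's lemma relating the trace of $P_\Lambda Q_J P_\Lambda$ to the number of sampling points lying in a slightly enlarged interval, using the off-diagonal decay of $k$ to control the "leakage" across $\partial J$) yields
\[
\operatorname{tr}(P_\Lambda Q_J P_\Lambda) \le \#(X\cap I_{+\epsilon}) + (\text{error}),
\]
where $I_{+\epsilon}$ is $I$ enlarged by a margin that is $o(|J|)$ in $\mu_p$-measure. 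Combining the two estimates, dividing by $r=\mu_p(I)$, and letting $r\to\infty$ (then the margin $\epsilon$) gives $D_p^-(X)\ge |\Lambda^{1/2}|/\pi$.

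The main obstacle I expect is controlling the reproducing kernel $k_\Lambda(x,y)$ of $PW_\Lambda(B_q)$ precisely enough: one needs both the diagonal asymptotics $k(x,x)=\tfrac{|\Lambda^{1/2}|}{\pi}+o(1)$ as $|x|\to\infty$ and a uniform off-diagonal decay estimate $|k(x,y)|\le \varphi(|x-y|)$ with $\varphi$ integrable (or at least summable against the relatively separated set $X$), uniformly in $x,y$ far from the support of $q$. This is exactly where the scattering theory of the one-dimensional Schr\"odinger operator enters: writing the generalized eigenfunctions $\Phi(\lambda,\cdot)$ in terms of Jost solutions and the scattering matrix, and substituting into formula~\eqref{eq:45}, one must show the kernel is, away from the "collision region" near $\operatorname{supp} q$, an $O(1)$-perturbation of the free sinc-type kernels with speeds $1/\sqrt{p_\pm}$, with errors decaying in $|x|+|y|$. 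Handling the transition region (where $x<0<y$, say, so the kernel mixes the two asymptotic regimes) and making the "leakage across $\partial J$" term genuinely $o(|J|)$ uniformly is the delicate part; this is the content of Section~7 and I would treat its estimates as black boxes here, invoking them to close the argument.
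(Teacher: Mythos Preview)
Your outline matches the paper's strategy: Liouville transform to the Schr\"odinger picture (so that $D_p^-(X)=D^-(\zeta(X))$ exactly, not merely up to constants --- the warping $\zeta(x)=\int_0^x p^{-1/2}$ is precisely $\mu_p$, so this is an identity, not an asymptotic statement), then a Nitzan--Olevskii comparison between $\int_I k(y,y)\,dy$ and $\#(X\cap I)$. Two points in your sketch diverge from what actually works and would cause trouble if taken literally.

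First, the mechanism for the inequality $\operatorname{tr}(P_\Lambda Q_J P_\Lambda)\le \#(X\cap I_+)+\text{error}$ is \emph{not} a domination of $P_\Lambda Q_J P_\Lambda$ by rank-one pieces of the frame operator. A naive frame bound gives $k(y,y)\le A^{-1}\sum_x|k(x,y)|^2$, and the frame constant $A^{-1}$ ruins the sharp density. The paper (following Nitzan--Olevskii) instead uses the \emph{canonical dual frame} $\{g_x\}$: the identity $k(y,y)=\sum_{x\in X}k(x,y)\overline{g_x(y)}$ holds pointwise, and the crucial fact $|\langle k(x,\cdot),g_x\rangle|\le 1$ means that each nearby sampling point contributes at most $1+C\epsilon$ after localizing. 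This is what produces the count $\#(X\cap I_-)$ with the correct constant.

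Second, the kernel estimate you anticipate --- a pointwise bound $|k(x,y)|\le\varphi(|x-y|)$ with integrable $\varphi$ --- is neither proved nor needed, and for disconnected $\Lambda$ it is likely false. What the paper proves (Lemmas~\ref{lem:conthap} and~\ref{lem:hap}) are $L^2$-tail bounds: $\sup_x\int_{|y-x|>b_\epsilon}|k(x,y)|^2\,dy<\epsilon^2$ and its discrete analogue $\sup_y\sum_{x\in X,\,|x-y|>b_\epsilon}|k(x,y)|^2<\epsilon^2$. These are exactly what the dual-frame argument consumes. Finally, the discrete tail bound (HAP) in the paper requires $\Lambda$ bounded; the case of general $\Lambda$ with $|\Lambda|<\infty$ is handled afterward by truncating to $\Lambda\cap[0,\Omega]$ and letting $\Omega\to\infty$, a step your outline omits.
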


Thus the quantity  $\frac{\abs {\Lambda^{1/2}}}{ \pi} $ is  the
critical density that separates  sets of stable sampling from sets of
interpolation.

\begin{rems}
1. We have seen in the introduction that for $p\equiv 1$, $A_p = -\frac{d^2}{dx^2}$, and $\Lambda =
[0,\Omega]$, the corresponding Paley-Wiener space
$PW_{[0,\Omega]}(A_p)$ is equal to the space of \bdl\ functions $\{
f\in \Ltwo : \supp \, \hat{f}\subseteq [-\Omega ^{1/2},
\Omega ^{1/2}]\}$. Then the necessary density condition is 
$D_p^-(X) = D^-(X) \geq \Omega ^{1/2}/\pi  =
|[-\Omega ^{1/2},\Omega ^{1/2}]|/(2\pi )$. Theorem~\ref{thm:nds} contains
Landau's result as a special case. The difference in formulation comes
from the use of a second order differential operator which identifies
positive and negative frequencies with a single spectral value.

2.  The assumption \eqref{eq:psmooth}   excludes both the toy example of  Section
\ref{sec:model-case} (because of lacking smoothness) and more general
parametrizing functions (when $p$ tends to $p_{\pm}$ at a certain
rate). To restrict the length of this paper, we will only treat the
case of eventually constant $p$ and return to weaker assumptions in
our future work. 


\end{rems}

We first compare the maximum gap condition of
Section~\ref{sec:irregular-sampling-} with the Beurling density. 
\begin{prop}
  \label{prop:recalgquasiopt}
Let   $X=\set{x_i \colon {i \in \bz}} \subseteq \br $ be a set  with $x_i <
x_{i+1}$ for all $i$ and $\lim _{i\to \pm \infty } x_i = 
\pm \infty $. If  
\begin{equation}\label{eq:c4}
  \sup_i \frac{x_{i+1}-x_i}{\inf_{x \in [x_i;x_{i+1}]} \sqrt{p(x)}} = \eta 
\end{equation}
then $D_p^-(X) \geq \eta ^{-1}$. 
\end{prop}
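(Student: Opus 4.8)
The plan is to translate the weighted maximum‑gap condition \eqref{eq:c4} into a statement about the measure $\mu_p$ — namely that consecutive points of $X$ are at $\mu_p$‑distance at most $\eta$ — and then to read off the density bound by an elementary counting argument. The starting observation is that for every $i$,
\[
\mu_p([x_i,x_{i+1}]) = \int_{x_i}^{x_{i+1}} p^{-1/2}(u)\,du \le \frac{x_{i+1}-x_i}{\inf_{x\in[x_i,x_{i+1}]}\sqrt{p(x)}} \le \eta \,,
\]
so the gaps of $X$ all have $\mu_p$‑measure at most $\eta$. We may assume $\eta<\infty$, since otherwise there is nothing to prove; under the standing assumptions one moreover has $\eta>0$, as $\mu_p$ is not the zero measure.

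\textbf{Counting step.} Fix a closed interval $I=[a,b]$ with $\mu_p(I)=r$ and put $N=\#(X\cap I)$. Because $X$ is increasing and indexed by $\bz$, the points of $X$ in $I$ are consecutive; assume first $N\ge 1$ and write them as $x_m<x_{m+1}<\dots<x_{m+N-1}$. Since $x_i\to\pm\infty$, the indices $m-1$ and $m+N$ exist, and minimality/maximality of $x_m$, $x_{m+N-1}$ in $I$ force $x_{m-1}<a$ and $x_{m+N}>b$. Hence $I\subseteq[x_{m-1},x_{m+N}]$, and the latter is the union of the $N+1$ consecutive gaps $[x_k,x_{k+1}]$ with $m-1\le k\le m+N-1$, so
\[
r=\mu_p(I)\le\sum_{k=m-1}^{m+N-1}\mu_p([x_k,x_{k+1}])\le(N+1)\,\eta \,.
\]
If instead $N=0$, then $I$ lies in a single gap and $r\le\eta$. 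Thus, as soon as $r>\eta$, every closed interval $I$ with $\mu_p(I)=r$ contains at least $N\ge r/\eta-1$ points of $X$, i.e.
\[
\frac{\#(X\cap I)}{r}\ \ge\ \frac1\eta-\frac1r \,.
\]

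\textbf{Passing to the limit.} Taking the infimum over all such intervals and then $r\to\infty$ gives
\[
D_p^-(X)=\varliminf_{r\to\infty}\ \inf_{\mu_p(I)=r}\frac{\#(X\cap I)}{r}\ \ge\ \varliminf_{r\to\infty}\Big(\frac1\eta-\frac1r\Big)=\frac1\eta=\eta^{-1}\,,
\]
as claimed. (If $\mu_p(\br)<\infty$ the constraint set $\{I:\mu_p(I)=r\}$ is eventually empty and the infimum is $+\infty$, which only strengthens the bound.) I do not expect a genuine obstacle here: the argument is essentially bookkeeping, and the only point requiring a little care is the handling of the two boundary gaps straddling $\partial I$ together with the degenerate case $N=0$. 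This proposition is really a consistency check showing that the maximum‑gap condition of Section~\ref{sec:irregular-sampling-} is essentially density‑optimal; the substantive work of the section lies in the converse direction, Theorems~\ref{thm:landau-interpol} and~\ref{thm:nds}.
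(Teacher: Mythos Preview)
Your proof is correct and follows essentially the same route as the paper: bound each $\mu_p$-gap by $\eta$, cover $I$ by the consecutive gaps straddling it, and divide. Your bookkeeping is in fact slightly tighter (you obtain $r\le (N+1)\eta$, whereas the paper is content with $r\le (\#(X\cap I)+2)\eta$), and you handle the degenerate case $N=0$ explicitly, but neither difference affects the argument or the conclusion.
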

\begin{proof}
 The gap condition \eqref{eq:c4} implies that for all $i\in \bz $
$$
\int _{x_i} ^{x_{i+1}} \frac{1}{\sqrt{p(x)}} \, dx \leq \frac{x_{i+1}-x_i}{\inf_{x
    \in [x_i;x_{i+1}]} \sqrt{p(x)}} \leq \eta \, .
$$
Given a bounded, closed interval $I \subseteq \br $, let $i_0 = \min
\{i\in \bz : x_i \in I \}$ and $i_1 = \max
\{i\in \bz : x_i \in I\}$ the smallest  and largest indices of 
  $x_i \in I$. Then $I \subseteq [x_{i_0-1}, x_{i_1+1}]$ and 
  \begin{align*}
    \mu _p(I) &= \int _I \frac{1}{\sqrt{p(x)}} \, dx \leq \sum _{i=i_0
      -1}^{i_1} \int _{x_i} ^{x_{i+1}} \frac{1}{\sqrt{p(x)}} \, dx  \leq  \eta \Big(\# (X\cap I) +2\Big) \,  .
  \end{align*}
Consequently
$$
\frac{\# (X\cap I)}{\mu _p(I)} \geq \frac{1}{\eta} - \frac{2}{\mu
  _p(I)} \, ,
$$
and after taking a limit we obtain  $D_p^-(X) \geq \inv\eta  $. 
\end{proof}

Proposition~\ref{prop:recalgquasiopt} shows that condition~\eqref{eq:48} is sharp for
eventually constant para\-me\-trizing functions $p$. If $X$ is defined by
$x_0 = 0$ and $\frac{x_{i+1} - x_i}{\inf _{x\in [x_i, x_{i+1}]} \sqrt{p(x)}}
= \delta > \frac{\pi}{\Omega ^{1/2} -\epsilon }$, then 
$D_p^-(X) = (\Omega ^{1/2} -\epsilon ) /\pi  < \Omega ^{1/2}  /\pi
$ and thus cannot be a set of stable sampling by Theorem~\ref{thm:nds}.


The remainder of the paper is devoted to the proof of 
Theorems~\ref{thm:landau-interpol} and~\ref{thm:nds}. 
We follow the approach of   Nitzan and Olevskii~\cite{Nitzan12} who
compare a discrete set of reproducing kernels  to a \cont\ resolution
of the identity in the space of bandlimited functions.  Other
approaches, such as the original technique of
Landau~\cite{Landau67, Landau67a},  the technique of  Ortega-Cerd{\`a}
and Pridhnani~\cite{Ortega-cerda12}, or the approach of
\cite{Gr96landau,Gr08hap},  might be successful as well, but these
will require additional features, such as  the existence of a Riesz basis of
reproducing kernels. 
\subsection{Transformation to Schr\"odinger form}
\label{sec:transf-schr-form-1}
In the first step we transform the problem from the Sturm-Liouville
picture to the Schr\"odinger picture. This transformation enables us
to use the scattering theory of the Schr\"odinger operator. 
%
First we describe the unitary transform  that sends the operator $A_p$ to a Schr\"odinger operator.

Assume $p, p' \in AC_{loc}(\br)$, $p(x) >0$ for all $ x \in \br$. Define
\begin{equation}
\zeta(x) = \int_0^x p^{-1/2}(u) \, du \, , \label{eq:64}
\end{equation}
so that $\zeta  (x) = \mu _p([0,x])$ for $x>0$ and  $\zeta (x) = -
\mu _p([x,0])$ for $x<0$.  

\begin{prop} 
  \label{prop:liouville-transform}
Define  the \emph{Liouville transform} $U_L$  of $f\in L^2(\br)$ by  
$$
  U_Lf = (p^{1/4}f)\circ \inv \zeta \, .
$$
Then $U_L$ is a unitary operator on $\Ltwo $. It transforms the \saj\ operator $A_p$ to the \saj\ Schr\"odinger operator $B_q= -D^2 +q$ by conjugation:
\[
 B_q=U_L A_p U_L^* \,, \qquad  \mD(B_q)= U_L \mD(A_p) \, , 
\]
where  the potential $q$ of $B_q$ is 
\begin{equation}\label{eq:63}
q(\zeta(x))= - p(x)^{1/4} \bigl[p   \cdot  (p^{-1/4})' \bigr]'(x)
=\frac 1 4 p''(x) -\frac 1 {16} \frac{p'(x)^2}{p(x)}  \, . 
\end{equation}

In particular, if $\phi $ solves
 $(\tau _p -\lambda)\phi=0$, then $U_L \phi $ solves $(\tilde \tau _q-\lambda)\psi=0$.
\end{prop}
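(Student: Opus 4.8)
The plan is to verify directly that $U_L$ is unitary, then compute the conjugated operator on a dense set of nice functions and read off the potential.

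\textbf{Step 1: Unitarity of $U_L$.} The map $\zeta$ of \eqref{eq:64} is a $C^1$-diffeomorphism of $\br$ onto its image (an interval, possibly all of $\br$ when $P = \zeta$ is unbounded in both directions, which is exactly the self-adjointness situation of Proposition~\ref{prop:saj}), since $\zeta' = p^{-1/2}>0$ is locally absolutely continuous. For the change of variables $u = \zeta(x)$ we have $du = p^{-1/2}(x)\,dx$, so for $g = U_L f = (p^{1/4}f)\circ \zeta^{-1}$ one computes
\[
\int |g(u)|^2 \, du = \int |p^{1/4}(x) f(x)|^2 \, p^{-1/2}(x)\, dx = \int |f(x)|^2\, dx \, .
\]
Hence $U_L$ is an isometry onto $L^2$ of the range of $\zeta$; surjectivity is clear from the explicit inverse $U_L^* g = p^{-1/4}\cdot (g\circ \zeta)$. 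So $U_L$ is unitary on $\Ltwo$.

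\textbf{Step 2: Transforming the differential expression.} The key identity is purely formal (pointwise, for $C^2$ functions). Writing $w = p^{1/4}$, one shows that for smooth $\phi$,
\[
U_L\bigl(\tau_p \phi\bigr) = U_L\bigl(-(p\phi')'\bigr) = -\bigl(U_L\phi\bigr)'' + q\cdot U_L\phi \, ,
\]
with $q$ given by \eqref{eq:63}. This is a standard Liouville-normal-form computation: set $\psi = U_L\phi = w\phi \circ \zeta^{-1}$, use the chain rule $\frac{d}{du} = p^{1/2}\frac{d}{dx}$ (evaluated at $x = \zeta^{-1}(u)$) twice, and collect terms; the first-order term cancels precisely because of the weight $w = p^{1/4}$, and the zeroth-order coefficient works out to $-p^{1/4}[p\cdot(p^{-1/4})']'$. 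A short algebraic simplification, differentiating $p^{-1/4}$ and then $p\cdot(p^{-1/4})'= -\tfrac14 p^{-1/4}p'$, gives the closed form $\tfrac14 p'' - \tfrac1{16}(p')^2/p$. I would relegate this computation to the reader or an appendix, merely indicating the cancellation mechanism.

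\textbf{Step 3: Domains and self-adjointness.} Since $A_p$ is self-adjoint and $U_L$ is unitary, $B_q := U_L A_p U_L^*$ is automatically self-adjoint on the domain $U_L\mD(A_p)$. It remains to identify $B_q$ with the maximal Schr\"odinger operator $\tilde\tau_q = -D^2 + q$: one checks that $U_L$ carries the conditions $f, pf'\in AC_{loc}$, $\tau_p f\in L^2$ into $\psi, \psi'\in AC_{loc}$, $\tilde\tau_q\psi\in L^2$ — this follows because $\zeta$ is a locally bi-Lipschitz $C^1$-change of variables and $p^{\pm 1/4}$ are locally absolutely continuous, so absolute continuity and local integrability are preserved, and Step 2 identifies the expressions. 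One also notes $q\in L^1_{loc}$ under \eqref{eq:psmooth} (indeed $p, p'\in AC_{loc}$), so $\tilde\tau_q$ is a legitimate Sturm–Liouville expression satisfying \eqref{eq:59}, and it is LP at $\pm\infty$ because unitary equivalence preserves the deficiency indices. The final sentence — that $\phi$ solving $(\tau_p-\lambda)\phi=0$ gives $U_L\phi$ solving $(\tilde\tau_q-\lambda)\psi=0$ — is then immediate from the pointwise identity of Step 2 applied with $\tau_p\phi = \lambda\phi$.

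\textbf{Main obstacle.} The only genuinely delicate point is the bookkeeping in Step 2–3: making sure the formal computation is valid at the right regularity (functions in $\mD(A_p)$ need not be $C^2$, only $f, pf'\in AC_{loc}$), so one should either argue by density from smooth compactly supported functions or carry the $AC_{loc}$ hypotheses carefully through the two chain-rule differentiations. The algebra producing \eqref{eq:63} is routine but error-prone; beyond that there is no conceptual difficulty.
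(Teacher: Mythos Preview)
Your proposal is correct and is precisely the ``direct computation'' the paper alludes to; the paper itself does not give a proof but simply writes ``For a proof see \cite{Birkhoff69,Everitt74} or try a direct computation.'' Your three-step outline (unitarity via the change of variables $u=\zeta(x)$, the Liouville normal-form calculation producing the potential \eqref{eq:63}, and the transport of the domain conditions) is exactly how one would carry this out, and your identification of the $AC_{loc}$ bookkeeping as the only delicate point is accurate.
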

For a proof see ~\cite{Birkhoff69,Everitt74} or try a direct
computation. 
The next lemma explains the translation from the Sturm-Liouville
picture   to the Schr\"odinger picture in more detail. 

\begin{lem}  \label{prop:PWLiouville}
Assume $p, p' \in AC_{loc}(\br)$ and $p(x) >0$ for all $ x \in \br$. Then
\begin{enumerate}
\item \label{item:PWinvariant} $ U_L\big( PW_\Lambda(A_p)\big)=PW_\Lambda(B_q) $.
\item \label{item:9} $D^{\pm}_p(X)=D^{\pm}(\zeta(X))$ .
\item \label{item:10} Let   $k$ be  the reproducing kernel for
  $PW_\Lambda(A_p)$ and $h$  be  the reproducing kernel for $PW_\Lambda(B_q)$. Then
\begin{equation}
h(\zeta(x),\cdot)= p^{1/4}(x) U_L k(x,\cdot)\,.\label{eq:33}
\end{equation}
\item \label{item:11}  If $c\leq p(x) \leq C$  for all $x \in \br$,   then
  $(k(x,\cdot))_{x \in X}$ is a frame (Riesz sequence) for
  $PW_\Lambda(A_p)$ if and only if  $(h(\zeta(x),\cdot))_{x \in X}$ is
  a frame (Riesz sequence) for  $PW_\Lambda(B_q)$. 
\end{enumerate}
\end{lem}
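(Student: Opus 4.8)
The plan is to derive all four statements from the one structural fact proved in Proposition~\ref{prop:liouville-transform}, namely that $U_L$ is unitary on $L^2(\br)$ and satisfies $B_q = U_L A_p U_L^*$; everything else is a change of variables together with the standard behaviour of frames and Riesz sequences under unitaries and bounded rescalings. For (i) I would invoke the functional calculus: $B_q = U_L A_p U_L^*$ gives $c_\Lambda(B_q) = U_L\, c_\Lambda(A_p)\, U_L^*$ for every Borel set $\Lambda$, so (recalling Definition~\ref{pesbw}) the identity $f = c_\Lambda(A_p)f$ holds if and only if $U_L f = c_\Lambda(B_q)\,U_L f$; since $U_L$ is a bijection of $L^2(\br)$, this says precisely $U_L\big(PW_\Lambda(A_p)\big) = PW_\Lambda(B_q)$.

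For (ii) the key observation is that the function $\zeta$ of \eqref{eq:64} is a strictly increasing $C^1$-bijection of $\br$ onto $\br$ with $\zeta' = p^{-1/2}$ (this holds under our assumptions on $p$ --- immediately so when $p$ is bounded above and below, or eventually constant). Consequently $\mu_p(I) = \int_I p^{-1/2} = |\zeta(I)|$ for every interval $I$, $\#(X\cap I) = \#\big(\zeta(X)\cap\zeta(I)\big)$, and $I\mapsto\zeta(I)$ is a bijection of the set of closed intervals; in particular $X$ is $\mu_p$-separated exactly when $\zeta(X)$ is separated. Hence in the definition of $D_p^\pm(X)$ the supremum (resp. infimum) over intervals $I$ with $\mu_p(I)=r$ of $\#(X\cap I)/r$ equals the supremum (resp. infimum) over intervals $J=\zeta(I)$ with $|J|=r$ of $\#(\zeta(X)\cap J)/r$, and letting $r\to\infty$ yields $D_p^\pm(X) = D^\pm(\zeta(X))$.

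For (iii) I would combine (i) with the uniqueness of the reproducing kernel. Fix $x\in\br$ and an arbitrary $g\in PW_\Lambda(B_q)$; by (i), $f:=U_L^* g\in PW_\Lambda(A_p)$ and $g=U_L f$, so $g(\zeta(x)) = (U_L f)(\zeta(x)) = p^{1/4}(x)\,f(x)$. Using the reproducing property of $k$ in $PW_\Lambda(A_p)$ and the unitarity of $U_L$,
\[
g(\zeta(x)) = p^{1/4}(x)\,\langle f,\,k(x,\cdot)\rangle = p^{1/4}(x)\,\langle U_L^* g,\,k(x,\cdot)\rangle = \big\langle g,\; p^{1/4}(x)\,U_L k(x,\cdot)\big\rangle \,,
\]
the last step because $p^{1/4}(x)$ is a positive real scalar. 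By (i), $p^{1/4}(x)\,U_L k(x,\cdot)\in PW_\Lambda(B_q)$; since the displayed identity holds for every $g\in PW_\Lambda(B_q)$, point evaluation at $\zeta(x)$ is bounded on $PW_\Lambda(B_q)$ and its Riesz representer, i.e. the reproducing kernel $h(\zeta(x),\cdot)$, equals $p^{1/4}(x)\,U_L k(x,\cdot)$. As $x$ ranges over $\br$ and $\zeta$ is onto, this is \eqref{eq:33}.

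Finally (iv) follows from \eqref{eq:33} and two elementary stability facts: a unitary operator carries a frame (resp. Riesz sequence) for $PW_\Lambda(A_p)$ to a frame (resp. Riesz sequence) for $PW_\Lambda(B_q)=U_L\big(PW_\Lambda(A_p)\big)$ with unchanged bounds; and multiplying the $x$-th element of a frame (resp. Riesz sequence) by a scalar $\mu_x$ with $0<a\le\mu_x\le b<\infty$ again gives a frame (resp. Riesz sequence), the bounds being multiplied by $a^2$ and $b^2$. Here $\mu_x=p^{1/4}(x)$ with $c^{1/4}\le\mu_x\le C^{1/4}$ by hypothesis, so both directions of (iv) follow. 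I do not anticipate a serious obstacle anywhere: all of the analytic work (self-adjointness, the explicit potential $q$, the intertwining relation) is already contained in Proposition~\ref{prop:liouville-transform}, and the only points that need any care are that $\zeta$ is a genuine homeomorphism of all of $\br$ --- which is where the two-sided bound $c\le p\le C$ (or eventual constancy of $p$) is used in (ii) and (iii) --- and keeping track of the positive scalars in the inner product in (iii).
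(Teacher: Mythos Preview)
Your proof is correct and follows essentially the same route as the paper: (i) via the intertwining of spectral projections under the unitary $U_L$, (ii) via the change of variables $\mu_p(I)=|\zeta(I)|$ and the bijection $I\mapsto\zeta(I)$ of intervals, (iii) via the reproducing property combined with $U_Lf(\zeta(x))=p^{1/4}(x)f(x)$ and uniqueness of the Riesz representer, and (iv) via stability of frames/Riesz sequences under unitaries and bounded diagonal rescalings. Your write-up is in fact slightly more explicit than the paper's in justifying that $\zeta$ is a bijection of $\br$ and in isolating where the two-sided bound on $p$ enters; the only minor overcaution is that in (iii) the identity \eqref{eq:33} is only asserted at points of the form $\zeta(x)$, so surjectivity of $\zeta$ is not strictly needed there.
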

\begin{proof}
 \ref{item:PWinvariant} Let 
$ c_\Lambda (A_p)$ be
 the spectral projection onto $PW_\Lambda(A_p)$ and  $f \in \PW
 \Lambda$. Then  
  \[
 c_\Lambda(A_p) f = f \text{  if and only if  } \big(U_Lc_\Lambda(A_p)
 U_L^*\big) U_Lf = U_Lf \, .
\]
Since  by spectral calculus  $ U_Lc_\Lambda(A_p) U_L^*  = c_\Lambda(B_q)$,
we see that $f\in PW_\Lambda (A_p)$ \fif\ $ U_L f  \in PW_\Lambda
(B_q)$. 

 \ref{item:9}  Observe that  for every interval $[a,b] =
 \zeta([\alpha,\beta])= [\zeta (\alpha), \zeta (\beta)]$ we have 
\[
\frac{ \# \big(\zeta(X) \cap [a,b] \big)}{b-a}
 =\frac{ \# \Big(\zeta(X) \cap \zeta\big([\alpha,\beta] \big) \Big)}{\zeta(\beta)-\zeta(\alpha)} =  \frac{ \# \big(X \cap [\alpha,\beta] \big)} {\mu_p([\alpha,\beta])} \,.
\]
Taking  limits on both sides, we find that $D^{\pm } (\zeta (X)) =
D_p ^{\pm }(X)$.

\ref{item:10} If   $f \in PW_\Omega(A_p)$, then  $U_Lf \in
PW_\Lambda(B_q)$ by (i), and 
\begin{equation}
  \label{eq:32}
  U_Lf (\zeta(x))=\inprod{U_Lf, h(\zeta(x),\cdot)}\, . 
\end{equation}
On the other hand 
\begin{align}
  U_Lf(\zeta(x))&=p^{1/4}(x) f(x) =p^{1/4}(x)\inprod{f, k(x,\cdot)}
  \notag \\
&=p^{1/4}(x)\inprod{U_Lf,U_L k(x,\cdot)}
=\inprod{U_Lf,p^{1/4}(x) U_L k(x,.)} \, .  \label{eq:c28}
\end{align}
The combination of  \eqref{eq:32} and \eqref{eq:c28}  yields~\eqref{eq:33}.

(iv)   The image of a frame (a Riesz  sequence) under an
invertible operator is again a frame (a Riesz  sequence).  So
$\{k(x,\cdot) :x \in X\}$ is a frame (a Riesz sequence) for $\PW \Lambda$, if and
only if $ \{U_Lk(x,\cdot) :x \in X\}$ is a frame (a Riesz sequence) for $\PW
[B_q] \Lambda$. Since $c\leq p(x)\leq C$,  $ \{U_Lk(x,\cdot) :
x \in X\}$ is a frame (a Riesz sequence), \fif\  $ \{p(x)^{1/4}
U_Lk(x,\cdot) : x \in X \}$ is a frame (a Riesz sequence).
\end{proof}

  From now on  we will work with the Schr\"odinger picture. By a
  slight abuse of notation we will denote the reproducing kernel for
  $\PW [B_q] \Lambda$ again by the symbol $k$. 

If $p$ is eventually constant, then by \eqref{eq:63} the potential $q$ has compact
support in some interval $[-a,a]$. We therefore assume that 
\begin{equation}
\tag {$MC_q$} \label{item:mcq} 
 \text{the potential $q$ is of the form  ~\eqref{eq:63} for some  $p$
   satisfying ~\eqref{eq:psmooth}.} 
\end{equation}
\\

 Lemma~\ref{prop:PWLiouville},\ref{item:10},\ref{item:11} implies an  equivalent formulation of the Theorems~\ref{thm:landau-interpol} and~\ref{thm:nds}. 

\begin{thm}[Necessary density conditions in $PW_\Lambda(B_q)$]\label{ncdschr}
Assume that  
$q$ satisfies~\eqref{item:mcq}. Let $k$ be the reproducing kernel for $PW_\Lambda(B_q)$.

(A) \emph{
  Interpolation:} 
  If  $\set{k(x,\cdot) \colon x \in X}$ is a Riesz  sequence in  $PW_\Lambda(B_q)$, then
\[
D^+(X) \leq \frac{\abs {\Lambda^{1/2}}}{ \pi} \,.
\]
(B) \emph{
  Sampling:} 
  If $\set{k(x,\cdot) \colon x \in X}$ is a frame for $PW_\Lambda(B_q)$, then
\[
D^-(X) \geq \frac{\abs{\Lambda^{1/2}} }{ \pi} \,.
\]
\end{thm}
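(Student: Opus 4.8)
The two statements are proved together by the comparison scheme of Nitzan and Olevskii~\cite{Nitzan12}: one plays the discrete frame (resp.\ Riesz sequence) $\set{k(x,\cdot)\colon x\in X}$ against the \emph{continuous} Parseval frame $\set{\Phi(\lambda,\cdot)\colon\lambda\in\Lambda}$ supplied (with respect to $d\rho$) by the unitarity of the spectral transform $\mF_{B_q}$ of $PW_\Lambda(B_q)$. Since $q$ is of the form~\eqref{eq:63} for an eventually constant $p$, it has compact support, so $B_q$ is a short-range Schr\"odinger operator; its generalized eigenfunctions are Jost-type solutions which, outside $\supp q$, are explicit linear combinations of $e^{\pm i\omega x}$ with $\lambda=\omega^2$. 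This permits one to diagonalize the matrix measure $\rho$, so that $\mF_{B_q}$ becomes a perturbation of the Fourier transform restricted to $PW_{[-\Omega^{1/2},\Omega^{1/2}]}$ and $d\rho$ has density comparable to the free one. The first — and hard — part of the argument (carried out in Section~\ref{sec:landau}, §7) is to turn this into quantitative control of the reproducing kernel $k(x,y)$ of $PW_\Lambda(B_q)$: \emph{(i)} a uniform bound $\sup_x k(x,x)<\infty$ together with $k(x,x)=\tfrac{|\Lambda^{1/2}|}{\pi}$ for $|x|$ outside $\supp q$, so that $\int_{-R}^R k(x,x)\,dx=\tfrac{2R|\Lambda^{1/2}|}{\pi}+O(1)$; \emph{(ii)} off-diagonal decay, $\int_{|y-x|>\rho}|k(x,y)|^2\,dy\to 0$ as $\rho\to\infty$ uniformly in $x$, so that leakage across the endpoints of a long interval is negligible; and, as a consequence of \emph{(i)}--\emph{(ii)} by comparison with the free case, \emph{(iii)} the Landau–Widom clustering of the concentration operator $C_R:=c_\Lambda(B_q)\,M_{\mathbf 1_{[-R,R]}}\,c_\Lambda(B_q)$ on $PW_\Lambda(B_q)$, namely that for every $\delta>0$ only $o(R)$ of its eigenvalues lie in $[\delta,1-\delta]$.

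Granting \emph{(i)}--\emph{(iii)}, statement (B) follows by soft arguments. Fix $\epsilon>0$, put $R'=R+\rho_R$ with $\rho_R\to\infty$, $\rho_R=o(R)$, and let $V_R=\operatorname{ran}\mathbf 1_{[1-\epsilon,1]}(C_R)$; by \emph{(iii)} and \emph{(i)}, $\dim V_R=\tfrac{2R|\Lambda^{1/2}|}{\pi}+o_\epsilon(R)$. Every $f\in V_R$ satisfies $\|\mathbf 1_{[-R,R]}f\|^2\ge(1-\epsilon)\|f\|^2$. Using the frame inequality $A\|f\|^2\le\sum_{x\in X}|f(x)|^2$ and estimating $\sum_{x\in X\setminus[-R',R']}|f(x)|^2$ via $|f(x)|=|\langle f,k(x,\cdot)\rangle|$ together with \emph{(ii)} (for the points far from $[-R,R]$) and the separation of $X$, one gets $\sum_{x\in X\cap[-R',R']}|f(x)|^2\ge(A-C\sqrt\epsilon)\|f\|^2$ for all $f\in V_R$. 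Hence the orthogonal projection of $V_R$ onto $\operatorname{span}\set{k(x,\cdot)\colon x\in X\cap[-R',R']}$ is bounded below, so $\#\big(X\cap[-R',R']\big)\ge\dim V_R$; dividing by $2R'$ and letting $R\to\infty$ and then $\epsilon\to0$ gives $D^-(X)\ge\tfrac{|\Lambda^{1/2}|}{\pi}$.

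Statement (A) is dual. If $\set{k(x,\cdot)\colon x\in X}$ is a Riesz sequence with bounds $C,D$, so is the finite subfamily indexed by $X\cap[-R,R]$. By \emph{(ii)} the reproducing kernels attached to points of $X\cap[-R,R]$ lying at distance $\ge\rho_R$ from $\pm R'$ are almost entirely contained in $V_{R'}=\operatorname{ran}\mathbf 1_{[1-\epsilon,1]}(C_{R'})$, so after discarding the $O(\rho_R)=o(R)$ boundary points the quantity $\sum_x\|(I-P_{V_{R'}})k(x,\cdot)\|^2$ is small relative to $C$; the Riesz lower bound then forces $P_{V_{R'}}$ to be injective on $\operatorname{span}\set{k(x,\cdot)\colon x\in X\cap[-R,R]}$, whence $\#\big(X\cap[-R,R]\big)\le\dim V_{R'}+o(R)=\tfrac{2R|\Lambda^{1/2}|}{\pi}+o(R)$. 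Dividing by $2R$ and letting $R\to\infty$, then $\epsilon\to0$, yields $D^+(X)\le\tfrac{|\Lambda^{1/2}|}{\pi}$.

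\textbf{Main obstacle.} Everything after \emph{(i)}--\emph{(iii)} is routine functional analysis; the real work is the kernel control \emph{(i)}--\emph{(ii)}. Establishing that $k(x,\cdot)$ concentrates at $x$ with a rate \emph{uniform in $x\in\br$}, and that $k(x,x)$ relaxes to the free value, requires the scattering theory of $B_q$: uniform asymptotics, in both $x$ and the spectral parameter, of the Jost solutions and of the transmission coefficient, integrated against the (a.c.) spectral measure $d\rho$ — with particular care near the spectral edge $\lambda=0$, where the scattering data can degenerate (zero-energy resonance), and near $\lambda=\Omega$. This is precisely why one transports the problem to the Schr\"odinger picture via Lemma~\ref{prop:PWLiouville}, and it is the content of Section~\ref{sec:landau}, §7; given those estimates, the Landau–Widom bound \emph{(iii)} and the comparison above complete the proof of Theorem~\ref{ncdschr}, and hence, through Lemma~\ref{prop:PWLiouville}, of Theorems~\ref{thm:landau-interpol} and~\ref{thm:nds}.
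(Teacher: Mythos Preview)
Your outline invokes Nitzan--Olevskii but what you actually describe is Landau's original scheme via the concentration operator $C_R=c_\Lambda(B_q)M_{\mathbf 1_{[-R,R]}}c_\Lambda(B_q)$ and the Landau--Widom eigenvalue clustering. This is a \emph{different} route from the paper. The paper's argument (Section~\ref{sec:proofs-necess-dens}) never introduces $C_R$ or its eigenvalue distribution. Instead, for sampling it uses the canonical dual frame $\{g_x\}$ and the pointwise identity $k(y,y)=\sum_{x\in X}k(x,y)\overline{g_x(y)}$ (Lemma~\ref{lem:RB-kernel}), integrates over an interval $I$, splits the sum according to the distance of $x$ from $I$, and controls the pieces with Lemmas~\ref{lem:conthap}, \ref{lem:hap}, \ref{lem: specfunasymp} and Corollary~\ref{cor:kbound}. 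For interpolation it uses the projected biorthogonal system and the inequality $\sum_{x\in X\cap I}k(x,y)\overline{P_Vg_x(y)}\le k(y,y)$. This is precisely the Nitzan--Olevskii comparison, and it buys you a shorter proof with \emph{fewer} kernel estimates: no control of $\mathrm{tr}(C_R-C_R^2)$ is ever needed, only the weak localization and the averaged trace of the kernel.

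There is also a factual slip in your item~(i): for $|x|$ outside $\supp q$ the diagonal $k(x,x)$ is \emph{not} equal to $\tfrac{|\Lambda^{1/2}|}{\pi}$. Equation~\eqref{eq:kbounded} shows that for $y>a$
\[
k(y,y)=\tfrac{1}{\pi}\Bigl(|\Lambda^{1/2}|+\int_{\Lambda^{1/2}}\Re\bigl(R_2(\omega)e^{2i\omega y}\bigr)\,d\omega\Bigr),
\]
and the reflection coefficient $R_2$ is generically nonzero, so $k(y,y)$ oscillates. Consequently $\int_{-R}^{R}k(x,x)\,dx=\tfrac{2R|\Lambda^{1/2}|}{\pi}+O(R^{1/2})$, not $O(1)$ (Lemma~\ref{lem: specfunasymp}). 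This does not kill your argument---the weaker asymptotic suffices for the clustering bound~(iii)---but it means your proof of (iii) must absorb an $O(R^{1/2})$ error, not $O(1)$. A second minor point: Beurling density requires arbitrary intervals $I$, not just centered ones $[-R,R]$; all the kernel lemmas are stated and used for general $I$ in the paper, and your argument should do the same.

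In short: your Landau-style approach is viable (and the paper explicitly remarks that it ``might be successful as well''), but it is not what the paper does; the paper's Nitzan--Olevskii argument is more direct, avoids the spectral analysis of $C_R$, and relies only on the four lemmas above. Your write-up should either correct the description to match the concentration-operator method you actually outline and fix the claim about $k(x,x)$, or switch to the dual-frame identity the paper uses.
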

Theorems~\ref{thm:landau-interpol} and \ref{thm:nds} now follow with the
translation lemma (Lemma~\ref{prop:PWLiouville}).  
The proof of  Theorem~\ref{ncdschr} 
will be carried out  in Section
\ref{sec:proofs-necess-dens} 

\subsection{Fundamental Lemmas}
\label{sec:fundamental-lemmas}

Most of the technical work for the proof of  Theorem~\ref{ncdschr} is
coded in some   lemmas on the localization and cancellation properties
of the reproducing kernel
for $\PW [B_q] \Lambda$. 
For the proofs we need information about the scattering theory of the Schr\"odinger operator.

For the spectral representation of the Schr\"odinger operator we
substitute the spectral parameter and  set  $\lambda=
\omega^2$. Thus if $\lambda $ is in the spectral set $\Lambda $, then
$\omega $ is in $\Lambda ^{1/2} = \{ \omega : \omega ^2\in \Lambda
\}$. This harmless, but convenient change of variables explains the
appearance of the set $\Lambda ^{1/2}$ in the formulation of the
density theorems. 

 \begin{prop}[{\cite[23.2]{Weidmann03},\cite[17.C]{Weidmann87}}]
\label{Schroescatt}
 If q satisfies \eqref{item:mcq}, then the eigenfunction equation
 $(\tilde \tau_q-\omega^2)\phi =0$ possesses a system of fundamental
 solutions of  the form 
   \begin{equation}
     \label{eq:fundsols}
     \Phi(\omega, x)= 
     \begin{cases}
       \begin{pmatrix}
         e^{i \omega x} +R_1(\omega) e^{-i \omega x} \\
         T(\omega) e^{-i \omega x}
       \end{pmatrix}
       & ,\qquad x < -a \\ 
       \begin{pmatrix} 
         T(\omega) e^{i \omega x} \\ 
         e^{-i \omega x} +R_2(\omega) e^{i \omega x}
       \end{pmatrix}
       & ,\qquad x > a
     \end{cases}
   \end{equation}
   The \emph{scattering matrix}
   \begin{equation}
     \label{eq:scatmat}
     \begin{pmatrix}
       T(\omega)& R_1(\omega) \\
       R_2(\omega)& T(\omega)
     \end{pmatrix}
   \end{equation}
   is unitary for all $\omega \in \br ^+$, 
  and  the  entries $T,R_1,R_2$  are holomorphic in $\omega$ for  $\omega \in \bc\setminus \br_0^-$.

The spectral measure of $\tilde \tau _q$ with respect to this
fundamental system is given by the matrix-valued Lebesgue measure  $
 \Big(
\begin{smallmatrix}
  d\omega & 0 \\ 0 & d\omega 
\end{smallmatrix}
\Big)
 = \mathrm{I_2} d\omega $. Consequently the 
  operator 
    \begin{equation}
     \label{eq:spm}
   \mF_{B_q} f (\omega)=\frac 1 {\sqrt{2 \pi}} \int_\br f(x) \overline{\Phi (\omega, x)} dx \,,
   \end{equation}
is unitary on $\Ltwo $
and diagonalizes $B_q$, i.e., 
\[
\mF_{B_q} B_q \inv{\mF_{B_q}} G (\omega)= \omega ^2  G(\omega) 
\]
for all $G \in L^2(\br,I_2 d\omega)$. 
The inverse of $\mF _{B_q}$ is 
   \begin{equation}
     \label{eq:ispm}
     \mF_{B_q}^{-1} G (x)=\frac 1 {\sqrt{2 \pi}} \int_{\br_0^+}  G(\omega)\cdot \Phi (\omega, x) d \omega \,
   \end{equation}
for  $G \in L^2(\br,I_2 d\omega)$. 
 \end{prop}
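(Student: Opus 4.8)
The final statement is Proposition~\ref{Schroescatt}, which is cited from Weidmann's textbooks. Since it is a citation rather than an original result, my proof proposal will outline how one assembles it from standard scattering theory of one-dimensional Schr\"odinger operators with compactly supported potential.

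\medskip

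The plan is to construct the Jost-type solutions explicitly on the exterior region where $q$ vanishes, match them across the support of $q$, and then identify the spectral measure by the standard limiting-absorption / Weyl $m$-function computation. First I would observe that for $|x|>a$ the equation $-\phi'' = \omega^2\phi$ has the plane-wave basis $e^{\pm i\omega x}$, so any global solution is a linear combination of these on each of the two half-lines; the transfer matrix across $[-a,a]$ (which exists and is holomorphic in $\omega$ because the potential is bounded and compactly supported, so solutions depend analytically on the spectral parameter) links the left coefficients to the right coefficients. Imposing the normalization that the first fundamental solution behaves like $e^{i\omega x}+R_1(\omega)e^{-i\omega x}$ on the left and is a pure outgoing multiple $T(\omega)e^{i\omega x}$ on the right (and symmetrically for the second solution) pins down $T, R_1, R_2$ as explicit rational-in-transfer-matrix-entries functions of $\omega$, hence holomorphic on $\bc\setminus\br_0^-$ (the cut coming only from the branch of $\omega\mapsto\omega$ relative to $\lambda=\omega^2$). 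Unitarity of the scattering matrix for real $\omega$ follows from conservation of the Wronskian: $W(\Phi_j,\overline{\Phi_k})$ is $x$-independent, and evaluating it at $x\to+\infty$ and $x\to-\infty$ using the plane-wave asymptotics gives exactly the relations $|T|^2+|R_1|^2=1$, $|T|^2+|R_2|^2=1$, $T\overline{R_2}+R_1\overline{T}=0$.

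\medskip

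Next I would compute the spectral measure. The cleanest route is to use the matrix Weyl--Titchmarsh function: with the chosen fundamental system $\Phi=(\phi_1,\phi_2)$, the half-line Weyl solutions $\psi_\pm$ (lying left/right in $L^2$ for $\Im z>0$) are, up to constants, $\phi_1$ for $x\to-\infty$ and $\phi_2$ for $x\to+\infty$ — in fact for $\Im\omega>0$ the solution $e^{i\omega x}$ decays at $+\infty$ and $e^{-i\omega x}$ decays at $-\infty$, which is precisely why the top entry of $\Phi$ involves $T(\omega)e^{i\omega x}$ on the right and the bottom entry $T(\omega)e^{-i\omega x}$ on the left. The $2\times2$ spectral matrix $\rho$ is then recovered from the boundary values of the matrix $M(z)$ built from these Weyl solutions via the Stieltjes inversion formula $d\rho(\lambda)=\tfrac1\pi\lim_{\epsilon\downarrow0}\Im M(\lambda+i\epsilon)\,d\lambda$. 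Because $q$ is compactly supported, on the exterior region the Green's function / resolvent kernel is an elementary expression in $T,R_1,R_2$ and the plane waves, and a direct computation of $\Im M$ collapses — using $|T|^2=1-|R_1|^2$ etc. — to the constant matrix $\tfrac12 I_2$ in the $\omega$ variable; converting $d\lambda = 2\omega\,d\omega$ and absorbing constants into the $\tfrac1{\sqrt{2\pi}}$ normalization of $\mF_{B_q}$ gives exactly $d\rho = I_2\,d\omega$ on $\br_0^+$ with zero singular part. The unitarity and diagonalization statements for $\mF_{B_q}$ then follow from the general Weidmann expansion theorem (the Theorem quoted just before Definition~\ref{pesbw} in the excerpt), specialized to this spectral matrix, and the inversion formula \eqref{eq:ispm} is its adjoint.

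\medskip

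The main obstacle is the spectral-measure computation: identifying that the matrix-valued spectral density is exactly $I_2\,d\omega$ requires carefully tracking the normalization constants through the Weyl-function inversion, and one must be sure the absolutely continuous spectrum fills $[0,\infty)$ with no eigenvalues or singular continuous part. The latter is a known consequence of the compact support of $q$ (the Jost solutions exist for all real $\omega\neq0$, so there is no embedded singular spectrum, and a compactly supported $q$ in one dimension can produce only finitely many negative eigenvalues, which lie outside $\br_0^+$ and hence outside any spectral set $\Lambda\subseteq\br^+$ of interest); alternatively one invokes \cite[Thm.~3.2]{Teschl08} as already cited in the remarks after the Weidmann expansion theorem. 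Everything else — the construction of $\Phi$, the holomorphy of $T,R_1,R_2$, and the unitarity of the scattering matrix — is routine ODE bookkeeping with the Wronskian, so I would present those briefly and devote the bulk of the argument to pinning down the density and citing the structural results of \cite{Weidmann03,Weidmann87} for the expansion itself.
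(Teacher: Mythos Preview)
The paper does not prove this proposition at all: it is stated with the citation header \cite[23.2]{Weidmann03}, \cite[17.C]{Weidmann87} and is used as input from the literature. Your outline --- Jost solutions on the exterior, transfer matrix matching to define $T,R_1,R_2$, Wronskian conservation for unitarity of the scattering matrix, and the Weyl--Titchmarsh limiting-absorption computation for the spectral density --- is exactly the standard argument behind those references, and you correctly flagged from the outset that this is a cited result rather than an original one.

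One small remark on your side comment about negative eigenvalues: in the specific setting of the paper $B_q$ is unitarily equivalent via the Liouville transform to $A_p = -D(pD) \geq 0$, so in fact $B_q \geq 0$ and there are no negative eigenvalues to worry about; your more general appeal to finitely many bound states for compactly supported $q$ is not needed here, though it is not wrong.
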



With this notation the reproducing kernel for $\PW [B_q] \Lambda$ is
simply 
\begin{equation}
  \label{eq:specfun}
  k(x, y)= k_\Lambda(x, y)=\frac 1 {2 \pi}\int_{\Lambda^{1/2}}
  \Phi(\omega, x) \cdot \overline{\Phi(\omega, y)} \, d \omega \,. 
\end{equation}
In this case it is obvious that the kernel exists for $\abs \Lambda
\leq \infty$ (see the remark after Proposition \ref{prop:basicpw}). 

The following three lemmas   describe several   properties
of the reproducing kernel. For the space  of \bdl\ functions $\{ f\in
\Ltwo : \supp \, \hat{f} \subseteq [-\Omega , \Omega ]\}$ the
reproducing kernel is $k(x,y) = \tfrac{\sin \Omega (x-y)}{x-y}$ and
the stated  estimates below are obvious.  For the Paley-Wiener
spaces of variable \bw\ they are highly non-trivial and, even in the model case $(MC_q)$,
they require the full power of the  scattering theory
(Proposition~\ref{Schroescatt}). 
 The following statements may also  be interpreted as 
  subtle cancellation properties of $k$. 


\begin{lem}[Weak localization] \label{lem:conthap}
 Assume that ~\eqref{item:mcq} holds and that $\Lambda $ is a 
 Borel set in $\br ^+_0$ with finite Lebesgue measure.   Let $k$ be the reproducing kernel for
 $PW_\Lambda(B_q)$. Then for every $\epsilon >0$ there is a constant
 $b_\epsilon$, such that
 \begin{equation}\label{eq:hapcont}
\sup _{x\in \br } \int_{\abs{y-x} >b_\epsilon}\abs{k(x,y)}^2 dy <
\epsilon ^2 \,.
\end{equation}
\end{lem}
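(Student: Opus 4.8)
The plan is to use the explicit form of the fundamental solutions $\Phi(\omega,\cdot)$ on $\br\setminus[-a,a]$ furnished by Proposition~\ref{Schroescatt} together with the unitarity of the scattering matrix, treating separately the region where $x$ is bounded and the region where $|x|$ is large. Throughout I use that $|\Lambda|<\infty$ forces $|\Lambda^{1/2}|<\infty$ (indeed $|\Lambda^{1/2}|\le|\Lambda|^{1/2}$), and that $\Phi$ is bounded on $\Lambda^{1/2}\times[-a,a]$: for $|x|>a$ this is immediate from \eqref{eq:fundsols} since $|T(\omega)|,|R_1(\omega)|,|R_2(\omega)|\le1$, and on $[-a,a]$ it follows from a Gronwall estimate for $-\phi''+q\phi=\omega^2\phi$ with $q$ bounded and compactly supported. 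In particular $k$ is well defined, jointly continuous and Hermitian (Proposition~\ref{prop:basicpw}). It then suffices to bound $\int_{|y-x|>b}|k(x,y)|^2\,dy$ uniformly for $|x|\le M$ and, separately, for $x\ge M$ and $x\le-M$, with $M$ chosen large.

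For $|x|\le M$: the reproducing identity gives $\|k(x,\cdot)-k(x',\cdot)\|_2^2=k(x,x)-k(x,x')-k(x',x)+k(x',x')$, so by joint continuity of $k$ the map $x\mapsto k(x,\cdot)$ is continuous from $\br$ into $L^2(\br)$; hence $\{k(x,\cdot):|x|\le M\}$ is compact in $L^2(\br)$, and a compact subset of $L^2$ has uniformly small tails, so $\sup_{|x|\le M}\int_{|y|>t}|k(x,y)|^2\,dy\to0$ as $t\to\infty$.

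For $x\ge M>a$ (the case $x\le-M$ is entirely analogous, using the form of $\Phi$ on $(-\infty,-a)$) I split the $y$-integral into $y>a$, $-a\le y\le a$ and $y<-a$. For $y>a$, inserting \eqref{eq:fundsols} and using $SS^*=I$ (in particular $|T(\omega)|^2+|R_2(\omega)|^2=1$) gives $\Phi(\omega,x)\cdot\overline{\Phi(\omega,y)}=2\cos\omega(x-y)+2\Re\!\bigl(R_2(\omega)e^{i\omega(x+y)}\bigr)$, hence $k(x,y)=\kappa(x-y)+\eta(x+y)$ with $\kappa,\eta\in L^2(\br)$ (Fourier transforms of $L^2$-functions supported on $\Lambda^{1/2}$); the first summand contributes at most $\int_{|s|>b}|\kappa(s)|^2\,ds$ and the second at most $\int_{s>x+a}|\eta(s)|^2\,ds\le\int_{s>M}|\eta(s)|^2\,ds$. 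For $y<-a$, the relation $T(\omega)\overline{R_1(\omega)}+R_2(\omega)\overline{T(\omega)}=0$ (again from $SS^*=I$) collapses the cross term and yields $k(x,y)=\Re\theta(x-y)$ with $\theta(t)=\tfrac1\pi\int_{\Lambda^{1/2}}T(\omega)e^{i\omega t}\,d\omega\in L^2(\br)$; since $x-y>x+a>M$ this contributes at most $\int_{s>M}|\theta(s)|^2\,ds$. The only genuinely new point is the band $-a\le y\le a$, where $\Phi(\omega,y)$ is not explicit; here I show instead that $\int_{-a}^a|k(x,y)|^2\,dy\to0$ as $x\to+\infty$. Writing $\Phi(\omega,x)\cdot\overline{\Phi(\omega,y)}=\mu_1(\omega,y)e^{i\omega x}+\mu_2(\omega,y)e^{-i\omega x}$ with $\mu_1,\mu_2$ bounded on $\Lambda^{1/2}\times[-a,a]$, expanding $|k(x,y)|^2$ and integrating in $y$ (Fubini is legitimate because $\int_{-a}^a\!\int_{\Lambda^{1/2}}\!\int_{\Lambda^{1/2}}|\mu_i(\omega,y)|\,|\mu_j(\omega',y)|\,d\omega\,d\omega'\,dy<\infty$) turns $\int_{-a}^a|k(x,y)|^2\,dy$ into a finite sum of integrals of the shape $\int_\br N(u)e^{iux}\,du$ with $N\in L^1(\br)$ (each $N$ is the pushforward of a bounded function on $\Lambda^{1/2}\times\Lambda^{1/2}$ under $(\omega,\omega')\mapsto\pm\omega\pm\omega'$), so Riemann--Lebesgue forces the limit $0$.

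Finally I would fix $M$ so large that the $\eta$-, $\theta$- and middle-band contributions are each $<\epsilon^2$, and then fix $b$ so large that $\int_{|s|>b}|\kappa(s)|^2\,ds<\epsilon^2$ and, for this $M$, that the compactness bound of the second paragraph holds with $t=b-M$; adding the estimates over the various regions yields \eqref{eq:hapcont} for a suitable $b_\epsilon$. The main obstacle is the middle band $|y|\le a$: the sinc-kernel picture gives no help there, and one must combine the boundedness of $\Phi$ on $\Lambda^{1/2}\times[-a,a]$ (from the scattering/ODE analysis) with the Riemann--Lebesgue argument above; everything else reduces to tails of fixed $L^2$-functions built from the scattering coefficients.
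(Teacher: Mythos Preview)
Your proof is correct and follows essentially the same route as the paper's: a case split according to whether the base point lies in the compact band where $q$ lives or outside it, a compactness/uniform-tail argument for the bounded region, and for the unbounded region the explicit scattering form of the kernel together with the unitarity relation $T\overline{R_1}+R_2\overline{T}=0$ and Riemann--Lebesgue for the mixed band. Your organization with the auxiliary cut-off $M$ (controlling the $\eta$-, $\theta$- and middle-band terms by choosing $M$ large, then $b$ large for the $\kappa$-tail and the compactness step) is a harmless variant of the paper's, which instead exploits $x+y>|x-y|$ to bound the $\kappa$- and $\eta$-contributions with a single parameter.

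One point where your argument is actually cleaner than the paper's: you obtain the continuity of $x\mapsto k(x,\cdot)$ in $L^2$ directly from the reproducing identity $\|k(x,\cdot)-k(x',\cdot)\|^2=k(x,x)-2\Re k(x,x')+k(x',x')$ and the joint continuity of $k$ established in Proposition~\ref{prop:basicpw}; the paper instead re-derives this continuity via the spectral representation and a boundedness result for $\Phi$ due to Stolz. Two small remarks on presentation: your ``Gronwall'' claim for uniform boundedness of $\Phi$ on $\Lambda^{1/2}\times[-a,a]$ needs a two-regime argument (the naive estimate blows up as $\omega\to0^+$), and your pushforward $N\in L^1(\br)$ is indeed correct (it is a marginal of an $L^1(\br^2)$ function after a linear change of variables), though invoking the two-dimensional Riemann--Lebesgue lemma directly, as the paper does, avoids that extra step.
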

\begin{lem}[Homogenous approximation property]\label{lem:hap}
 Assume that ~\eqref{item:mcq} holds and that $\Lambda $ is a bounded
 Borel set in $\br ^+_0$. Furthermore   assume that $X$ is a  set of stable
 sampling for $PW_\Lambda (B_q)$. Then for every $\epsilon >0$ there
 is a constant $b_\epsilon$, such that 
 \begin{equation}\label{eq:hapclass}
\sup _{y\in \br } \sum_{\substack{x \in X\\\abs{x-y} >b_\epsilon}}\abs{k(x,y)}^2  <
\epsilon ^2 \,.
\end{equation}
\end{lem}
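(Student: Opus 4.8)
\textbf{Proof plan for Lemma~\ref{lem:hap} (Homogeneous approximation property).}

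The plan is to deduce the discrete HAP \eqref{eq:hapclass} from the continuous weak localization \eqref{eq:hapcont} of Lemma~\ref{lem:conthap}, together with the stable sampling hypothesis. The key mechanism is that a stable sampling set for $PW_\Lambda(B_q)$ is also a stable sampling set for the \emph{translated} Paley-Wiener data in a way that is uniform in the translation amount; combined with the fact that $\set{k(x,\cdot)\colon x\in X}$ is a frame, the coefficients $(k(x,y))_{x\in X}$ of the reproducing kernel $k(\cdot,y)$ itself in this frame must have small tail sums far from $y$, \emph{provided} we know that the frame expansion of $k(\cdot,y)$ localizes. The cleanest route is: fix $y$, regard $k(\cdot,y)\in PW_\Lambda(B_q)$, and compare $\sum_{x\in X,\,|x-y|>b}|k(x,y)|^2=\sum_{x\in X,\,|x-y|>b}|\langle k(\cdot,y),k(x,\cdot)\rangle|^2$ with the continuous analogue $\int_{|z-y|>b}|k(z,y)|^2\,dz$ controlled by Lemma~\ref{lem:conthap}.

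Concretely, I would proceed as follows. First, by translating, reduce to estimating $\sum_{x\in X,\,|x|>b}|k(x,0)|^2$ uniformly — except that $B_q$ is not translation invariant, so instead I fix an arbitrary $y\in\br$ and do not translate the operator; the compact support of $q$ in $[-a,a]$ means that for $|y|$ large the kernel $k(\cdot,y)$ is close (in $L^2$ norm, on the relevant region) to the free sinc-kernel $\frac{1}{\pi}\sin(\Omega^{1/2}(\cdot-y))/(\cdot-y)$ — but we should avoid relying on this and instead argue abstractly. Second, use the frame property: since $\set{k(x,\cdot)}_{x\in X}$ is a frame with bounds $A,B$, for any $g\in PW_\Lambda(B_q)$ we have $\sum_{x\in X}|\langle g,k(x,\cdot)\rangle|^2\le B\|g\|^2$, and more importantly, if $P=c_\Lambda(B_q)$ and $g$ is supported (as an $L^2$ function) mostly far from a window, its frame coefficients inside that window are small. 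Third, and this is the crux: split $k(\cdot,y)=g_1+g_2$ where $g_1$ is (a spectral projection of) a truncation of $k(\cdot,y)$ to $\{|z-y|\le R\}$ and $g_2=k(\cdot,y)-g_1$; by Lemma~\ref{lem:conthap} (weak localization of $k$ itself) $\|g_2\|$ is small once $R$ is large. Then $\sum_{x\in X,\,|x-y|>b}|k(x,y)|^2\le 2\sum_{|x-y|>b}|\langle g_1,k(x,\cdot)\rangle|^2+2\sum_{x\in X}|\langle g_2,k(x,\cdot)\rangle|^2$; the second term is $\le 2B\|g_2\|^2$, small. For the first term, $g_1$ is essentially localized near $y$ while $k(x,\cdot)$ for $|x-y|>b$ is localized (again by Lemma~\ref{lem:conthap}, since $\int_{|z-x|>b/2}|k(x,z)|^2dz<\epsilon^2$) away from $y$; so $\langle g_1,k(x,\cdot)\rangle$ is small, and summing over the relatively separated set $X$ (using that $X$ meets each unit interval in at most $n_0$ points) gives a tail bound.

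The main obstacle is making the ``first term'' estimate genuinely uniform in $y$ and summable over $x\in X$: one needs a quantitative decay of $|k(x,y)|$ in $|x-y|$, not merely the $L^2$-integrated statement of Lemma~\ref{lem:conthap}, in order to sum $\sum_{x\in X,\,|x-y|>b}|\langle g_1,k(x,\cdot)\rangle|^2$ against the relatively separated geometry of $X$. I expect this is handled by iterating Lemma~\ref{lem:conthap}: write $\langle g_1,k(x,\cdot)\rangle=\int k(y',y)\overline{k(y',x)}\,w(y')\,dy'$ over $|y'-y|\le R$, bound $|k(y',x)|$ pointwise by the reproducing property $|k(y',x)|=|\langle k(x,\cdot),k(y',\cdot)\rangle|\le \|k(x,\cdot)\|\cdot\|k(y',\cdot)\mathbf{1}_{\text{near }y'}\|+(\text{tail})$, and feed the tail into Cauchy–Schwarz plus Lemma~\ref{lem:conthap} on shells $\{2^j\le|z-x|\le 2^{j+1}\}$ to get a geometrically decaying bound in $b$. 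Then the relative separation of $X$ turns the shell sums into a convergent geometric series, and choosing $R$ then $b$ large makes the whole thing $<\epsilon^2$. The only genuinely $B_q$-specific input is Lemma~\ref{lem:conthap} (which is where scattering theory enters); everything after that is a soft frame-theoretic and covering argument, essentially the same as in \cite{Nitzan12}.
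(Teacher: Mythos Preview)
Your plan has a genuine gap in the ``first term'' estimate, and you have correctly identified the obstacle but not actually overcome it. Lemma~\ref{lem:conthap} is purely qualitative: for every $\epsilon$ there is \emph{some} $b_\epsilon$ with $\int_{|z-x|>b_\epsilon}|k(x,z)|^2\,dz<\epsilon^2$. It gives no rate, hence no decay on dyadic shells $\{2^j\le|z-x|\le 2^{j+1}\}$, and so your proposed ``geometric series'' argument does not get off the ground. After your splitting $k(\cdot,y)=g_1+g_2$, each inner product $\langle g_1,k(x,\cdot)\rangle=\int_{|z-y|\le R}k(z,y)\overline{k(z,x)}\,dz$ is indeed small once $|x-y|\gg R$, but only by a fixed amount $\lesssim C_{RK}^{1/2}\epsilon_0$; summing this constant bound over the infinitely many $x\in X$ with $|x-y|>b$ gives nothing. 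Your claim that ``the only genuinely $B_q$-specific input is Lemma~\ref{lem:conthap}'' is therefore too optimistic.

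The paper's proof uses \emph{more} scattering-theoretic input than Lemma~\ref{lem:conthap}. For $|y|>a$ and $|x|>a$ it invokes the explicit formulas \eqref{eq:specfunas} and \eqref{eq:c47}, which exhibit $x\mapsto k(x,y)$ as a sum of functions of the form $\mF(c_{\Lambda^{1/2}})(x\pm y)$, $\mF(R_jc_{\Lambda^{1/2}})(x\pm y)$, $\mF(Tc_{\Lambda^{1/2}})(x\pm y)$. Since $\Lambda^{1/2}$ is bounded, these are genuine classical bandlimited functions, and the passage from the continuous tail $\int_{|x-y|>b}$ to the discrete tail $\sum_{x\in X,\,|x-y|>b}$ is made via a local Plancherel--Polya inequality (the maximal function $f^\sharp$ trick of \cite{Gr96landau}) together with relative separation of $X$. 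For $|y|\le a$ the paper uses a compactness argument: the sampling operator $f\mapsto(f(x))_{x\in X}$ is bounded into $\ell^2(X)$, so it sends the compact set $\{k(\cdot,y):|y|\le a\}$ (continuity of $y\mapsto k(\cdot,y)$ was established in the proof of Lemma~\ref{lem:conthap}) to a compact set, and Kolmogorov--Riesz for $\ell^2$ gives the uniform tail bound. Neither of these ingredients is available from Lemma~\ref{lem:conthap} alone, and the boundedness hypothesis on $\Lambda$ in Lemma~\ref{lem:hap} (absent from Lemma~\ref{lem:conthap}) is precisely what makes the Plancherel--Polya step work.
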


The proof of these lemmas is deferred to Section \ref{sec:loc-hap-proof}.
\\ \\

\begin{lem}
  \label{lem: specfunasymp}
Assume that  $q$ satisfies  $(MC_q)$ with $\supp \, q \subseteq
[-a,a]$. Let 
$I =[\alpha,\beta]$ be a large,  closed interval. 
Then 
\begin{equation}
  \label{eq:c30}
  \bigabs{\frac 1 {\abs {I}}\int_{ {I}} k(y, y) \, dy - \frac{ \abs
      {\Lambda^{1/2}} } \pi} \leq \frac{2}{\sqrt \pi}\, \frac{\norm{R_1
    c_{\Lambda^{1/2}}}}{ \abs{I}^{1/2}} + \frac{1}{\abs I}
  \int_{-a}^ak(y,y) dy  +  \frac{2a}{\pi |I|} |\Lambda^{1/2} | \, . 
\end{equation}
  As a consequence, 
$$
\lim _{|I| \to \infty } \frac{1}{|I|} \int_{I} k(y, y) \, dy  = 
\frac{ \abs {\Lambda^{1/2}}}{\pi } \, . 
$$
\end{lem}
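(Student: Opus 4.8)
\textbf{Proof plan for Lemma~\ref{lem: specfunasymp}.}
The plan is to compute $k(y,y)$ explicitly from the scattering representation \eqref{eq:specfun}, isolate the ``free'' part of the kernel (which is the constant $\abs{\Lambda^{1/2}}/\pi$) and control the remainder using unitarity of the scattering matrix and the fact that $q$ is supported in $[-a,a]$. First I would split the integration domain $I=[\alpha,\beta]$ into the core piece $I\cap[-a,a]$ and the two tails $I\cap\{x<-a\}$ and $I\cap\{x>a\}$. On the core the integrand $k(y,y)$ is continuous, bounded, and $\abs{I\cap[-a,a]}\le 2a$, so that contribution is at most $\frac1{\abs I}\int_{-a}^a k(y,y)\,dy$, accounting for the second error term; we discard it in the limit.

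For the tails, use the explicit formula \eqref{eq:fundsols}. For $x>a$ we have $\Phi(\omega,x)=\big(T(\omega)e^{i\omega x},\ e^{-i\omega x}+R_2(\omega)e^{i\omega x}\big)^T$, so that
\[
\Phi(\omega,x)\cdot\overline{\Phi(\omega,x)} = \abs{T(\omega)}^2 + 1 + \abs{R_2(\omega)}^2 + 2\Re\!\big(\overline{R_2(\omega)}\,e^{-2i\omega x}\big).
\]
By unitarity of the scattering matrix, $\abs{T(\omega)}^2+\abs{R_2(\omega)}^2=1$, so the $\omega$-independent part is exactly $2$, and \eqref{eq:specfun} gives
\[
k(x,x) = \frac{1}{2\pi}\int_{\Lambda^{1/2}} 2\,d\omega \ +\ \frac{1}{\pi}\int_{\Lambda^{1/2}}\Re\!\big(\overline{R_2(\omega)}\,e^{-2i\omega x}\big)\,d\omega
= \frac{\abs{\Lambda^{1/2}}}{\pi} + \frac{1}{\pi}\int_{\Lambda^{1/2}}\Re\!\big(\overline{R_2(\omega)}\,e^{-2i\omega x}\big)\,d\omega,
\]
and symmetrically for $x<-a$ with $R_1$ in place of $R_2$. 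Integrating the constant $\abs{\Lambda^{1/2}}/\pi$ over the tails recovers $\abs{\Lambda^{1/2}}/\pi$ up to the $\frac{2a}{\pi\abs I}\abs{\Lambda^{1/2}}$ error coming from the excised core. The oscillatory remainder, say on the right tail $J=I\cap\{x>a\}$, is
\[
\frac{1}{\pi}\int_J \int_{\Lambda^{1/2}}\Re\!\big(\overline{R_2(\omega)}\,e^{-2i\omega x}\big)\,d\omega\,dx
= \frac{1}{\pi}\Re\!\int_{\Lambda^{1/2}}\overline{R_2(\omega)}\Big(\int_J e^{-2i\omega x}\,dx\Big)\,d\omega,
\]
and I would bound $\abs{\int_J e^{-2i\omega x}\,dx} = \abs{\frac{e^{-2i\omega\beta}-e^{-2i\omega\alpha_+}}{2i\omega}}$; the clean way is instead to recognize $\int_J e^{-2i\omega x}dx$ as (a dilate of) a Fejér-type kernel, or simply to apply Cauchy--Schwarz in $\omega$ together with Plancherel in $x$. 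Concretely, writing $g_\omega = \overline{R_2(\omega)}\,c_{\Lambda^{1/2}}(\omega)$, the double integral is $\langle \widehat{g\,c_J}\rangle$-type and is dominated by $\abs J^{1/2}\norm{R_2\,c_{\Lambda^{1/2}}}_{L^2}$ via Cauchy--Schwarz; dividing by $\abs I\ge \abs J$ yields the term $\frac{2}{\sqrt\pi}\,\norm{R_1 c_{\Lambda^{1/2}}}\,\abs I^{-1/2}$ after collecting the two tails and using $\norm{R_1 c_{\Lambda^{1/2}}}=\norm{R_2 c_{\Lambda^{1/2}}}$ (again from scattering-matrix unitarity, $\abs{R_1}=\abs{R_2}$ on $\br^+$).

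Assembling the three pieces — core bounded by $\frac1{\abs I}\int_{-a}^a k(y,y)\,dy$, the excised-constant correction $\frac{2a}{\pi\abs I}\abs{\Lambda^{1/2}}$, and the oscillatory tail bounded by $\frac{2}{\sqrt\pi}\norm{R_1 c_{\Lambda^{1/2}}}\abs I^{-1/2}$ — gives \eqref{eq:c30}; letting $\abs I\to\infty$ with $\norm{R_1 c_{\Lambda^{1/2}}}<\infty$ (finite since $\abs{R_1}\le 1$ and $\abs{\Lambda^{1/2}}<\infty$) forces all three error terms to $0$, proving $\frac1{\abs I}\int_I k(y,y)\,dy\to\abs{\Lambda^{1/2}}/\pi$. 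The main obstacle I anticipate is the bookkeeping in the oscillatory-tail estimate: getting the constant exactly $\frac{2}{\sqrt\pi}$ requires being careful about the $\frac1{\sqrt{2\pi}}$ normalizations in $\mF_{B_q}$, the factor $\frac1{2\pi}$ in \eqref{eq:specfun}, and the endpoint of $J$ (whether it starts at $a$ or at $\alpha$) when one passes from an interval integral to a Plancherel-type bound; everything else is routine once the unitarity identities $\abs T^2+\abs{R_j}^2=1$ and $\abs{R_1}=\abs{R_2}$ are invoked.
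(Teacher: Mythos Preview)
Your proposal is correct and follows essentially the same route as the paper: split $I$ into the core $[-a,a]$ and the two tails, use unitarity of the scattering matrix to extract the constant $\abs{\Lambda^{1/2}}/\pi$ on the tails, and control the oscillatory remainder by Cauchy--Schwarz in $\omega$. The only cosmetic difference is that the paper integrates $\int_J e^{2i\omega y}\,dy$ explicitly to a $\tfrac{\sin(\omega(\beta-a))}{\omega}$ term and then uses $\int_{\br^+}\tfrac{\sin^2 u}{u^2}\,du\le\pi$, whereas you invoke Plancherel for $c_J$ --- these are the same estimate, and your caveat about tracking the $2\pi$-factors to land exactly on $\tfrac{2}{\sqrt\pi}$ is apt (one small slip: the kernel $\int_J e^{-2i\omega x}\,dx$ is Dirichlet/sinc-type, not Fej\'er-type).
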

\begin{proof}
After substituting the fundamental solutions ~\eqref{eq:fundsols} into
\eqref{eq:specfun},  and using the unitarity of the
scattering  matrix~\eqref{eq:scatmat}, we obtain  for  $y > a$ 
\begin{equation} \label{eq:kbounded}
  \begin{split}
    k(y,y)=&\frac{1} {2 \pi}\int_{\Lambda^{1/2}} \big(\underset{=1}{\underbrace{\abs {T(\omega)}^2 + \abs
    {R_2(\omega)}^2}}
    + 1 + 2 \Re (R_2(\omega) e^{2 i  \omega y}) \big)\, d \omega \\
    =& \frac{1}{\pi}\big( \abs {\Lambda^{1/2}} +  \int_{\Lambda^{1/2}} \Re (R_2(\omega) e^{-2 i 
      \omega y})\, d \omega \big)\,.
  \end{split}
\end{equation}
 If $y < -a$, then similarly  
\[
k(y,y)= \frac{1}{\pi}\big( \abs {\Lambda^{1/2}} +  \int_{\Lambda^{1/2}} \Re (R_1(\omega) e^{- 2 i 
      \omega y})\, d \omega \big) \,.
\]
In the following we decompose the given interval $I$ into subintervals 
\[
I=[\alpha,\beta]= \big(I \cap (-\infty,-a)\big) \cup \big(I \cap [-a,a]\big) \cup \big(I \cap (a,\infty)\big)= I_1 \cup I_2 \cup I_3 \,.
\]
Without loss of generality we assume that $[-a,a] \subseteq I$ (If
$[-a,a] \not \subseteq I$, then some  of the integrals $\int _{I_k}
\dots $ are zero.)  Then
\begin{align*}
   \frac{1}{\abs {I}} \int_{I_3} k(y, y) \, dy &= \frac{|I_3|}{\pi
   \abs{I}}  \abs{\Lambda^{1/2}} + \frac 1 {\pi \abs {I}} \int_{I_3}
 \int_{\Lambda^{1/2}} \Re (R_2(\omega) e^{2 i      \omega y})\, d
 \omega \, dy \\ 
&  =\frac{|I_3|}{\pi \abs{I}}  \abs {\Lambda^{1/2}} + 
 \frac 1 {\pi \abs {I}} \Re \Big(\int_{\Lambda^{1/2}} R_2(\omega)  \int_{a}^{\beta}  e^{2 i  \omega y}\,dy \,d \omega \Big) \\
&  =\frac{|I_3|}{\pi \abs{I}}  \abs {\Lambda^{1/2}} + \frac 1 {\pi \abs {I}}
\Re \Big(\int_{\Lambda^{1/2}} R_2(\omega)  e^{  i \omega(a+
  \beta)} \frac{\sin (\omega (\beta-a))}{\omega } d \omega \Big)
\,, 
\end{align*}
and, with the substitution $u = \omega (\beta -a)$, 
\begin{align*}
 \frac 1 {\pi \abs {I}}\bigabs{
  \int_{\Lambda^{1/2}} R_2(\omega) e^{  i \omega(a+ \beta)}
  \frac{\sin (\omega (\beta-a))}{\omega}  d \omega}
   \leq &\frac { \norm{R_2 c_{\Lambda^{1/2}}}} {\pi \abs {I}} \Big((\beta-a)\int_{(\beta-a){\Lambda^{1/2}}}\frac{\sin^2 u}{u^2} du \Big)^{1/2} \\
 \leq& \frac{\norm{R_2 c_{\Lambda^{1/2}}}}{\sqrt \pi}{\abs {I}^{-1/2}} \,. 
\end{align*}
 By a similar calculation the contribution of $I_1$ yields 
\[
 \frac 1 {\abs {I}} \int_{ {I_1}} k(y, y) \, dy = \frac{|I_1|}{\pi \abs{I}}  \abs {\Lambda^{1/2}} + 
\frac 1 {\pi \abs {I}} \int_{I_1} \int_{\Lambda^{1/2}} \Re (R_1(\omega) e^{ -2 i      \omega y})\, d \omega \, dy 
\]
and
\[
\frac 1 {\pi \abs {I}} \bigabs{\int_{I_1} \int_{\Lambda^{1/2}} \Re (R_1(\omega) e^{ -2 i      \omega y})\, d \omega \, dy}
  \leq \frac{\norm{R_1 c_{\Lambda^{1/2}}}}{\sqrt \pi} \abs{I}^{-1/2} \,.
\]
After summing these contributions,   we obtain  
\begin{align}
\bigabs{\frac 1 {\abs {I}} \int_{ {I}} k(y, y) \, dy &-\frac{\abs {\Lambda^{1/2}}}{\pi}} \leq
\bigabs{  \frac 1  {\abs {I}} \int_{ {I_1 \cup I_3}} k(y, y) \, dy -
  \frac{|\Lambda ^{1/2}|}{\pi}} +
   \frac 1  {\abs {I}} \bigabs{\int_{ [-a,a]} k(y, y) \, dy} \notag \\
&\leq  \frac{1}{\sqrt \pi \abs{I}^{1/2}} \Big( \norm{R_1 c_{
    \Lambda^{1/2}}} + \norm{R_2 c_{\Lambda^{1/2}}} \Big)+ \frac{1}{\abs I}
\int_{-a}^a \abs{k(y,y)} dy  +  \frac{2a}{\pi |I|} |\Lambda^{1/2} | \, .
\end{align}
As the matrix \eqref{eq:scatmat} is unitary,   $\abs{R_1(\omega)}=\abs{R_2(\omega)}$ for all $\omega \geq 0$, and
 $\norm{R_1 c_{
    \Lambda^{1/2}}}=\norm{R_2 c_{\Lambda^{1/2}}}$, and~\eqref{eq:c30} follows. 
%
\end{proof}

From \eqref{eq:kbounded} and the continuity of $k$  we extract a crucial property of the reproducing
kernel $k$. 
\begin{cor} \label{cor:kbound}
If  $q$ satisfies condition  $(MC_q)$, then the diagonal of the kernel
$k$ is uniformly bounded:
$$
\sup _{y\in \br } k(y,y) = C_{RK} <\infty \, .
$$ 
\end{cor}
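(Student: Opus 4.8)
The plan is to split the real line into the two half-lines $\{\abs{y}>a\}$, where the explicit scattering form of the kernel is available, and the compact interval $[-a,a]$, where only continuity is used. On the half-lines the bound is immediate from the computation already carried out in the proof of Lemma~\ref{lem: specfunasymp}.

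Concretely, for $y>a$ formula \eqref{eq:kbounded} gives
\[
k(y,y)=\frac{1}{\pi}\Big(\abs{\Lambda^{1/2}}+\int_{\Lambda^{1/2}}\Re\big(R_2(\omega)e^{-2i\omega y}\big)\,d\omega\Big).
\]
Since the scattering matrix \eqref{eq:scatmat} is unitary for every $\omega\in\br^+$, each of its entries has modulus at most $1$; in particular $\abs{R_2(\omega)}\le 1$, so the integrand is bounded by $1$ in absolute value and $0\le k(y,y)\le 2\abs{\Lambda^{1/2}}/\pi$ for $y>a$. The analogous computation with $R_1$ in place of $R_2$ yields the same bound for $y<-a$. (Here one uses $\abs{\Lambda^{1/2}}<\infty$, which is in force whenever the kernel \eqref{eq:specfun} is defined.)

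On $[-a,a]$ the function $y\mapsto k(y,y)$ is continuous --- by Proposition~\ref{prop:basicpw} and the remark following it, the reproducing kernel $k$ of $PW_\Lambda(B_q)$ is jointly continuous --- hence it attains a finite maximum $M:=\max_{\abs{y}\le a}k(y,y)$ on this compact set. Combining the three regions, $\sup_{y\in\br}k(y,y)\le\max\big(M,\,2\abs{\Lambda^{1/2}}/\pi\big)=:C_{RK}<\infty$.

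There is essentially no obstacle here: all the substance sits in the scattering representation of Proposition~\ref{Schroescatt} and the identity \eqref{eq:kbounded}, both already established, together with the trivial observation that a unitary $2\times 2$ matrix has entries of modulus at most $1$. If one wished to avoid invoking Lemma~\ref{lem: specfunasymp}, the same two displays could be reproduced directly from \eqref{eq:fundsols}, \eqref{eq:specfun} and the unitarity of the scattering matrix.
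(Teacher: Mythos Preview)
Your proof is correct and follows exactly the approach the paper indicates in the sentence preceding the corollary: use the explicit expression \eqref{eq:kbounded} together with $\abs{R_j(\omega)}\le 1$ for $\abs{y}>a$, and continuity of $k$ on the compact interval $[-a,a]$. You have simply spelled out the details the paper leaves implicit.
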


In the following lemma we gather some facts about  frames and Riesz
sequences  in a
reproducing kernel Hilbert space $H$. 
\begin{lem}\label{lem:RB-kernel}
Let $H$ be a reproducing kernel Hilbert space with  kernel $k$. 
Assume that $\{ k(x,\cdot ): x\in X\}$ is a frame for $H$ with
canonical dual frame $\{g_x: x\in X\}$. 
Then $k$ and $g_x$ satisfy the following inequalities:
\begin{align}
 &  \sum_{x \in X} k(x,y) \overline{g_x(y)} = k(y,y)  \label{eq:c41} \\
& \sum_{x \in X}\abs{g_x(y)}^2 \leq C k(y,y)  \label{eq:c42}  \\
& \sup_{x \in X}\abs{\inprod{k(x,\cdot), g_x}} \leq 1 \label{eq:c43} \\
 & \sup _{x\in X} \norm{g_x} = C <\infty \, .  \label{eq:c44}
\end{align}
If $\{ k(x,\cdot ): x\in X\}$ is a Riesz sequence for a subspace
$V\subseteq H$ with biorthogonal basis  $\{g_x: x\in X\}\subseteq V$,
then \eqref{eq:c41} is replaced by the inequality
\begin{equation}
  \label{eq:c45}
    \sum_{x \in X} k(x,y) \overline{g_x(y)} \leq k(y,y)  \, . 
\end{equation}
Instead of \eqref{eq:c44} holds equality $\abs{\inprod{k(x,\cdot),
    g_x}}=1 $ for all $x\in X$. 
\end{lem}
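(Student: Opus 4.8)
The plan is to derive everything from the reproducing identity of $H$ together with the elementary duality theory of frames and Riesz bases; no further input is required. Throughout write $k_y:=k(y,\cdot)$, so that $f(y)=\inprod{f,k_y}$ for $f\in H$, and hence $k(x,y)=k_x(y)=\inprod{k_x,k_y}$ and $g_x(y)=\inprod{g_x,k_y}$, so that $\overline{g_x(y)}=\inprod{k_y,g_x}$. In the frame case I use the canonical dual $g_x=S^{-1}k_x$, where $Sf=\sum_{x\in X}\inprod{f,k_x}k_x$ is the frame operator; recall that $\{g_x\}$ is then itself a frame whose frame operator is $S^{-1}$, so $\sum_x\abs{\inprod{f,g_x}}^2\le A^{-1}\norm f^2$ with $A$ the lower frame bound of $\{k_x\}$, and $f=\sum_x\inprod{f,g_x}k_x=\sum_x f(x)g_x$ for all $f\in H$.

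For \eqref{eq:c41} I would apply the reconstruction formula $f=\sum_x\inprod{f,g_x}k_x$ to $f=k_y$, which gives $k_y=\sum_x\overline{g_x(y)}\,k_x$; pairing with $k_y$ (equivalently, evaluating at $y$) yields $k(y,y)=\inprod{k_y,k_y}=\sum_x\overline{g_x(y)}\,k(x,y)$, and since the left-hand side is real this is exactly \eqref{eq:c41} and shows the series sums to a nonnegative number. For \eqref{eq:c42}, $\sum_x\abs{g_x(y)}^2=\sum_x\abs{\inprod{g_x,k_y}}^2\le A^{-1}\norm{k_y}^2=A^{-1}k(y,y)$ by the Bessel bound of the dual frame. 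For \eqref{eq:c44}, applying that same Bessel bound to $f=g_x$ gives $\norm{g_x}^4\le\sum_{x'}\abs{\inprod{g_x,g_{x'}}}^2\le A^{-1}\norm{g_x}^2$, hence $\sup_x\norm{g_x}\le A^{-1/2}<\infty$.

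The inequality \eqref{eq:c43} is the one point needing a small observation. For every $f\in H$, $\inprod{Sf,f}=\sum_{x'\in X}\abs{f(x')}^2\ge\abs{f(x)}^2=\abs{\inprod{f,k_x}}^2$. Take $f=g_x$: since $Sg_x=k_x$, the left-hand side equals $\inprod{k_x,g_x}=\inprod{S^{-1}k_x,k_x}\ge 0$, a nonnegative real number $t$, while the right-hand side is $\abs{\inprod{g_x,k_x}}^2=t^2$. Thus $t\ge t^2$, so $0\le t\le 1$, which is \eqref{eq:c43}.

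For the Riesz case, let $P_V$ denote the orthogonal projection of $H$ onto $V$. Since $\{k_x\}$ is a Riesz basis of $V$ with biorthogonal system $\{g_x\}\subseteq V$, every $h\in V$ expands as $h=\sum_x\inprod{h,g_x}k_x$; applying this to $h=P_Vk_y$ and using $\inprod{P_Vk_y,g_x}=\inprod{k_y,g_x}=\overline{g_x(y)}$ (legitimate because $g_x\in V$) gives $P_Vk_y=\sum_x\overline{g_x(y)}\,k_x$, whence $\norm{P_Vk_y}^2=\inprod{P_Vk_y,k_y}=\sum_x\overline{g_x(y)}\,k(x,y)$. Since $\norm{P_Vk_y}\le\norm{k_y}$, this yields $\sum_x k(x,y)\overline{g_x(y)}=\norm{P_Vk_y}^2\le k(y,y)$, that is, \eqref{eq:c45}. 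Biorthogonality $\inprod{k_x,g_{x'}}=\delta_{xx'}$ then gives $\abs{\inprod{k_x,g_x}}=1$ for every $x$, replacing the inequality \eqref{eq:c43}, while \eqref{eq:c42} and \eqref{eq:c44} remain valid (with the Bessel and Riesz bounds of $\{g_x\}$ in place of $A^{-1}$). There is no genuine difficulty here; the only point requiring care is the conjugate-linearity convention, so that $g_x(y)$, $\overline{g_x(y)}$ and $k(x,y)$ are matched up correctly.
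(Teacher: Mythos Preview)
Your proof is correct and follows essentially the same route as the paper: for \eqref{eq:c41} and \eqref{eq:c45} you expand $k_y$ (respectively $P_Vk_y$) in the dual system and pair against $k_y$, and for \eqref{eq:c42} you use the Bessel bound of the dual frame, exactly as the paper does. The one place where you deviate is \eqref{eq:c43}: the paper invokes the minimal-$\ell^2$-coefficient property of the canonical dual (Duffin--Schaeffer), observing that $k_{x'}=1\cdot k_{x'}$ is itself a frame expansion so $\sum_x\abs{\inprod{k_{x'},g_x}}^2\le 1$, whereas you derive the quadratic inequality $t\ge t^2$ for $t=\inprod{k_x,g_x}=\inprod{Sg_x,g_x}\ge\abs{g_x(x)}^2$. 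Your argument is a bit more self-contained (it does not appeal to the minimality result) and also exhibits that $\inprod{k_x,g_x}$ is a nonnegative real number; the paper's argument, on the other hand, gives the slightly stronger information that the full sequence $\bigl(\inprod{k_{x'},g_x}\bigr)_{x\in X}$ has $\ell^2$-norm at most $1$. For \eqref{eq:c44} the paper simply says ``a general fact about frames''; your derivation $\norm{g_x}^4\le\sum_{x'}\abs{\inprod{g_x,g_{x'}}}^2\le A^{-1}\norm{g_x}^2$ is a clean way to fill that in.
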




\begin{proof}
The inequality~\eqref{eq:c45} follows from 
 \begin{align*}
    \sum_{x \in X} k(x,y) \overline{g_x(y)} 
&= \sum_{x \in X}\inprod{k(x,\cdot),k(y,\cdot)}\inprod{k(y,\cdot),g_x}\\
&= \biginprod{\sum_{x \in X} \inprod{k(y,\cdot),g_x}\, k(x,\cdot),k(y,\cdot)}
= \inprod{P_V k(y,\cdot),k(y,\cdot)} \\
& \leq \| k(y,\cdot )\|^2 = k(y,y) \,.
  \end{align*}
The proof for frames  is the same (just omit  the projection).

Item \eqref{eq:c42}
 follows from 
\[
\sum_{x \in X}\abs{g_x(y)}^2=\sum_{x \in X}\abs{\inprod{g_x, k(y,\cdot)}}^2\leq C\norm{k(y, \cdot)}^2 = C k(y,y)\,,
\]
where $C$ is the upper frame bound for $\set{g_x \colon x \in X}$.

Item \eqref{eq:c43}
 is an immediate consequence of the minimal $\ell^2$-norms of the
 coefficients in the canonical  frame expansion~\cite{DS52}:
\[
k({x'},\cdot)=\sum_{x \in X}\inprod{k({x'},\cdot),g_x} k(x,\cdot )  =
1 \cdot   k({x'},\cdot) \quad \text{ for every } x' \in X\,, 
\]
so
\[
\abs{\inprod{k({x'},\cdot),g_{x'}}}^2 \leq \sum_{x \in
  X}\abs{\inprod{k({x'},\cdot),g_{x}}}^2 \leq 1\quad \text{ for every
} x' \in X\,. 
\]
Finally \eqref{eq:c44} is a general fact about frames. 
\end{proof}

\subsection{Proof of Theorem \ref{ncdschr}}
\label{sec:proofs-necess-dens}

It is easy to see that every set of interpolation for  $PW_\Lambda
(B_q)$ must be  separated and that every set of stable sampling for  $PW_\Lambda
(B_q)$ must be relatively separated.


\begin{proof}[(A) Proof of the  necessary density conditions  for interpolation]
Assume that $\set{k(x,\cdot) \colon x \in X}$ 
is a Riesz sequence  in $PW_\Lambda 
(B_q)$ with biorthogonal basis $\set{g_x \colon x \in X}$. 
  For every closed,  bounded interval $I=[\alpha,\beta] \in \br$
let $V$ be  the (finite-dimensional) subspace $V= V_I= \spann
  \set{k(x,\cdot) \colon x \in X \cap I}$ and $P_V$ be   the orthogonal projection
 from  $PW_\Lambda(A)$ onto $V$. 
  Then   $\set{P_V g_x \colon x \in X \cap I}\subseteq V$ is the biorthogonal  basis to
  $\set{k(x,\cdot) \colon x \in X \cap I}$ and $\|P_V g_x \| \leq \|g_x\|
  \leq C$ for all $x$.  By~\eqref{eq:c45} 
\[
\sum_{x \in X \cap I} k(x,y) \overline{P_V g_x (y)} \leq
k(y,y)
\, .
\]
We now integrate both sides over a suitably enlarged interval
$I_b=[\alpha-b, \beta + b]$ and obtain 
\begin{equation}
  \label{eq:ndi1}
  \int_{I_b} \sum_{x \in X \cap I} k(x,y) \overline{P_V g_x (y)} d y
  \leq \int_{I_b} k(y,y) dy \, .
\end{equation}
 The biorthogonality   $\inprod{  k(x,\cdot) ,P_V
  g_x}  = \inprod{  k(x,\cdot) , g_x}=1$ for  $x\in X\cap I$ implies
that 
\[
\int_{I_b} k(x,y) \overline{P_V g_x (y)} d x = 1- \int_{\br \setminus
  {I_b}} k(x,y) \overline{P_V g_x (y)}  d y \, .
\]
Now fix   $\epsilon >0$ and let $b=b_\epsilon $ be the weak localization
constant of Lemma~\ref{lem:conthap}. 
If  $x \in X
\cap I$ and $y \in \br \setminus I_b$,  then $\abs{x-y} >
b$. Therefore  Lemma~\ref{lem:conthap}  implies that 
\begin{equation}
  \bigabs{\int_{\br \setminus I_b} k(x,y) \overline{P_V g_x (y)}  d
    y}^2 \leq \| P_V k(x,\cdot )\| ^2  \int_{\br \setminus I_b}\abs{ k(x,y)}^2  d y  
\leq C^2 \epsilon^2 \,,
\end{equation}
and consequently
\[
\Big| \sum _{x\in X\cap I} \int_{I_b} k(x,y) \overline{P_V g_x (y)} d
y \Big| = \Big|  \sum _{x\in X\cap I} \Big( 1-  \int_{\br \setminus I_b} k(x,y) \overline{P_V g_x (y)} d
y\Big) \Big|\geq \# (X\cap I) \, (1- C \epsilon ) \,.
\]
Inserting this estimate in the  left-hand side of  (\ref{eq:ndi1})
yields the lower estimate
$$
\frac{1}{\abs I}\int_{I_b} k(y,y) dy \geq (1- C \epsilon) \frac{\# (X
  \cap I)}{\abs I} \, , 
$$
whereas   Lemma \ref{lem: specfunasymp} leads to the upper estimate
$$
\frac{1}{\abs I}\int_{I_b} k(y,y) dy \leq \frac{\abs{I_b}}{\abs I}
\Bigg[\frac{\abs {\Lambda^{1/2}}} {\pi} + C'\abs {I_b} ^{-1/2} \Bigg]
\, .
$$
Since $\abs{I_b}/\abs I = 1 + 2b/\abs{I}$, we  obtain 
$$
 \frac{\# (X \cap I)}{\abs I} \leq (1- C \epsilon) ^{-1}
 \Bigg[\frac{\abs {\Lambda^{1/2}}} {\pi} + \frac{C'}{\abs {I } ^{1/2}} \Bigg] 
$$

We take the supremum over all intervals with $\abs I =r$ and then the
limit  $r \to \infty$, this yields $D^+(X) \leq (1- C \epsilon) ^{-1}
 \tfrac{\abs {\Lambda^{1/2}}} {\pi}$.  As $\epsilon >0$ was arbitrary,
 we have proved
that $D^+(X) \leq \frac{\abs{\Lambda^{1/2}}}{\pi}$. 
\end{proof}

\begin{proof}[(B) Proof of the necessary density conditions for sampling]
We assume that \\  $\set{k(x,\cdot) \colon x \in X}$ is a frame for
$PW_\Lambda (B_q)$ with  canonical dual frame  
 $\set{ g_x \colon x \in X}$. 

Since we will use Lemma~\ref{lem:hap},  we first assume that the
spectral set $\Lambda $ is bounded. 

Let $\epsilon >0$ be given. 
This time we use Lemma~\ref{lem:RB-kernel} \eqref{eq:c41} to write
$k(y,y) = \sum_{x \in X} k(x, y) \overline{g_x(y)}$ for all $y\in \br
$ and use this expression to estimate the averaged kernel in Lemma~\ref{lem:
   specfunasymp}. By Lemma~\ref{lem:
   specfunasymp} there exists an 
 $r_0= r_0(\epsilon) $ such that for all intervals $I=[\alpha,\beta]$
 of length $r>r_0$ 
$$
 \Big(\frac{ \abs {\Lambda^{1/2}} } \pi-\epsilon \Big)\abs I \leq  \int_I
 k(y,y) dy \, .
$$ 
The combination of these facts leads to 
\begin{equation}\label{eq:31}
 \Big(\frac{ \abs {\Lambda^{1/2}} } \pi-\epsilon \Big)\abs I \leq  \int_I
 k(y,y) dy =\bigabs{\int_I\sum_{x \in X} k(x, y) \overline{g_x(y)}
   dy}\, . 
\end{equation}
 Let $b= b_\epsilon $ be
larger than both  constants $b_\epsilon$ from
Lemmas~\ref{lem:conthap}  and ~\ref{lem:hap} and set
$I_-=[\alpha+b,\beta-b]$ and  $I_+=[\alpha-b,\beta+b]$.  We partition
$X$ accordingly  and write
\begin{align*}
\sum_{x \in X} k(x, y) \overline{g_x(y)}=&\Big(\sum_{x \in X\cap I_{-}}+\sum_{x \in X \cap( \br\setminus I_+)}+\sum_{x \in X \cap(I_+ \setminus I_-)} \Big) k(x, y) \overline{g_x(y)}\\
 =& A_1(y)+A_2(y)+ A_3(y)
\end{align*}

\emph{Estimate of $\int _I A_2$.}  Note that $y\in I$ and $x \in X
\setminus I_+$ implies that $|x-y|>b$. Lemma~\ref{lem:hap} asserts
that $\sum _{x\in X \setminus I_+} |k(x,y)|^2 < \epsilon
^2$. Consequently, using also \eqref{eq:c42}, we obtain
\begin{align*}
  \bigabs{\int_I A_2(y)d y} &\leq \int_I \big(\sum_{x \in X \cap(
    \br\setminus I_+)}\abs{ k(x, y) }^2\big)^{1/2}\big( \sum_{x \in
    X}\abs{g_x(y)}^2\big)^{1/2} dy \\ 
&\leq C\epsilon \sup_{y \in \br} k(y,y)^{1/2} \abs I \, . 
\end{align*}
Since the diagonal of the kernel $k$ is uniformly bounded by
Corollary~\ref{cor:kbound}, the final estimate for $A_2$ is 
\begin{equation}
  \label{eq:a2}
\bigabs{  \int _I A_2(y) \, dy } \leq C_2 \epsilon |I| \, .
\end{equation}

\emph{Estimate of $\int _I A_3$.}
For the third term observe that 
\begin{align}
\int_I \abs{A_3(y)} d y 
&\leq \sum_{x \in X \cap( I_+\setminus I_-)} \int_\br \abs{ k(x, y) }\, \abs{g_x(y)}dy 
\leq  \sum_{x \in X \cap( I_+\setminus I_-)} \norm { k(x, \cdot) }
\norm{g_x}  \, . 
\end{align}
Since $X$ is relatively  separated with covering constant $n_0 = \max
_{c\in \br } \# (X \cap [c,c+1]) $, this sum contains at most  $(|I_+
\setminus I_-| +1) n_0 \leq   (2b+1) n_0 $ terms. Using the boundedness of the diagonal of $k$ from
Corollary~\ref{cor:kbound}  and of
the canonical dual frame, the final estimate for $A_3$ is
\begin{equation}
  \label{eq:a3}
\int_I \abs{A_3(y)} \, dy \leq  C C_{RK} (2b+1) n_0  = C_3 \, . 
  \end{equation}
Here $C_3$ is a constant depending on $\epsilon $ via $b = b_\epsilon
$, but $C_3$ is independent of $I$. 


\emph{Estimate of $\int _I A_1$.} 
Next  we estimate $\abs{\int_I A_1(y) dy}$. Since  $\int_I=
\int_\br-\int_{\br\setminus I}$ and $|\langle k(x,\cdot ), g_x\rangle
| \leq 1$ by Lemma~\ref{lem:RB-kernel}, we obtain 
\begin{align}
\bigabs{\int_I k(x, y) \overline{g_x(y) }dy} & \leq 
\bigabs{\int_\br k(x, y) \overline{g_x(y)} dy}
+\bigabs{\int_{\br\setminus I} k(x, y) \overline{g_x(y)}dy} \\
&\leq 1 + \Big( \int _{\br \setminus I} |k(x,y)|^2 \, dy \Big)^{1/2}
\|g_x\| \, .
  \end{align} 
From   $x \in X \cap I_-$ and $y \in \br\setminus I$ it follows that
$\abs{x-y} > b$, therefore  by Lemma~\ref{lem:conthap} a single term
contributing to $A_1$ is majorized by 
\[
\bigabs{\int_{ I} k(x, y) \overline{g_x(y)}dy} \leq 1 + C_1 \epsilon
\, .
\]
This estimate  implies
\begin{align} \label{eq:a1}
 \bigabs {\int_I A_1(y) dy} \leq \sum_{x \in X\cap I_{-}}\bigabs{\int_I k(x, y) \overline{g_x(y)} dy} 
\leq  (1+ C_1 \epsilon) \,  \# (X \cap I_-) \, . 
\end{align}
Combining the estimates for $A_1, A_2, A_3$,  we obtain
\begin{align*}
  \Big(\frac{\abs {\Lambda^{1/2}}}\pi-\epsilon\Big )\abs I 
&\leq \bigabs{\int_IA_1(y)    dy}+ \bigabs{\int_I A_2(y) dy}+ \bigabs{\int_I A_3(y) dy} \\
&\leq(1+ C_1 \epsilon) \#(X \cap I) + C_2 \epsilon \abs I + C_3 \, . 
  \end{align*}
Therefore 
\begin{equation}
  \label{eq:bdsampest}
\frac{\#(X \cap I)}{\abs I} \geq (1+C_1 \epsilon )^{-1} \Bigg(
\frac{\abs {\Lambda^{1/2}}}\pi-\epsilon - C_2 \epsilon  - \frac{C_3
}{ |I|} \Bigg) \, .
\end{equation}
Now take the infimum over $\abs I =r$ and let  $r$ tend to
$\infty$. Again, since  $\epsilon >0$ is  arbitrary, we conclude that 
  $D^-(X) \geq \frac{\abs {\Lambda^{1/2}}}{\pi}$. 

So far we have proved the necessary density condition $D^- (X) \geq
\frac{\abs {\Lambda^{1/2}}}{\pi}$ under the assumption that the
spectral set $\Lambda $ is bounded. 

Now let $\Lambda \subseteq \br ^+$ be an arbitrary set of finite
measure and assume that $X$ is a set of stable sampling for
$PW_\Lambda (B_q)$.  Let   $\Omega >0$. Then $\Lambda \cap [0,\Omega]
$ is bounded and the Paley-Wiener space $PW_{\Lambda \cap
  [0,\Omega]}(B_q)$ is a closed subspace of $PW_\Lambda (B_q)$.  In
particular, every set of stable sampling for $PW_\Lambda (B_q)$ is a
set of stable sampling for $PW_{\Lambda \cap [0,\Omega ]} (B_q)$. 
From the main part of the proof we conclude that 
$$
D^-(X) \geq \frac{|\Lambda ^{1/2} \cap [0,\Omega ^{1/2}]|
}{\pi } \, .
$$
Since $\Omega >0$ was arbitrary, we conclude that $D^-(X) \geq
|\Lambda ^{1/2}|/\pi $. Thus this necessary condition holds for
arbitrary spectral sets of finite measure. 
\end{proof}

\section{Localization and Cancellation Properties of the Reproducing Kernel}
\label{sec:loc-hap-proof}

In this section we prove the decisive   Lemmas~\ref{lem:conthap} and
\ref{lem:hap}. 

\begin{proof}[Proof of weak localization ---  Lemma~\ref{lem:conthap}]
Since $|k(x,y)| = |k(y,x)|$, we will  show that there exists a $b= b_\epsilon $ such that 
$\int_{\abs{x-y} >b}\abs{k(x,y)}^2 \, dx   <
\epsilon ^2$ for all $y\in \br $. 

  We distinguish several  cases.

\textbf{Case I: $\pmb{\abs y \leq a}$.} Since $|x-y| \geq |x| - |y|
\geq |x| -a$, it  suffices to show  that there is a constant $b$, such
that  $\int_{\abs{x} >b}\abs{k(x,y)}^2 dx < \epsilon$. 
We use a compactness argument for this case. 

We first  verify that $y \mapsto k(\cdot,y)$ is a \cont\ mapping from
$[-a,a]$ to $L^2(\br)$, so  the set  $\set{k(\cdot,y) \colon \abs y \leq
  a}$ is compact in $L^2(\br)$. Then by  the Kolmogorov-Riesz theorem
(e.g.,\cite{Holden10, Weil41}) 
there is a constant $b_\epsilon$ such that 
\[
\norm{k(\cdot,y)\, c_{\abs{\cdot} > b_\epsilon}} < \epsilon \text{
  for all   } y \in [-a,a] \, .
\] 
To  verify the continuity of $y \mapsto k(\cdot, y)$ we use the dual
characterization of the norm as follows:
\begin{align*}
  \norm{k(\cdot, y)-k(\cdot,y')}= &\sup \set{
    \abs{\inprod{k(\cdot,y)-k(\cdot, y'),f}} \colon f \in
    PW_\Lambda(B_q), \norm{f} \leq 1}\\ 
=& \sup \set{ \abs{{f(y)-f(y')}} \colon f \in PW_\Lambda(B_q), \norm{f} \leq 1} \,,
\end{align*}
Using the representation formula of Lemma~\ref{prop:charbw}, we obtain 
\begin{align*}
  \abs{f(y)-f(y')} = &\bigabs{\int_{\Lambda^{1/2}} \mF_{B_q} f (\omega)\cdot [\Phi(\omega,y)-\Phi(\omega,y')] d \omega} \\
\leq & \int_{\Lambda^{1/2}} \abs{\mF_{B_q} f (\omega)} \abs{\Phi(\omega,y)-\Phi(\omega,y')}  d\omega \,.
\end{align*}
\\
An argument of Stolz~\cite[Thm.6]{Stolz92} (see
also~\cite[3.1]{Teschl08}) asserts  that $\Phi$
is uniformly bounded on  $[-a,a] \times \br^+$. (Actually, the theorem
states  the  boundedness of $\Phi(\lambda,\cdot)$, but  the proof verifies
boundedness in the spectral variable as well.)  

Set $C_\Phi =\sup_{x \in \br, \lambda \in \br^+} \abs {\Phi(\lambda,
  x)}$ and 
choose $\epsilon >0$. Then there is a number $u>0$ such that $\abs {\Lambda^{1/2} \setminus [0,u]} < \big(\frac{\epsilon}{2 C_\Phi}\big)^2$. 
 Since  $\Phi$ is uniformly \cont\ on the closure of $[-a,a] \times
{\Lambda^{1/2}\cap [0,u]}$, we can choose $\delta >0$ such  that
$\abs{\Phi(\omega,y)-\Phi(\omega,y')}< \epsilon$ for all $\omega \in
{\Lambda^{1/2} \cap [0,u]}$ and $\abs{y-y'}< \delta $. Then 
\begin{align*}
 \int_{\Lambda^{1/2} \cap[0,u]} \abs{\mF_{B_q} f (\omega)}
 \abs{\Phi(\omega,y) &-\Phi(\omega,y')}  d\omega \leq \epsilon
 \norm{\mF_{B_q} f}_{L^1({\Lambda^{1/2}})} \\
& \leq \epsilon
 \,C_{\Lambda^{1/2}} \norm{\mF_{B_q}
   f}_{L^2({\Lambda^{1/2}})}=\epsilon \,C_{\Lambda^{1/2}} \norm{
   f}_{L^2(\br)}   \, ,
\end{align*}
 because  $L^2({\Lambda^{1/2}}, I_2 d\omega)\subseteq L^1({\Lambda^{1/2}},
I_2 d\omega)$ and  the spectral transform is unitary. 
On the other hand
\begin{align*}
\int_{\Lambda^{1/2} \setminus[0,u]} \abs{\mF_{B_q} f (\omega)} \abs{\Phi(\omega,y)-\Phi(\omega,y')}  d\omega &\leq 2 C_\Phi \int_{\Lambda^{1/2} \setminus[0,u]} \abs{\mF_{B_q} f (\omega)}   d\omega \\
&\leq  2 C_\Phi \norm{\mF_{B_q} f }_{L^2({\Lambda^{1/2}})} \big(\abs {\Lambda^{1/2} \setminus [0,u]}\big)^{1/2}\\ & \leq  \epsilon  \norm{\mF_{B_q} f } _{L^2({\Lambda^{1/2}})} \,,
\end{align*}
so we obtain for $|y-y'|<\delta $
\[
 \abs{f(y)-f(y')}< C \epsilon \norm{f} \,.
\]
Taking the supremum over all $f$ in the unit ball of $\PW [B_p] {\Lambda}$ we obtain the desired continuity of $y \mapsto k(y, \cdot)$.\\

\textbf{Case II: $\pmb{ y > a}$.} We split the integral into three 
parts as follows:
\begin{align*}
\int_{\abs{x-y} >b}\abs{k(x,y)}^2 dx &= \int_{\substack{\abs{x-y} >b \\
    \abs x \leq a}}\abs{k(x,y)}^2 dx + \int_{\substack{\abs{x-y} >b \\
     x > a}}\abs{k(x,y)}^2 dx \\
& +\int_{\substack{\abs{x-y} >b \\     x <- a}}\abs{k(x,y)}^2 dx  = A + B+C\,, 
  \end{align*}
and estimate each  integral 
separately.

To estimate $A$, 
it is sufficient to find a value $b_0$ large enough, such that
\begin{equation}
  \label{eq:c32}
 \int_{ {\abs x \leq a}}\abs{k(x,y)}^2 dx < \epsilon \qquad \text{for
   all}\quad \abs y \geq  b_0\,.  
\end{equation}
By a straightforward calculation 
\begin{align*}
  \int_{ {\abs x \leq a}}\abs{k(x,y)}^2 dx\ =&
  \int_{ {\abs x \leq a}}\Biggl( \int_{\Lambda^{1/2}} \Phi(\omega,x)\cdot \overline{\Phi(\omega,y) }d \omega \,\,  \overline{\int_{\Lambda^{1/2}} \Phi(\mu,x)\cdot \overline{\Phi(\mu,y) }d \mu} \Biggr)\,  dx\ \\
=& \sum_{i,k=1}^2 \iint_{{\Lambda^{1/2}} \times {\Lambda^{1/2}}}\Biggl(\int_{\abs x \leq a} \Phi_i(\omega,x) \overline{\Phi_k(\mu,x)} \, dx \Biggr)\;\cdot\; \overline{\Phi_i(\omega,y)} \Phi_k(\mu,y) \,d\mu \, d\omega\\
=& \sum_{i,k=1}^2 \iint_{{\Lambda^{1/2}} \times
  {\Lambda^{1/2}}}\Psi_{i,k}(\omega,\mu)\;\cdot\;
\overline{\Phi_i(\omega,y)} \Phi_k(\mu,y) \,d\mu \, d\omega \,. 
\end{align*}
Here  the functions $\Psi_{i,k}$  are \cont\ in $\omega $ and $\mu $.  
By ~\eqref{eq:fundsols} for $|y| \geq a$, the products $ \overline{\Phi_i(\omega,y)} \Phi_k(\mu,y)$ are  of the form 
\begin{equation}
\alpha_{ik}(\omega,\mu) e^{\pm i(\omega-\mu)y}+\beta_{ik}(\omega,\mu) e^{\pm i(\omega+\mu)y} \label{eq:18}
\end{equation}
with smooth coefficients $\alpha _{ik}, \beta _{ik}$. 
Consequently, we may interpret the map $y \to \int _{|x| \leq a}
|k(x,y)|^2 \, dy$ as a sum of two-dimensional Fourier transforms  of
continuous functions on $\Lambda^{1/2 } \times {\Lambda^{1/2} }$. By
the Riemann-Lebesgue Lemma  $\lim _{y\to \infty } \int _{|x| \leq a}
|k(x,y)|^2 \, dy  = 0$ and \eqref{eq:c32} is proved.


To estimate the term $B$, 
we first   obtain an  explicit expression  for $k(x,y)$ in terms of
the scattering coefficients. Since  $x,y > a$, the scattering
relations  \eqref{eq:fundsols} yield  
\begin{align*}
  \Phi(\omega,x) \cdot \overline{\Phi(\omega,y)}=&
e^{i \omega (x-y)}\Bigl( 
\underset{=1}{\underbrace{\abs{T(\omega)}^2+\abs{R_2(\omega)}^2}}
+e^{- 2 i \omega (x-y)} + R_2(\omega) e^{2 i \omega y} + \overline{R_2(\omega)} e^{- 2 i \omega x}\Bigr)\\
=& e^{i \omega (x-y)} + e^{-i \omega (x-y)} +R_2(\omega)e^{i \omega
  (x+y)}+\overline{R_2(\omega)}e^{-i \omega (x+y)} \, . 
\end{align*}
After integrating the last expression over ${\Lambda^{1/2} }$, we obtain
\begin{equation}\label{eq:specfunas}
k(x,y) 
= \mF(c_{\Lambda^{1/2}})(y-x)+\mF(c_{\Lambda^{1/2}})(x-y)+ \mF(R_2
\,c_{\Lambda^{1/2}})(-x-y)+\mF(\overline R_2\, c_{\Lambda^{1/2}})(x+y)
\, . 
\end{equation}
Since $c_{\Lambda^{1/2} }$ and $R_2c_{\Lambda^{1/2} }$ are in
$L^2(\br )$, so are their Fourier transforms. Thus there exists a
constant $b_1$, such that 
$$
\int _{|z| > b_1} \Big( |\mF c_{\Lambda^{1/2} } (z) |^2 +  |\mF (R_2
c_{\Lambda^{1/2} }) (z) |^2 \Big) \, dz < \epsilon ^2 \, .
$$
Consequently for $x+y > |y-x| \geq b_1$ and $|x| \geq a$,   we obtain 
\[
B= \int_{\abs{y-x}> b_1, |x| >a}\abs{k(x,y)}^2 dx <\epsilon ^2 \, .
\]
To estimate $C$, where   $x< -a $ and $y>a$,  we use  again  the unitarity  of the
scattering matrix and obtain 
\begin{equation}\label{eq:5}
   \Phi(\omega,x) \cdot \overline{\Phi(\omega,y)}=
e^{i \omega(x-y)}\overline{T(\omega)} +e^{-i \omega(x-y)}{T(\omega)}+
e^{-i \omega(x+y)}
\underset{=0}{\underbrace{
\Bigl(R_1(\omega)\overline{T(\omega)} +T(\omega)\overline{R_2(\omega)} \Bigr)
}} \, .
\end{equation}
Thus the kernel is of the form 
\begin{equation}
  \label{eq:c47}
k(x,y) = \mF(\overline{T} c_{\Lambda^{1/2}}) (y-x) + \mF( T
c_{\Lambda^{1/2}}) (x-y) \, ,
\end{equation}
and again there exists a $b_2$ such that $\int _{|y-x| \geq b_2}
|k(x,y)|^2 \, dx < \epsilon ^2$. 

By combination of these cases and adjusting the choice of $b$, $b=
\max (b_0,b_1,b_2)$,  the
statement follows when $y>a$. 

 \textbf{Case III: $\pmb{ y < - a}$.} This case is treated in complete
 analogy to the case $y>a$ by using the remaining scattering relations
 \eqref{eq:fundsols}. 
\end{proof}

\begin{rem} \label{rem:alert}
  At this point we must alert the reader to the miracle happening
  in~\eqref{eq:5}.   The unitarity of the scattering matrix implies
  that the coefficient of $e^{i \omega (x+y)}$ vanishes. If this were
  not the case, we would have no control over the size of \\ $\int
  _{|x-y|\geq b_2} |k(x,y)|^2 \, dx$, and the whole proof would
  break down. It is this seemingly little detail that made us favor  the
  Schr\"odinger picture  over the Sturm-Liouville picture. 
\end{rem}

\begin{proof}[Proof of the homogenous approximation property ---  Lemma~\ref{lem:hap}]
We must show that there exists $b >0$  such that 
$\sum _{x\in X: \abs{x-y} >b}\abs{k(x,y)}^2    <
\epsilon ^2$ for all $y\in \br $. 
This is the discrete analogue of Lemma~\ref{lem:conthap}, and its
proof is  roughly parallel to the one of Lemma~\ref{lem:conthap}. 

\textbf{Case I: $\pmb{\abs y \leq a}$.}  As $X$ is a set of stable sampling,
the mapping $ f \mapsto \big(f(x)\big)_{x \in X}$ is \cont\ from $\PW
[B_q]\Lambda$ to $\ell^2(X)$, and thus maps compact sets in
$PW_\Lambda (B_q)$ to compact sets in $\ell ^2(X)$. Applying this
remark to the compact set $\{ k(\cdot , y) : |y|\leq a\}$ (as shown in
the 
proof of  Lemma~\ref{lem:conthap}, Case I), we see that
the set of samples $\{ (k(x,y)_{x\in X} : |y|\leq a\}$ is compact in
$\ell ^2(X)$.


The version of the Kolmogorov-Riesz theorem  for sequences  implies
that for every  $\epsilon>0$ there exists a number $b= b_\epsilon $ such that 
\[
\sup _{|y| \leq a} \sum_{\substack{x \in X\\\abs{x} > b}}\abs{k(x,y)}^2  < \epsilon \,,
\]
as claimed. 

\textbf{Case II: $\pmb{ y > a}$.} 

 We split the sum  into three 
parts: 
\[
\sum _{x\in X: \abs{x-y} >b}\abs{k(x,y)}^2 = \sum _{\substack{\abs{x-y} >b \\
    \abs x \leq a}}\abs{k(x,y)}^2  + \sum_{\substack{\abs{x-y} >b \\
     x > a}}\abs{k(x,y)}^2  +\sum_{\substack{\abs{x-y} >b \\
     x <- a}}\abs{k(x,y)}^2   = A + B+C\, . 
\]

\emph{Estimate of $A$.} 
We seek  $b_0$ large enough, such that
\[
\sum_{\substack{x \in X\\\abs{x} \leq a}}\abs{k(x,y)}^2 < \epsilon \qquad \text{for  } \quad \abs y > b_0> a
\]
  As in the proof of the parallel case of Lemma \ref{lem:conthap} we obtain
  \begin{align*}
 A & =  \sum_{\substack {x \in X\\ \abs x \leq a}}\abs{k(x,y)}^2 dx =
\sum_{i,k=1}^2 \iint_{{\Lambda^{1/2}} \times
  {\Lambda^{1/2}}}\Big(\sum_{\substack {x \in X\\\abs x \leq a}}
\Phi_i(\omega,x) \overline{\Phi_k(\mu,x)} \Big)\;
\overline{\Phi_i(\omega,y)} \Phi_k(\mu,y) \,d\mu \, d\omega \\
&= \sum_{i,k=1}^2 \iint_{{\Lambda^{1/2}} \times
  {\Lambda^{1/2}}} \Psi _{ik}(\omega,\mu ) \;
\overline{\Phi_i(\omega,y)} \Phi_k(\mu,y) \,d\mu \, d\omega \, ,
  \end{align*}
where we denote the inner sum by  $\Psi _{ik}(\omega,\mu ) = \sum_{\substack {x \in X\\\abs x \leq a}}
\Phi_i(\omega,x) \overline{\Phi_k(\mu,x)} $. Since $X$ is relatively
separated, $\# (X\cap [-a,a]) $ is finite. Furthermore, the set of 
eigenfunctions is uniformly bounded on the compact set $\Lambda ^{1/2}
\times [-a,a]$, therefore $\Psi _{ik}$ is continuous and bounded on
$\Lambda ^{1/2} \times \Lambda ^{1/2} $. 

Next, as in the proof of Lemma~\ref{lem:conthap}, the mapping $y\mapsto
\sum _{x\in X: |x| \leq a} |k(x,y)|^2$ is the two-dimensional Fourier
transform of a bounded continuous function, and by the
Riemann-Lebesgue Lemma we obtain $\lim _{y\to \infty } 
\sum _{x\in X: |x| \leq a} |k(x,y)|^2 = 0$. Thus $|A| < \epsilon $ for
all 
$y$ sufficiently   large.

\emph{Estimate of $B$ and $C$.} For the estimate of $B$ and $C$ we use
the formulas for the kernel  \eqref{eq:specfunas} (for $x>a$) and
\eqref{eq:c47} (for $x< -a$). In both cases the kernel is a sum of
Fourier transform of the scattering coefficients restricted to
$\Lambda ^{1/2}$, i.e., of $c_{\Lambda ^{1/2}}, Tc_{\Lambda ^{1/2}},
R_1 c_{\Lambda ^{1/2}}$, and $R_2c_{\Lambda ^{1/2}}$.  
Since $\Lambda ^{1/2}$ is assumed to be bounded, the function $x
\mapsto k(x,y)$ is thus the restriction of a classical \bdl\ function
to one of the intervals $[a,\infty )$ or $(-\infty ,-a]$. 

We use a
local version of  the Plancherel-Polya-Theorem from
~\cite[Lemma~1]{Gr96landau}. We set $f^\sharp (x) = \sup _{|y-x|\leq
  1} |f(y)|$ and note that if $f\in \Ltwo $ is \bdl\ with $\supp \hat{f}
\subseteq [\Omega , \Omega ]$, say, then $f^\sharp \in \Ltwo $. If $X$
is separated with separation $\min _{x,x'\in X} |x-x'| \geq 1$, then 
\begin{equation}
  \label{eq:c56}
\sum _{x\in X, |x| \geq b} |f (x)|^2\leq \int _{|x| \geq b-1} |f^\sharp (x)|^2
\, dx \, .
\end{equation}
If $X$ is relatively separated, then this inequality holds with the
constant $n_0 = \max 
_{c\in \br } \# (X \cap [c,c+1]) $ on the right hand side. We now
apply ~\eqref{eq:c56} to the functions $x \mapsto \mF c  _{\Lambda
  ^{1/2}} (x-y)$ and $x \mapsto \mF (R_2 c  _{\Lambda
  ^{1/2}}) (x+y)$ and obtain 
$$
 \sum_{\substack{x\in X, \abs{x-y} >b  \\     x > a}}\abs{\mF c  _{\Lambda
     ^{1/2}} (x-y) }^2  \leq n_0
 \int _{|x-y| \geq b-1} |(\mF c  _{\Lambda
  ^{1/2}})^\sharp  (x-y)|^2 \, dx =  n_0
 \int _{|z| \geq b-1} |(\mF c  _{\Lambda
  ^{1/2}})^\sharp  (z)|^2 \, dz \, .
$$
and for $y>a$ also
$$
 \sum_{\substack{x\in X: \abs{x-y} >b   \\   x > a}}\abs{\mF (R_2 c 
   _{\Lambda ^{1/2}})(x-y)}^2  \leq n_0
 \int _{|z | \geq b-1} \abs{\mF (R_2 c 
   _{\Lambda ^{1/2}})^\sharp (z)}^2 \, dz \, .
$$
Consequently, 
$$
B= \sum_{\substack{\abs{x-y} >b \\
     x > a}}\abs{k(x,y)}^2  \leq 4 n_0   \int _{|z| \geq b-1} \Big( |(\mF c  _{\Lambda
  ^{1/2}})^\sharp  (z)|^2 + |(\mF  (R_2 c  _{\Lambda
  ^{1/2}})^\sharp  (z)|^2\Big) \, dz < \epsilon \, ,
$$
for  $b$
large enough. 
Likewise, for large $b$
$$
C = \sum_{\substack{\abs{x-y} >b \\
     x < -a}}\abs{k(x,y)}^2  \leq 4 n_0  \int _{|z| \geq b-1}  |(\mF (T c  _{\Lambda
  ^{1/2}})^\sharp  (z)|^2  \, dz < \epsilon
$$

\textbf{Case III: $\pmb{ y <- a}$.}
This case is symmetric to Case II and settled with the same argument. 
\end{proof}

\section{Summary and  Outlook }

In this work we have argued that the spectral subspaces of a
Sturm-Liouville operator $f \mapsto - (pf')'$ on $L^2(\br )$  with a
positive parametrizing function $p$ 
may serve as
a model for functions of variable bandwidth.  
 Our results strongly
support the intuition that the quantity $p(x)^{-1/2}$ is a measure for
the local \bw\ of such a function. This intuition is backed up by sampling
theorems (with only minimal assumptions on $p$), and by necessary
density conditions for sampling and for interpolation (for the model
case of eventually constant $p$). 

Clearly  the project of variable \bw\ has a much bigger scope than can
be treated in a single paper. The notion of variable \bw\ requires
much more and deeper investigations and obviously raises a multitude  of new
questions both in  sampling theory and also  about the fine spectral
properties  of Sturm-Liouville operators. Let us sketch a few
directions (some of which we plan to address in subsequent
publications, and some of which we have no concrete ideas about). 

(a) Clearly the model case of an eventually constant \bw\
parametrization $p$ is quite restrictive. It seems that a version of  Theorem~\ref{ncdschr}
can be proved under the assumption that $p$ is \emph{asymptotically
  constant}, i.e.,   $|p(x)^{-1} - p_{\pm}^{-1}| = \mO (|x|^{-\alpha })$ as $x\to
\pm \infty  $ for some
$\alpha >1$. This case, however, requires much more spectral theory of
Sturm Liouville operators, which according to~\cite{Weidmann03} is ``decidedly
more complicated''.

(b) In view of the classical results of Beurling~\cite{beur89} one may
conjecture that the density condition $D_p^-(X) > \Omega ^{1/2} / \pi
$ is sufficient for $X$ to be a set of sampling for $PW_{[0,\Omega
  ]}(A_p)$, at least for reasonable $p$. At this time it is not clear
how to replace   the maximum gap condition in Theorem~\ref{maxgap}  by the
average density of Beurling, because the functions in $PW_{[0,\Omega
  ]}(A_p)$ are no longer entire. 

(c) In the special case $p(x) = p_-$ for $x\leq 0$ and $p(x) = p_+$
for $x>0$ we have found a set of sampling and interpolation,
equivalently, an orthonormal basis of reproducing kernels. Is there a
Riesz basis of reproducing kernels in $PW_\Lambda (A_p)$ for more
general parametrizing functions $p$? This problem is hard even for $p
\equiv 1$ and disconnected spectral sets $\Lambda $. See~\cite{KN15} for a
recent breakthrough.

(d) Spectral perturbation theory: How are the Paley-Wiener spaces
$PW_\Lambda (A_{p_1})$ and $PW_\Lambda (A_{p_2})$ related when $p_1$
and $p_2$ are close in some sense? 

(e) Clearly all questions may be treated in the multivariate setting
by considering a strongly elliptic second order differential operator
$f \to - \nabla (M \nabla f)$ for some matrix-valued function $x\to
M(x)$. In this case only  the 
existence of frames of reproducing kernels is known  from the general  work of
Pesenson and Zayed~\cite{Pes01,Pesenson09} (by sampling densely enough), but all quantitative
questions about necessary and sufficient conditions for sampling 
are wide open.  Likewise, the connection of the spectral subspaces to
a local \bw\ is far from transparent. In higher dimensions we expect
the geometry associated to elliptic differential operators to play a
more visible and prominent role. In Sections~5 and~6  the explicit metric
$d(y,z) = | \int _y ^z p(x)^{-1/2} dx|$ played an important role, in
higher dimensions  analogous metrics are known as
Carnot-Carath\'eodory metrics, see, e.g., ~\cite{Nagel86,Nagel03}. We expect these to appear  in the
correct definition of a Beurling density and in the formulation of
sampling results. 

(f) Computational aspects: the ultimate goal would be the use of
variable \bw\ for adaptive signal reconstruction. The idea is choose
the  local \bw\  according to the local sampling density and then
reconstruct a function in a space of variable \bw . 
  Given a nonuniform
sampling set $X = \{x_j\}$ and samples $y_j = f(x_j)$, we would like
to proceed as follows: (i)  find  a
\bw\ parametrizing function $p$ such that the maximum gap
condition~\eqref{eq:48} is satisfied. (ii) Construct a function in
$PW_{[0,\Omega ]}(A_p)$ from the data $(X, f(X))$ by means of an
algorithm based on Theorem~\ref{thm:rec-alg}. This may seem hopeless
for general parametrizing functions $p$, because any procedure
requires the knowledge of the reproducing kernel.  The discussion of
Section~4 shows that at least for 
piecewise constant $p$ the sampling theory can be made more
explicit. Therefore this 
idea carries some potential for the numerical realization.


 \appendix %

 \section{Computation of the spectral measure}
 \label{sec:comp-spect-meas}

\subsection*{Spectral measure}
\label{sec:spectral-measure}

For the explicit construction of the spectral measure $\rho$ assume
$\tau$ is  LP  at $\pm \infty$, and denote   the unique
solution of  
$(\tau-z)\phi=0$,  $z \notin  \br$,
that lie left (right) in $L^2(\br)$ by  $\phi_-(z,\cdot )$ (by $ \phi_+(z,
\cdot )$). 
Then the resolvent  $R_z(A)= \inv{(A-z)}$  of the \saj\ realization
$A$ of $\tau$ is the  integral operator
\begin{equation}
  \label{eq:41}
  R_z(A) g(x) 
= \frac{1}{W_p(\phi_+,\phi_-)(z)} 
 \Big( \phi_+(z,x)\int_{-\infty}^x \phi_-(z,u) g(u) \,du + \phi_-(z,x)\int_x^\infty \phi_+(z,u) g(u) \, du \Big) \,,
\end{equation}
where $W_p(f,g)(z)= (pf'\, g - f\, pg')(z,x)$ is the generalized
Wronski determinant. Note that $W_p(\phi _+,\phi _-)$ is independent
of $x$  for the  solutions of $(\tau-z)\phi=0$
~\cite[13.21]{Weidmann03},\cite[Eq.\ (9.6)]{Teschl09}.
 
 Assume that  the components of $\Phi(z,\cdot)$ form a fundamental
 system of solutions of $(\tau-z)\phi=0$ that depend  \cont ly on $z$
 in a complex neighborhood $Q$ of the interval $(\alpha,\beta)
 \subseteq \br$. Then  there exist  $2 \times 2$ matrices $m^{\pm}(z)$
 for $z \in Q \cap (\bc \setminus \sigma(A))$, such that the integral kernel
 $r_z$ of the resolvent $R_z(A)$ can be written as 
\begin{equation}
   \label{eq:43}
   r_z(x,y)=
   \begin{cases}
    \overline{\Phi(\bar z,x)}\cdot  m^+(z)\Phi(z,y) \,,\quad &y \leq x \,, \\
      \overline{\Phi(\bar z,x)}\cdot  m^-(z)\Phi(z,y) \,,\quad &y > x \, .
   \end{cases}
 \end{equation}
 For an interval $ (\gamma,\lambda] \subseteq (\alpha,\beta)$  the spectral measure is given by the\emph{
   Weyl-Titchmarsh-Kodaira formula} 
\begin{equation}
  \label{eq:44}
  \rho\big( (\gamma,\lambda]\big)= \frac{1}{2 \pi i} \lim_{\delta
    \searrow 0}\lim_{\epsilon \searrow 0}
  \int_{\gamma+\delta}^{\lambda+\delta} \big( m^\pm(t+ i \epsilon)
  -m^\pm(t- i \epsilon) \big) dt\, . 
\end{equation}
See e.g \cite[14.5]{Weidmann03},\cite[XII, 5.18]{DS2} for a proof.

We now  compute the spectral measure for $A_p$ studied in Section \ref{sec:model-case}.
 
 Set $\phi_1=\phi_+$, $\phi_2=\phi_-$ and
\begin{align*}
  a_1=\frac{1}{2} \Big(1+ \sqrt{\frac{p_+}{p_-}}\Big),\quad & b_1=\frac{1}{2} \Big(1- \sqrt{\frac{p_+}{p_-}}\Big)\\
  a_2=\frac{1}{2} \Big(1+ \sqrt{\frac{p_-}{p_+}}\Big),\quad &
  b_2=\frac{1}{2} \Big(1- \sqrt{\frac{p_-}{p_+}}\Big) \, .
\end{align*}
 Then 
\[
W_p(\phi_1,\phi_1)=- i \sqrt z (\pp+\pmi) \, .
\]
For $ y \leq x$ the resolvent kernel can be written as
\begin{align*}
    r_z (x,y) &=\frac{1}{W_p(\phi_1,\phi_2)}\phi_1(z,x) \phi_2(z,y) \\
              &=\sum_{j,k=1}^2m_{jk}^+\overline{\phi_j(\bar z,x))}\phi_k(z,y)
\end{align*}
If $x>0$,
\[
\phi_1(z,x)= \frac 1 {\overline{a_2}}\Big( \overline {\phi_2(\bar z,x)} - \overline{b_2} \,\overline{\phi_1(\bar z,x)} \Big)\,, 
\]
so
\[
r_z(x,y)=\frac{1}{W_p(\phi_1,\phi_2)\,\overline{a_2}}
\Big(
- \overline{b_2} \,\overline{\phi_1(\bar z,x)}  \phi_2(z,y)+ \overline {\phi_2(\bar z,x)} \phi_2(z,y)
\Big)
\]
which yields for the matrix $m^+$
\[
m^+(z)=\frac{1}{W_p(\phi_1,\phi_2)\,\overline{a_2}}
\begin{pmatrix}
  0 & * \\
0  & 1
\end{pmatrix}
\]
By a similar calculation for $x < 0$,
\[
m^+(\bar z)=\frac{1}{\overline{W_p(\phi_1,\phi_2)}\,{a_1}}
\begin{pmatrix}
  1 & ** \\
0  & 0
\end{pmatrix} \, ,
\]
and so
\[
m^+(\lambda+ 0 i)-m^+(\lambda - 0 i)= \frac{i}{\sqrt\lambda (\pp +\pmi)^2}
\begin{pmatrix}
  2 \pmi & *** \\
0  &  2 \pp
\end{pmatrix} \, .
\]

\section{Time Warping}
\label{sec:time-warping}
We discuss briefly how time-warping is related to our approach with
spectral subspaces. Let $p$ be a parametrizing function with $0<c \leq
p(x) \leq C <\infty $ and consider the differential expression $f \to
-i p f'$. By choosing the correct measure on $\br $ and an appropriate
domain, we obtain the self-adjoint operator
\[
B_p = -i p \tfrac{d}{dx} \,,\quad \mD(B_p)=\set{ f \in L^2(\br, \frac{dx}{p(x)})
  \colon B_p f \in  L^2(\br , \frac{dx}{p(x)})} 
\]
on  $L^2(\br, \frac{dx}{p(x)})$ with corresponding spectral
projections $c_\Lambda (B_p)$ for $\Lambda \subseteq \br $. Thus a
function $f\in L^2(\br, dx/p(x))$ is $B_p$-\bdl\ with spectral set
$\Lambda $, in short $f\in PW_\Lambda (B_p)$,  if 
$ f= c_\Lambda (B_p) f  $. 
In this case \bdl\ functions possess the following explicit
description.

\begin{prop}
  Set $\eta (x) = \int _0 ^x \tfrac{1}{p(t)}\, dt $. Then $f\in PW
  _\Lambda (B_p)$, \fif\ there exists $F\in L^2(\br )$ with
  $\mathrm{supp}\, F \subseteq \Lambda $, such that 
  \begin{equation}
    \label{eq:c33}
    f(x) = \int _\Lambda F(\lambda ) e^{i\lambda \eta ^{-1}(x)} \,
    d\lambda = \big(\mF^{-1} F\big) (\eta ^{-1}(x)) \, .
  \end{equation}
Thus $f$ is obtained from the \bdl\ function $\mF^{-1} F$ by
time-warping with $\inv \eta $.  
\end{prop}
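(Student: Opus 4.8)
The plan is to diagonalise $B_p$ by an explicit time change and then read off $PW_\Lambda(B_p)$ on the Fourier side, exactly as one reads off the classical Paley--Wiener space from $-i\,d/dx$. Since $0<c\le p\le C$, the function $\eta$ has $1/C\le\eta'(x)=1/p(x)\le 1/c$, so $\eta$ is a strictly increasing bi-Lipschitz homeomorphism of $\br$ onto $\br$, with $\eta(\pm\infty)=\pm\infty$, and both $\eta$ and $\eta^{-1}$ are locally absolutely continuous. Because $\eta$ pushes the measure $\tfrac{dx}{p(x)}=\eta'(x)\,dx$ forward to Lebesgue measure, the substitution $s=\eta(x)$ shows that the composition operator $Vf:=f\circ\eta^{-1}$ is an isometry, and it is onto with $V^{-1}h=h\circ\eta$; hence $V$ is unitary from $L^2\bigl(\br,\tfrac{dx}{p(x)}\bigr)$ onto $L^2(\br)$.

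Next I would verify the intertwining $VB_pV^{-1}=-i\,\tfrac{d}{ds}$. For locally absolutely continuous $h$ the chain rule gives $(V^{-1}h)'(x)=h'(\eta(x))\eta'(x)=h'(\eta(x))/p(x)$, so
\[
B_p(V^{-1}h)(x)=-i\,p(x)\,(V^{-1}h)'(x)=-i\,h'(\eta(x))=\bigl(V^{-1}(-i\,h')\bigr)(x).
\]
Since $\eta,\eta^{-1}$ are bi-Lipschitz they preserve local absolute continuity, so $V$ carries an $f\in AC_{loc}$ with $f,\,B_pf\in L^2(\br,\tfrac{dx}{p})$ to a $Vf\in AC_{loc}$ with $Vf,\,(Vf)'\in L^2(\br)$, i.e.\ it maps $\mathcal D(B_p)$ onto $\mathcal D(-i\,d/ds)=H^1(\br)$; this both reproves that $B_p$ is self-adjoint and gives $VB_pV^{-1}=-i\,d/ds$. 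By uniqueness of the Borel functional calculus for unitarily equivalent self-adjoint operators, $g(B_p)=V^{-1}g(-i\,d/ds)V$ for every bounded Borel $g$, and in particular $c_\Lambda(B_p)=V^{-1}c_\Lambda(-i\,d/ds)V$.

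Finally I would transfer to the Fourier picture. The momentum operator $-i\,d/ds$ is diagonalised by $\mF$ (its generalised eigenfunction for the eigenvalue $\lambda$ is $e^{i\lambda s}$), so $c_\Lambda(-i\,d/ds)=\mF^{-1}M_{c_\Lambda}\mF$ and $PW_\Lambda(-i\,d/ds)=\{\mF^{-1}F:\ F\in L^2(\br),\ \supp F\subseteq\Lambda\}$. Consequently $f\in PW_\Lambda(B_p)$ iff $c_\Lambda(B_p)f=f$, iff $c_\Lambda(-i\,d/ds)(Vf)=Vf$, iff $Vf=\mF^{-1}F$ for some $F\in L^2(\br)$ supported in $\Lambda$; unwinding $Vf=f\circ\eta^{-1}$ yields the representation \eqref{eq:c33}. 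When $\abs\Lambda<\infty$ such an $F$ lies in $L^1\cap L^2$, so $\mF^{-1}F$ is a bounded continuous bandlimited function and the pointwise formula is legitimate.

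This argument is essentially soft; the only step deserving care is the domain bookkeeping in $VB_pV^{-1}=-i\,d/ds$, i.e.\ checking that the time change respects the maximal domains on both sides, and this is immediate from the bi-Lipschitz property of $\eta$.
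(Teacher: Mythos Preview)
Your proof is correct and follows essentially the same route as the paper: both conjugate $B_p$ by composition with $\eta^{-1}$ to reduce to the standard momentum operator $-i\,d/ds$ and its Fourier diagonalisation (the paper simply packages your $V$ and $\mF$ together into a single spectral transform $U_p=\mF V$, but the content is identical, and your domain bookkeeping is if anything more explicit). One remark: unwinding $Vf=f\circ\eta^{-1}=\mF^{-1}F$ actually gives $f(x)=(\mF^{-1}F)(\eta(x))$, not $(\mF^{-1}F)(\eta^{-1}(x))$ as printed in \eqref{eq:c33}; the paper's own proof arrives at the same corrected formula with $\eta(x)$, so this is a typo in the statement rather than a gap in your argument.
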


\begin{proof}
  The proof follows easily, once we  have identified the spectral
  measure and diagonalized  $B_p$. The eigenfunctions $ - i p(x)
  \frac{d}{dx}\Phi(\lambda,x) = \lambda \Phi(\lambda,x)$ are given explicitly as 
\[
  \Phi(\lambda,x)=e^{i \lambda \int_0^x \frac {dt}{ p (t)} } = e^{i
    \lambda \eta(x)} \, , 
\]
and the corresponding spectral transform ist given by 
$$
 U_p f(\lambda)
=\int_\br f(x) \overline{\Phi(\lambda,x)} \tfrac{dx}{p(x)} = \int_\br
f(x) e^{-i \lambda \eta (x)}  \tfrac{dx}{p(x)}   \, .
$$
Using the substitution $y=\eta (x), dy = \eta'(x) dx =
\frac{dx}{p(x)}$, we find that 
$$
U_pf(x) = \int_\br
f(\eta ^{-1}(y))  e^{-i \lambda y } \, dy  = \mF( f \circ \eta ^{-1})
(\lambda ) \, .
$$
It is now easy to verify that $U_p$ is unitary from $L^2(\br,
\frac{dx}{p(x)})$  onto $\Ltwo $ and that $U_p$ diagonalizes $B_p$,
i.e., $ U_p B_p U_p ^* F(\lambda ) = \lambda F(\lambda )$.  
 The inverse $U_p ^{-1} = U_p^* : \Ltwo \to L^2(\br, \frac{dx}{p(x)})$ 
 is then given by 
$$
U_p^* F(x)  = \int _{\br } F(\lambda ) \Phi (\lambda , x) \, d\lambda
=  \int _{\br } F(\lambda ) e^{i\lambda \eta (x)} \, d\lambda \, .
$$
or 
$ U_p^* F =  (\mF^{-1} F) \circ \eta $. 
The spectral projection of $\Lambda $ is then $c_\Lambda (B_p)f  =
U^*_p c_\Lambda  U_p f $. Consequently, every $f\in PW_\Lambda (B_p)$
is given by 
$$
f(x)  =  \int _{\Lambda  } F(\lambda ) e^{i\lambda \eta (x)} \,
d\lambda  = (\mF^{-1} F)( \eta (x)) \, 
$$ for some $F \in L^2(\Lambda )$, as claimed.  
\end{proof}

\def\cprime{$'$} \def\cprime{$'$} \def\cprime{$'$}


\end{document}